\newcommand{\parenthesis}[1]{\left(#1\right)} % round bracket
\newcommand{\braces}[1]{\left\{#1\right\}} % curly bracket
\newcommand{\R}{\mathbb{R}}
\newcommand{\dd}{\mathop{}\!\mathrm{d}}
\newcommand{\eps}{\varepsilon}
\newcommand{\p}{\partial}
\DeclareMathOperator{\dist}{dist}
\def\N{\mathbb{N}}
\def\R{\mathbb{R}}
\def\m1{{I\!\!M}}
\def\ee{\`e}
\def\aa{\`a}
\renewcommand{\to}{\rightarrow}
\newcommand{\pa}{\partial}
\newcommand{\ov}[1]{\overline{#1}}
\newcommand{\un}[1]{\underline{#1}}
\newcommand{\intbar}{\mathop{\int\makebox(-15.5,0){\rule[6pt]{.7em}{0.3pt}}\kern-6pt}\nolimits}
\newcommand{\ii}{\infty}
\newcommand{\dt}{\delta}
\newcommand{\al}{\alpha}
\newcommand{\om}{\Omega}
\newcommand{\lm}{\lambda}
\renewcommand{\le}{\leq}
\newcommand{\fbi}{{$(F_{I})$}}
\newcommand{\prl}{{$(P_{\lm})$}}
\newtheorem{theorem}{Theorem}[section]
\newtheorem{proposition}[theorem]{Proposition}
\newtheorem{lemma}[theorem]{Lemma}
\newtheorem{corollary}[theorem]{Corollary}
\newtheorem{remark}[theorem]{Remark}
\newtheorem{definition}[theorem]{Definition}
\newcommand{\brm}{\begin{remark}\rm}
\newcommand{\erm}{\end{remark}}
\newcommand{\bdf}{\begin{definition}\rm}
\newcommand{\edf}{\end{definition}}
\newcommand{\bte}{\begin{theorem}}
\newcommand{\ete}{\end{theorem}}
\newcommand{\bpr}{\begin{proposition}}
\newcommand{\epr}{\end{proposition}}
\newcommand{\ble}{\begin{lemma}}
\newcommand{\ele}{\end{lemma}}
\newcommand{\bco}{\begin{corollary}}
\newcommand{\eco}{\end{corollary}}
\newcommand{\beq}{\begin{equation}}
\newcommand{\eeq}{\end{equation}}
\newcommand{\bdm}{\begin{displaymath}}
\newcommand{\edm}{\end{displaymath}}
\newcommand{\graf}[1]{\left\{\begin{array}{ll}#1\end{array}\right.}
\def\sideremark#1{\ifvmode\leavevmode\fi\vadjust{\vbox to0pt{\vss
 \hbox to 0pt{\hskip\hsize\hskip1em \vbox{\hsize2.1cm\tiny\raggedright\pretolerance10000 \noindent #1\hfill}\hss}\vbox to15pt{\vfil}\vss}}}
\begin{document}
\numberwithin{equation}{section}
\parindent=0pt
\hfuzz=2pt
\frenchspacing

\title[Dancer-Yan Spikes revisited]{Classification of singular limits for free boundary and singularly perturbed elliptic problems:
the Dancer-Yan spikes revisited.}

\thanks{2020 \textit{Mathematics Subject classification:} 35B40, 35B99, 35J61, 35J75, 35R35, 82D10.}

\author[D. Bartolucci]{Daniele Bartolucci}
\address{Daniele Bartolucci, Department of Mathematics, University of Rome \emph{"Tor Vergata"}, Via della ricerca scientifica n.1, 00133 Roma.}
\email{bartoluc@mat.uniroma2.it}

\author[A. Jevnikar]{Aleks Jevnikar}
\address{Aleks Jevnikar, Department of Mathematics, Computer Science and Physics, University of Udine, Via delle Scienze 206, 33100 Udine, Italy.}
\email{aleks.jevnikar@uniud.it}

\author[J. Wei]{Juncheng Wei}
\address{Juncheng Wei, Department of Mathematics, Chinese University of Hong Kong, Shatin, Hong Kong.}
\email{wei@math.cuhk.edu.hk}

\author[R. Wu]{Ruijun Wu}

\address{Ruijun Wu, School of mathematics and statistics, Beijing Institute of Technology, Zhongguancun South Street No. 5, 100081 Beijing, P.R.China.}
\email{ruijun.wu@bit.edu.cn}

\thanks{ D.B.
is partially supported by the MIUR Excellence Department Project MatMod@TOV
awarded to the Department of Mathematics, University of Rome ``Tor Vergata'' and by PRIN project 2022, ERC PE1\_11,
``{\em Variational and Analytical aspects of Geometric PDEs}'' and by the E.P.G.P. Project sponsored by the University of Rome ``Tor Vergata''. \\
 \indent A.J. is partially supported by INdAM-GNAMPA project ``{\em Analisi qualitativa di problemi differenziali non lineari}'' and PRIN Project 20227HX33Z ``{\em Pattern formation in nonlinear phenomena}''.\\
\indent D.B. and A.J. are member of the INDAM Research Group ``Gruppo Nazionale per l'Analisi Matematica, la Probabilità e le loro Applicazioni''.\\
\indent J.W. is partially supported by GRF fund of RGC of Hong Kong entitled
``\emph{New frontiers in singularity formations of nonlinear partial differential equations}''.}

\begin{abstract}
We classify the singular limits relative to a free boundary problem arising in plasma physics in dimension $d=2$,
under suitable natural integral bounds.
It turns out that one of the asymptotic behaviors allowed corresponds to the Dancer-Yan spikes (J. London Math. Soc. ({\bf 78}) 2008, 639--662).
Interestingly enough, roughly speaking  and unlike the higher dimensional case, it is not true that any solution in the limit is a Dancer-Yan spike.
Indeed, the spiking structure is more rich and we succeed in a detailed description
of the singular behavior by a careful analysis, from local to global, of the tiny difference between the maximum value of the spikes and their
``vanishing level'' defining the free boundary.
\end{abstract}
\maketitle
{\bf Keywords}: singularly perturbed elliptic problems, free boundary problems, spikes.

%\tableofcontents

%\section{\bf ....}

\
\

\setcounter{section}{0}
\setcounter{equation}{0}

\section{Introduction}
Let $\Omega\subset \R^2$ be a bounded domain of class~$C^{2,\beta}$ for some~$\beta\in (0,1)$, and
let $p\in (1,+\infty)$. For any fixed~$I>0$, we consider the pairs~$({\rm v},\gamma)\in C^{2,\beta}(\Omega)\cap C^0(\overline{\Omega})\times \R$
solving the free boundary problem:
\begin{align}\tag{$F_I$}\label{fbi}
 \begin{cases}
  -\Delta {\rm v} = [{\rm v}]_+^p, & \mbox{ in } \Omega, \vspace{2mm}\\
  \quad\;\; {\rm v} = \gamma, & \mbox{ on } \partial\Omega, \vspace{2mm}\\
  \bigints\limits_{\Omega} [{\rm v}]_+^p = I
 \end{cases}
\end{align}
This system is a simplified model of the Grad-Shafranov equation, describing plasma equilibria in a section of a Tokamak.
The interest about this equation has been recently renewed (see \cite{far,jey, pent1, pent2} and references quoted therein),
due to its relevance to the seek of cold fusion (\cite{Frei,Stac}). The so called ``plasma region'', is by definition the set $\om_+=\{v>0\}$ which is
the region of the Tokamak occupied by the plasma.
The major issue of cold fusion is to confine the hot plasma far away from the
boundary of the Tokamak, which is why it is interesting to try to describe the set of solutions such that $\om_+\Subset \om$, in which case
the boundary of $\om_+$ is by definition the ``free boundary'' of the plasma. A lot of work has been done to understand
existence, uniqueness, multiplicity of solutions and existence/non-existence/structure of the free boundary of \fbi\, either for
$p=1$ (\cite{CF,CPY,dam,FW,FL,Gal,KNS,KS,Kor3,Pudam,Te,Te2}) or in higher dimension (\cite{AmbM,BeBr,LP,Ort,Sij,Sij1,suz},
\cite{W}-\cite{wol} and more recently \cite{BJW}). As far as we are concerned exactly with the equation \!\fbi, that is the ``plasma problem'' in
dimension $2$ and $p\in (1,+\ii)$, we have the well known results
in \cite{AmbM}, \cite{BaMa}-\cite{BSp}, \cite{BeBr}, \cite{DY}, \cite{Liu,Mar} and more recently \cite{BJ1}, \cite{BJ3}, \cite{BHJY}.
Due to the fundamental results of Beresticky-Brezis (\cite{BeBr}), it is well known that for any~$I>0$
there exists at least one solution of~\fbi.

\medskip

We wish to make a first step in the classification of the singular limits allowed for solutions of \fbi\, as~$I\to +\ii$, a situation in which one expects to find
a non trivial free boundary structure. Indeed, among other results cited above, in dimension $d\geq 3$ and $1<p<\frac{d}{d-2}$,
the existence and qualitative properties of solutions of \fbi\, in the limit $I\to +\ii$, sharing a nice free boundary structure (sometimes called ``spikes'') was proved in \cite{FW,W}.
In a recent paper~\cite{BJW}, it has been shown that, under suitable natural integral bounds, in the limit $I\to +\ii$ any solution is the glueing of finitely many
spikes in the sense of \cite{FW,W}. We would like to obtain a similar result in dimension $d=2$ but we face a genuinely new difficulty. In fact, the construction
in \cite{FW,W} as well as the results in \cite{BJW} crucially rely on the classification of solutions of
$-\Delta w=[w-1]_+^p \mbox{ in } \R^d$, $\int\limits_{\R^d}[w-1]_+^p<+\ii$, $w>0, w\to 0$, $|x|\to +\ii$. However, as first noticed in \cite{DY},
there is no solution of this problem in dimension $d=2$, which prevents one to adopt the argument in \cite{FW} and \cite{W}.
This is why the results in \cite{FW} has been extended somehow to the case $d=2$ in \cite{DY} in a different way,
based on a careful choice of the solution of the limiting global problem and a simultaneous rescaling and blow up of the solutions.
 The corresponding \emph{model} solutions, which we refer to as the ``Dancer-Yan spikes'', are described in subsection \ref{subs:3.1} below.
First of all we classify the \emph{finite mass} solutions of the corresponding planar equations,
see Proposition \ref{prop:entire solution} about problem \eqref{eq:entire eqn}. Surprisingly enough,
it turns out that, unlike the case $d\geq 3$, in dimension $d=2$ \underline{it is not true} that, under suitable natural integral bounds,
in the limit $I\to +\ii$ any solution is the glueing of finitely many Dancer-Yan spikes (\cite{DY}). We illustrate this point below in terms of
more natural variables as in \cite{BJW}.

\medskip

For any fixed $p>1$ and $I>0$ let us set $I=\lambda^q$, where~$q=\frac{p}{p-1}$ is the H\"older conjugate to~$p$. We
consider the new variables~$(\alpha,\psi)\in \R\times C^{2,\beta}(\overline{\Omega})$ defined as follows
\begin{align}\label{eq:change variable}
   \begin{cases}
    \gamma= \lambda^{\frac{1}{p-1}}\alpha, \vspace{2mm}\\
    {\rm v} = \lambda^{\frac{1}{p-1}}(\alpha+\lambda\psi).
   \end{cases}
  \end{align}
  Thus, as far as $p>1$, for fixed $I>0$, ~\eqref{fbi} is equivalent to the following problem for fixed $\lm=I^{\frac1q}$,
\begin{align}\tag{$P_\lambda$}\label{prl}
 \begin{cases}
  -\Delta \psi = [\alpha+\lambda\psi]_+^p, & \mbox{ in } \Omega, \vspace{2mm} \\
\quad\;\; \psi=0, & \mbox{ on } \partial \Omega, \vspace{2mm}\\
  \bigints\limits_{\Omega} [\alpha+\lambda\psi]_+^p =1.
 \end{cases}
\end{align}
Besides the equivalence with \fbi, problems of the form \prl\, arise in other contexts but with different boundary conditions (\cite{Hor}).
Actually, \prl\, is closely related to the well known class of singularly perturbed problems (\cite{DY,FW,W}) as it is readily seen
putting
$$
v=\frac{\lambda}{|\alpha|}\psi \quad \mbox{ and }\quad  \varepsilon\coloneqq (|\alpha|^{p-1}\lambda)^{-1/2},
$$
which, in the limit $\alpha\to -\ii$, satisfy (see \cite{BJW} for a proof) $\lm\to +\ii$, $\varepsilon\to 0$ and
\begin{align}\label{eq:v form}
    \begin{cases}
        -\varepsilon^2\Delta v= [v-1]_+^p, & \mbox{ in } \Omega, \vspace{2mm} \\
        \quad\quad\;\; v=0, & \mbox{ on } \partial \Omega, \vspace{2mm} \\
        \bigint\limits_{\Omega} [v-1]_+^p=\frac{1}{|\alpha|^p}\to 0.
    \end{cases}
\end{align}

In order to classify the asymptotic behavior of solutions, we drop at first boundary conditions in \prl\, and
consider a sequence of solutions of
\begin{align}\label{eq:blow up eqn}
    \begin{cases}
        -\eps_n^2\Delta v_n= [v_n-1]_+^p, & \mbox{ in } \Omega\vspace{2mm}\\
        v_n\geq 0, & \mbox{ in } \Omega
    \end{cases}
\end{align}
with~$\eps_n\searrow 0$, which satisfy:
\begin{align}\label{Hypo on p-1} \tag{{\bf H1}}
    \boxed{\int\limits_\Omega \frac{1}{\eps_n^2}[v_n-1]_+^{p-1}\dd{x} \leq H_{p-1}, }
\end{align}
for some fixed $H_{p-1}>0$. It can be shown, see Proposition \ref{prop:local vanishing} below, that a sequence of solutions of \eqref{eq:blow up eqn}
satisfying \eqref{Hypo on p-1} also satisfies $[v_n-1]_+\to 0$ locally uniformly in $\om$. It is worth to remark that this
is a major difference with the higher dimensional case (see \cite{FW}, \cite{W} and \cite{BJW}) where local maxima stay bounded below far away from
$1$. As a consequence, in particular a subtle problem arise which is to analyze the fine properties of the vanishing quantity
$[v_n-1]_+$ around a local maximizer. To this aim we also assume that $v_n$ satisfies,
\begin{align}\label{Hypo on p} \tag{{\bf H2}}
\boxed{\int\limits_\Omega \frac{\theta_n}{\eps_n^2}[v_n-1]_+^{p}\dd{x} \leq H_p, }
\end{align}
for some $H_{p}>0$, where, here and in the rest of this work, $\theta_n$ is defined as in \cite{DY}: first define~$s_n$ by
\begin{align}\label{thetan1}
        \parenthesis{\frac{\eps_n}{s_n}}^{\frac{2}{p-1}} \phi'(1)\ln(\sqrt{\pi} s_n)=1,
    \end{align}
    where $\phi$ is the unique solution of the Emden equation \eqref{eq:Emden}; and then put
    \begin{align}\label{thetan2}
        \theta_n\coloneqq \phi'(1)\ln(\sqrt{\pi}s_n).
    \end{align}
    Remark that for~$\eps_n$ sufficiently small $s_n$ is uniquely defined and~$s_n\to 0^+$ as $\eps_n\to 0^+$.
    We assume w.l.o.g. that $s_n\leq \frac{1}{\sqrt{\pi}}$.
    Note that since~$\phi'(1)<0$, we have $\lim\limits_{n\to+\infty}\theta_n=+\infty$.

\medskip

Here and in the rest of this notes we will often pass to subsequences which will not be relabelled.
In the following statement $I_{p-1},I_p$, are
defined in \eqref{Ip-1}, \eqref{Ip}, $s_n\to0$ is defined in \eqref{thetan1} and $w^*$ is defined in \eqref{wstar}. Then we have,
    \begin{theorem}\label{thm:ALT near maximum}
        Let~$v_n$ be a sequence of solutions of~\eqref{eq:blow up eqn} satisfying~\eqref{Hypo on p-1},\eqref{Hypo on p}.
        Then:
        \begin{itemize}
            \item[either] {\rm (A) [Vanishing]} For any~$\Omega_0\Subset \Omega $ there exists~$n_0\in\mathbb{N}$ and~$C_0>0$,
            both depending by~$\Omega_0$, such that,
            \begin{align*}
                [v_n-1]_+ = 0, \quad \mbox{ in } \Omega_0, \qquad \forall n\geq n_0,
            \end{align*}

            $\hspace{-4mm}$in which case, for any~$\Omega_1$ satisfying~$\Omega_0\Subset \Omega_1\Subset \Omega$, there exists~$C(\Omega_0,\Omega_1)>0$ such that
            \begin{align}\label{eq:Vanishing-interior est}
                \|v_n\|_{L^\infty(\Omega_0)} \leq C(\Omega_0,\Omega_1) \int\limits_{\Omega_1} v_n\dd{x}, \qquad \forall n\geq n_0,
            \end{align}

            \item[or$\quad$] {\rm (B) } There exists an open subdomain~$\Omega_0\Subset \Omega$ and sequence of points~$x_n$ in~$\Omega_0$ such that\\
            $x_n\to x^*\in \Omega_0$ and
            \begin{align*}
                v_n(x_n)=\sup_{\Omega} v_n >1,
            \end{align*}

            $\hspace{-4mm}$in which case, setting
            $$
            t_n:= \braces{\frac{\phi(0)}{\theta_n(v_n(x_n)-1)}}^{\frac{p-1}{2}}
            $$
            and~$\tilde{v}_n$ as follows,
            \begin{align}\label{eq:refined rescaling}
        \tilde{v}_n(y)\coloneqq \phi'(1)\ln(\sqrt{\pi}s_n) \parenthesis{ v_n(x_n+ s_n y)-1}
        =\theta_n\parenthesis{ v_n(x_n+ s_n y)-1}
           \end{align}
            the sequence of \textbf{normalized} functions,
            \begin{align}\label{eq:un def}
                u_n(z)= {\mathcal{R}}_{t_n}\tilde{v}_n(z):= t_n^{\frac{2}{p-1}}\tilde{v}_n(t_n z)
            \end{align}
            satisfies~$u_n(0)=\phi(0)$ and
            \begin{align}\label{eq:un eqn}
                -\Delta u_n(z) = [u_n(z)]_+^p, \qquad \forall z\in \R^2, |z|<\frac{\dist(x^*,\p\Omega)}{4s_n t_n}.
            \end{align}
            Then either~$s_n t_n \to 0^+$, which means that there exists a \underline{spike}, or else~$s_n t_n \geq C >0$, which means
            that the spike is \underline{fading out}.
            More exactly, we have the following alternatives:

            \begin{itemize}
                \item[either] {\rm (B-i) [Type I spikes]} $t_n\to t_\infty\in (T_0,+\infty)$, where~$T_0^{\frac{2}{p-1}}=\frac{1}{2}\frac{H_p}{I_p}>0$,

                and then there exists a sequence~$R_n\to +\infty$ such that~$R_{n}s_n t_n\to 0^+$, and
                    \begin{align*}
                        \|u_n - w^*\|_{C^2(B_{2R_n}(0))}\to 0, \quad \mbox{ as } n\to+\infty.
                    \end{align*}
                    Furthermore,
                    \begin{align*}
                        \lim_{n\to+\infty} \frac{1}{\eps_n^2}\int\limits_{B_{2R_n s_n t_n}(x_n)} [v_n-1]_+^{p-1}\dd{x}=I_{p-1}, & &
                        \lim_{n\to+\infty}\frac{\theta_n}{\eps_n^2}\int\limits_{B_{2R_n s_n t_n}(x_n)} [v_n-1]_+^{p}\dd{x}=\frac{I_p}{t_\infty^{\frac{2}{p-1}}},
                    \end{align*}

                \item[or$\quad$] {\rm (B-ii) [Type II spikes]} $t_n\to+\infty\equiv t_\infty$ but~$s_n t_n\to 0^+$,

                and then there exists a sequence~$R_n\to +\infty$ such that~$R_n s_n t_n\to 0^+$,
                 \begin{align*}
                        \|u_n - w^*\|_{C^2(B_{2R_n}(0))}\to 0, \quad \mbox{ as } n\to+\infty,
                    \end{align*} and
                    \begin{align*}
                        \lim_{n\to+\infty} \frac{1}{\eps_n^2}\int\limits_{B_{2R_n s_n}(x_n)} [v_n-1]_+^{p-1}\dd{x}=I_{p-1}, & &
                        \lim_{n\to+\infty} \frac{\theta_n}{\eps_n^2}\int\limits_{B_{2R_n s_n}(x_n)} [v_n-1]_+^{p}\dd{x}=0,
                    \end{align*}

                \item[or$\quad$] {\rm (B-iii) [Fading spikes]} $t_n\to +\infty$ and~$s_n t_n \geq C>0$, \mbox{\rm (}which includes the case
                $s_n t_n \to+\infty$, that is $v_n(x_n)\to 1$ super fast\mbox{\rm )},
                and then~$v_n(x)-1$ decays at least as fast as $\eps_n^{\frac{2}{p-1}}$ in a \un{full disk} around $x^*$.
                More exactly for any $\dt\leq \frac{1}{4}\dist(x^*,\partial\Omega)$ there exists a bounded function $f\in L^{\ii}(B_\dt(x^*))$ such that
                $f(x)\leq \phi(0)$ and
                \begin{align*}
                    v_n(x)= 1+ \frac{\eps_n^{\frac{2}{p-1}}}{(s_n t_n)^{\frac{2}{p-1}}}(f(x)+o(1)), \quad \forall x \colon |x-x^*|<\dt,
                \end{align*}
and
                \begin{align}\nonumber
                        &\frac{1}{\eps_n^2}\int_{B_\dt(x_n)} [v_n(x)-1]_+^{p-1} \dd{x}\leq \pi \dt^2\frac{\phi^{p-1}(0)}{(s_n t_n)^{2}}(1+o(1)) , \\
                        \label{eq:Fading energies}
                        &\frac{\theta_n}{\eps_n^2}\int\limits_{B_{\dt}(x_n)} [v_n-1]_+^{p}\dd{x}\leq
                        \pi \dt^2 \frac{\phi^{p}(0)}{(s_n t_n)^{\frac{2p}{p-1}}} s_n^{\frac{2}{p-1}}(1+o(1)).
                    \end{align}
            \end{itemize}
        \end{itemize}
    \end{theorem}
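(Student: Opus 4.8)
The plan is to set up the dichotomy by first running the basic compactness machinery, then blowing up at a maximum, and finally distinguishing the sub-cases by tracking the scale $s_n t_n$ against the mass normalizations. First I would invoke Proposition \ref{prop:local vanishing}, which under \eqref{Hypo on p-1} gives $[v_n-1]_+\to 0$ locally uniformly. Either this persists to the strong conclusion that $[v_n-1]_+$ vanishes identically on every $\Omega_0\Subset\Omega$ for $n$ large --- this is case (A) --- or there is a sequence of (interior) maximizers $x_n\to x^*\in\Omega$ with $v_n(x_n)=\sup_\Omega v_n>1$, which is case (B). In case (A), since $v_n$ is then harmonic on $\Omega_0$ for $n\geq n_0$, the interior estimate \eqref{eq:Vanishing-interior est} follows from the mean value property together with the Harnack inequality (comparing the sup on $\Omega_0$ with the $L^1$ average over a slightly larger $\Omega_1$), using $v_n\geq 0$.

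The substance is case (B). The first step there is to verify the geometric meaning of the two natural rescalings. The Dancer-Yan choice of $s_n$ via \eqref{thetan1}--\eqref{thetan2} is designed so that the rescaled $\tilde v_n$ of \eqref{eq:refined rescaling} sees the Emden profile $\phi$ at the right order; then the further normalization $u_n={\mathcal R}_{t_n}\tilde v_n$ in \eqref{eq:un def} fixes $u_n(0)=\phi(0)$, and a direct computation, using that $-\eps_n^2\Delta v_n=[v_n-1]_+^p$ and $\eps_n = (s_n^{?})\dots$ --- more precisely unwinding the definitions of $s_n,\theta_n,t_n$ --- produces the scale-invariant equation \eqref{eq:un eqn} on the disk of radius $\sim \dist(x^*,\p\Omega)/(4 s_n t_n)$. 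The key point is that by \eqref{Hypo on p-1} the $L^{p-1}$-mass of $[u_n]_+$ on that disk is bounded by $H_{p-1}$ up to the scaling Jacobian, and similarly \eqref{Hypo on p} bounds a weighted $L^p$-mass; these uniform bounds let me apply elliptic estimates to extract a locally $C^2$-convergent subsequence $u_n\to u_\infty$, where $u_\infty$ solves $-\Delta u_\infty=[u_\infty]_+^p$ on $\R^2$ (if $s_n t_n\to 0$) with finite mass, $u_\infty(0)=\phi(0)$. By the classification of finite-mass entire solutions, Proposition \ref{prop:entire solution} about \eqref{eq:entire eqn}, the only such solution with the correct normalization is $w^*$ (defined in \eqref{wstar}), giving the $C^2(B_{2R_n}(0))$ convergence for a suitable slowly-diverging $R_n\to+\infty$ chosen by a diagonal argument so that $R_n s_n t_n\to 0^+$.

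With the profile identified, I would split according to the limit of $t_n=\{\phi(0)/(\theta_n(v_n(x_n)-1))\}^{(p-1)/2}$, equivalently according to the rate at which $v_n(x_n)-1\to 0$ compared with $1/\theta_n$. The bridge is the mass identities: computing $\frac{1}{\eps_n^2}\int_{B_{2R_n s_n t_n}(x_n)}[v_n-1]_+^{p-1}$ in the $u_n$ variables gives $\int_{B_{2R_n}}[u_n]_+^{p-1}\to\int_{\R^2}[w^*]_+^{p-1}=I_{p-1}$ (with $I_{p-1}$ as in \eqref{Ip-1}), while the analogous $L^p$ computation picks up an extra factor of $t_n^{-2/(p-1)}$ and $s_n^{2/(p-1)}$, so its limit is $I_p/t_\infty^{2/(p-1)}$ when $t_n\to t_\infty\in(0,\infty)$ and $0$ when $t_n\to+\infty$ (using \eqref{thetan1} to kill $s_n^{2/(p-1)}\theta_n$ against $\eps_n$). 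Comparing the $L^p$-identity with the global bound $H_p$ in \eqref{Hypo on p} forces $t_\infty\geq T_0$ with $T_0^{2/(p-1)}=\tfrac12 H_p/I_p$, yielding case (B-i). If $t_n\to+\infty$ but still $s_n t_n\to 0$, the same convergence of $u_n$ to $w^*$ holds but now the $L^p$-limit is $0$: case (B-ii). Finally, if $t_n\to+\infty$ and $s_n t_n\geq C>0$, there is no blow-up limit; instead, since $v_n(x_n)-1 = \phi(0)/(\theta_n t_n^{2/(p-1)})$ and $\theta_n=\phi'(1)\ln(\sqrt\pi s_n)$, one gets $v_n(x_n)-1 \lesssim \eps_n^{2/(p-1)}/(s_n t_n)^{2/(p-1)}$ via \eqref{thetan1}, and because $x_n$ is a \emph{global} maximizer this pointwise bound upgrades to $v_n(x)-1 = \frac{\eps_n^{2/(p-1)}}{(s_n t_n)^{2/(p-1)}}(f(x)+o(1))$ on a full disk $B_\dt(x^*)$, with $f\leq\phi(0)$ obtained as a subsequential limit of the (now uniformly bounded) rescaled functions on the fixed disk; integrating this expansion over $B_\dt$ gives \eqref{eq:Fading energies}. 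This is case (B-iii).

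The main obstacle I expect is the bookkeeping in case (B-iii): showing that the \emph{pointwise} upper bound at the maximum propagates to a genuine asymptotic \emph{profile} $f$ on a fixed-size disk, rather than just on shrinking neighborhoods. The delicate part is that $s_n t_n$ may stay bounded away from $0$ or even diverge, so the rescaled domain $B_{\dist(x^*,\p\Omega)/(4s_n t_n)}$ does not exhaust $\R^2$; one must instead work directly with $w_n(x):=\frac{(s_n t_n)^{2/(p-1)}}{\eps_n^{2/(p-1)}}(v_n(x)-1)$ on the fixed disk $B_\dt(x^*)$, show $[w_n]_+$ is uniformly bounded (from $v_n(x_n)=\sup v_n$ and the normalization), derive a uniform bound on $\Delta w_n = -\frac{1}{\eps_n^2}[v_n-1]_+^p \cdot(\text{scaling})$ that is $o(1)$ in $L^\infty$ when $s_n t_n\not\to 0$, and then pass to the limit to get $f$ (sub)harmonic with $f\leq\phi(0)$; the $o(1)$ in the stated expansion comes precisely from this vanishing of the forcing. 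Getting the constants and the logarithmic factors $\ln(\sqrt\pi s_n)$ to line up in \eqref{eq:Fading energies} requires care but is otherwise routine once the profile $f$ is in hand.
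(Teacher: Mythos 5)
Your plan follows the paper's own route almost step for step: Proposition~\ref{prop:local vanishing} to set up the dichotomy (A)/(B), the double rescaling $\tilde v_n\mapsto u_n={\mathcal R}_{t_n}\tilde v_n$ normalized by $u_n(0)=\phi(0)$, extraction of a $C^2_{\rm loc}$ limit solving \eqref{eq:entire eqn}, identification with $w^*$ via Proposition~\ref{prop:entire solution}, a diagonal choice of $R_n$, and the mass computations to split (B-i)/(B-ii), plus the fixed-scale analysis of $\tfrac{v_n-1}{\eps_n^{2/(p-1)}}$ in case (B-iii). Two points are glossed over where the paper has substance.

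First, the compactness step is not ``elliptic estimates'' alone: from $\|[u_n]_+^p\|_{L^1}\leq C$ you cannot directly conclude local $C^2$ boundedness of $u_n$, because $u_n$ has a harmonic part that could drift to $-\infty$. The paper isolates this in Lemma~\ref{lemma:ALT by Harnack} (a Brezis--Merle/Stampacchia decomposition combined with Harnack for the harmonic part), and then rules out the divergence alternative precisely because $u_n(0)=\phi(0)>0$. You do invoke the normalization, so the idea is there, but the argument is not complete without the dichotomy lemma (or an equivalent decomposition). Second, the $L^p$-mass scaling is $\frac{\theta_n}{\eps_n^2}\int_{B_{2R_ns_nt_n}(x_n)}[v_n-1]_+^p\,\dd x = t_n^{-2/(p-1)}\int_{B_{2R_n}}[u_n]_+^p\,\dd z$ using \eqref{2ways}: the factor is exactly $t_n^{-2/(p-1)}$, with no residual power of $s_n$ (the $s_n$ cancels against $\eps_n$ and $\theta_n$). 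Relatedly, in (B-iii) your renormalized $w_n$ satisfies $-\Delta w_n=(s_nt_n)^{-2}[w_n]_+^p$; the right-hand side is $o(1)$ only when $s_nt_n\to+\infty$, not whenever $s_nt_n\not\to 0$. If $s_nt_n\to 1/r_0\in(0,\infty)$ the limit $f$ solves a genuine semilinear equation, not Laplace's equation --- but the compactness argument and the bound $f\leq\phi(0)$ go through in either case, which is all that is needed for \eqref{eq:Fading energies}.
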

Although Theorem \ref{thm:ALT near maximum} describes the singular behavior just around one sequence of maximum points,
due to the richness of the picture it seems worth to present it in this form. A naive graphical representation of the spikes
is provided in Figure 1 at the end of section \ref{sect:blowup max}.
We refer to Remark \ref{rem:4.6} below for an equivalent formulation of the alternatives in terms of the $\tilde{v}_n$ variables.
In any case, as mentioned above, the spikes of Type I are the Dancer-Yan spikes, while the result shows
that we could have in principle other two singular behaviors, at least as far as \eqref{Hypo on p-1}, \eqref{Hypo on p} are satisfied.
The crux of the argument is
to realize that the $\tilde{v}_n$ variables in \eqref{eq:refined rescaling} are not well suited to describe all the profiles allowed by the singular limit,
a new rescaling being needed (i.e. \eqref{eq:un def}) which uses in a careful way the invariance of the equation, see \eqref{rinv1}-\eqref{rinv3}.\\

Remark that
\eqref{Hypo on p-1} is crucial as it prevents in the limit ``infinite mass'' solutions of the planar problem \eqref{eq:entire eqn} in case (B-ii),
in particular providing a control on the $(p-1)$-mass, i.e. the quantity $\frac{1}{\eps_n^2}\int\limits_{\om} [v_n-1]_+^{p-1}\dd{x}$.
Far from being a technical point, infinite mass solutions of \eqref{eq:entire eqn} exist, see subsection \ref{subs:3.3}.
The assumption \eqref{Hypo on p} ensures the same property in case (B-i),
in particular providing a control on the $(p)$-mass, i.e. the quantity $\frac{\theta_n}{\eps_n^2}\int\limits_{\om} [v_n-1]_+^{p}\dd{x}$.\\
It is natural to wonder whether or not we can drop \eqref{Hypo on p}, which in fact in some situations seems to be redundant.
However, we still don't know the answer to this natural question.\\

It is worth to remark that only the $(p-1)$-mass
seems to be well suited to satisfy a quantization phenomenon in the same spirit of critical equations (Yamabe $d\geq 3$, Liouville $d\geq 2$),
while this is clearly not the case for the  $p$-mass which is not quantized in general, see Theorem \ref{thm:ALT many maximum} below.

\bigskip

At this point a major problem arise in the description of the singular limit in case of multiple-spiking at some point, that is,
whenever many spikes of different type are found to
be clustering at the same interior point, say $x_{n,j}\to x^*\in\Omega$. Remark that solutions of this sort exist (see \cite{DY}).
However, a full description of this phenomenon would require the analysis of the local
interaction of the three different spikes, i.e. Type I, Type II and Fading, which is rather hard in general. Unfortunately we still miss a
full description of this singular limit, which would play in this context the same role say of the local concentration-compactness theory for Liouville
type equations (\cite{bm,ls}).
Nevertheless we succeed in obtaining a full classification of the clustering
at an interior point adding back Dirichlet boundary conditions. In fact, in this situation, by a suitable non-vanishing assumption, we are able to rule out
both Vanishing and Fading. To this aim, let us associate to the sequence~$v_n$ the following sets:
    \begin{align*}
        \Sigma_{\rm I}\coloneqq \braces{x\in\Omega \mid \exists\, x_{n}\to x\; \mbox{ s.t. } \; \mbox{ the sequence }\;
        v_n(x_n)\; \mbox{ yields a spike of Type I }},
    \end{align*}
    \begin{align*}
        \Sigma_{\rm II}\coloneqq \braces{x\in\Omega \mid \exists\, x_{n}\to x\; \mbox{ s.t. } \; \mbox{ the sequence }\;
        v_n(x_n)\; \mbox{ yields a spike of Type II }},
    \end{align*}
    and finally define the \un{singular set},
    \begin{align*}
        \Sigma\coloneqq \Sigma_{\rm I} \cup \Sigma_{\rm II}.
    \end{align*}
    For any~$r>0$, let
    \begin{align*}
        (\Sigma)_r\coloneqq \braces{x\in\Omega\mid \dist(x,\Sigma)<r}
    \end{align*}
    be the~$r$-neighborhood of~$\Sigma$ and $\N$ denote the set of non negative integers.  Then we have,

    \begin{theorem}\label{thm:ALT many maximum}
        Let~$v_n$ be a sequence of solutions of~\eqref{eq:blow up eqn} satisfying $v_n|_{\p\Omega}=0$ and~\eqref{Hypo on p-1},
        \eqref{Hypo on p}. Assume that the following non-vanishing condition is satisfied, for some~$C_0>0$:
                \begin{align}\label{eq:NV p}\tag{\bf NV$_p$}
                    \frac{\theta_n}{\eps_n^2}\int\limits_{\Omega} [v_n(x)-1]_+^p\dd{x}\geq C_0,\quad \forall\,n\in\N.
                \end{align}
        Then the singular set $\Sigma$ is not empty, it consists of finitely many points, and for any~$r>0$, there exists~$n_r\in\mathbb{N}$ such that
        \begin{align*}
            [v_n(x)-1]_+ =0, \qquad \forall x\in \Omega\setminus (\Sigma)_r, \; \forall n > n_r.
        \end{align*}
        In particular there exist $N_I\in\N$, $N_{II}\in\N$ with $N_{I}\geq 1$ such that, defining
        \begin{align}
         X_{\rm I}=\{x^*_{n,i}\}_{i\in\{1,\cdots,N_I\}, n\in\N}
        \end{align}
        to be the sequences of local maximizers yielding Type I spikes and
        \begin{align}
         X_{\rm II}=\{x^{**}_{n,i}\}_{i\in\{1,\cdots,N_{II}\},n\in\N}
        \end{align}
        to be the sequences of local maximizers yielding Type II spikes, the following
        facts hold true:
         \begin{itemize}
            \item[(a)]
     $$
     v_n(x)= 1+\frac{1}{t_{n,j}^{\frac{2}{p-1}}\theta_n} w^*(\frac{x-x_{n,j}}{s_n t_{n,j}})+ o(\frac{1}{t_{n,j}^{\frac{2}{p-1}}\theta_n }),
     \quad |x-x_{n,j}|\leq R_{n}s_nt_{n,j},\;j=1,\cdots,N_I+N_{II},
     $$
 where
     \begin{align*}
    & t_{n,j}\to t_{\ii,j}\in (T_0,+\ii] \mbox{ as }n\to+\infty,\quad  T_0^{\frac{2}{p-1}}=\frac{1}{2}\frac{H_p}{I_p}>0, \mbox{ and }\\
    & \graf{t_{\ii,j}\in (T_0,+\ii), \;\mbox{if}\;\; x_{n,j}=x^*_{n,i},\mbox{ for some }\,i\in \{1,\cdots,N_{I}\}\\\,\\
     t_{\ii,j}=+\ii, \;\mbox{if}\;\; x_{n,j}=x^{**}_{n,i},\,\mbox{ for some }i\in \{1,\cdots,N_{II}\}}
      \end{align*}
            \item[(b)]  \begin{align*}
  v_n(x)= \frac{1}{\theta_n}\left(\sum_{i=1}^{N_I} \frac{I_p}{t_{n,i}^{\frac{2}{p-1}}}G(x,x^*_{\ii,i})+o_r(1)\right),\;\forall\, x\in \om\setminus (\Sigma)_r
    \end{align*}
where $o_r(1)$ uniformly converges to $0$ for any fixed $r$ small enough;
           \item[(c)]
           \begin{align}\label{p-1:quant}
              \lim\limits_{n\to+\infty}\frac{1}{\eps_n^2}\int\limits_{\om} [v_n(x)-1]_+^{p-1}\dd{x}=
              (N_I+N_{II})I_{p-1}
            \end{align}
            \begin{align}\label{p-2:nquant}
              \lim\limits_{n\to+\infty}\frac{\theta_n}{\eps_n^2}\int\limits_{\om} [v_n(x)-1]_+^{p}\dd{x}\to
              \frac{1}{\gamma_\ii}:=\sum\limits_{j\in \{1,\cdots,N_I\}}\frac{I_p}{t_{\ii,j}^{\frac{2}{p-1}}}.
            \end{align}

            \item[(d)] the plasma region, that is the subset
            $$
            \Omega_{n,+}\coloneqq\braces{x\in\Omega\mid v_n(x)>1}
            $$
            consists of \un{asymptotically round points} in the sense of Caffarelli--Friedman \mbox{\rm (}\cite{CF}\mbox{\rm )},\\
            namely, for any~$0<\theta<1$
    \begin{align}\label{eq:round pts}
        \bigcup_{j=1}^{N_I+N_{II}} B_{(1-\theta) s_n t_{n,j}}(x_{n,j})
        \Subset \Omega^{n,+}
        \Subset  \bigcup_{j=1}^{N_I+N_{II}} B_{(1+\theta) s_n t_{n,j}}(x_{n,j})
    \end{align}
    for any $n$ sufficiently large;

        \item[(e)] let us define,
    \begin{align*}
        \Sigma_{\rm I} =\braces{x^*_{\ii,1}, \cdots, x^*_{\ii,m_1}}
    \end{align*}
    to be the set of {spike points} of Type I, and for each~$\ell=1,2,\cdots, m_1$, denote
    \begin{align}\label{p-mass}
        M_\ell\coloneqq \sum_{i=1,\cdots,N_{I}\; : \; x^*_{n,i}\to x^*_{\ii,\ell}} \frac{1}{t_{\infty,i}^{\frac{2}{p-1}}}.
    \end{align}
    Consider the following Kirchhoff-Routh Hamiltonian
    \begin{align}\label{KRH}
        \mathcal{H}(q_1, \cdots, q_{m_1})\coloneqq
        \sum_{i=1}^{m_1} M_i^2 H(q_i, q_i)
         + \sum_{i\neq l} M_i M_l G(q_i, q_l),
     \end{align}
     then the vector $(x^*_{\ii,1}, \cdots, x^*_{\ii,m_1})$ is a critical point of $\mathcal{H}$.

        \end{itemize}

    \end{theorem}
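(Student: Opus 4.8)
The plan is to feed the single‑bump description of Theorem~\ref{thm:ALT near maximum} into a global bubbling scheme, and then to close the picture with a local Pohozaev identity. First, \eqref{eq:NV p} rules out the Vanishing alternative (A): if $[v_n-1]_+\equiv0$ on every $\Omega_0\Subset\Omega$, then by superharmonicity and $v_n|_{\partial\Omega}=0$ one has $v_n\le1$ in $\Omega$ and zero $p$-mass, against \eqref{eq:NV p}; so $\sup_\Omega v_n>1$ along the chosen subsequence and we are in case (B). I would then consider the measures $\mu_n:=\varepsilon_n^{-2}[v_n-1]_+^{p-1}\,dx$ and $\sigma_n:=\theta_n\varepsilon_n^{-2}[v_n-1]_+^{p}\,dx$, which by \eqref{Hypo on p-1}, \eqref{Hypo on p} have bounded mass; up to a subsequence $\mu_n\rightharpoonup\mu$, $\sigma_n\rightharpoonup\sigma$ with $\sigma(\overline\Omega)\ge C_0$ by \eqref{eq:NV p}. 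The central point is an $\varepsilon$-regularity statement: there is a universal $\varepsilon_0>0$ such that $\limsup_n\mu_n(B_{2r}(x_0))<\varepsilon_0$ implies $[v_n-1]_+\equiv0$ on $B_r(x_0)$ for $n$ large. One proves it by contradiction: if $[v_n-1]_+\not\equiv0$ on $B_r(x_0)$, rescale around a local maximizer there; by a local‑maximum version of Theorem~\ref{thm:ALT near maximum}, Proposition~\ref{prop:local vanishing} and the classification Proposition~\ref{prop:entire solution}, the limiting profile is of Type~I, II, or Fading, and in the first two cases it carries a definite $\mu_n$-quantum in $B_{2r}(x_0)$. \emph{Fading is excluded using the Dirichlet datum}: from $v_n(x)=\varepsilon_n^{-2}\int_\Omega G(x,y)[v_n(y)-1]_+^p\,dy$ and the bound $\varepsilon_n^{-2}\int_\Omega[v_n-1]_+^p\le H_p/\theta_n\to0$ coming from \eqref{Hypo on p}, the value of $v_n$ at a spiking maximizer is, to leading order, (a logarithm of the spike scale)$\times$(the spike's $p$-mass), which forces the scale $s_nt_n\to0^+$, incompatible with (B-iii); and a residual diffuse bump off the would‑be singular set would leave $[v_n-1]_+\not\equiv0$ on a fixed disk, contradicting the conclusion a posteriori. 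Granting $\varepsilon$-regularity, the set $\Sigma$ of points having $\mu$-mass $\ge\varepsilon_0$ in every neighbourhood is finite (total mass $\le H_{p-1}$), $\mu$ is purely atomic, and $[v_n-1]_+\equiv0$ off $(\Sigma)_r$ for $n$ large, which is the second displayed assertion of the Theorem.

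Next, at a fixed $x^*\in\Sigma$ I would run the iterative blow‑up inside a small ball: extract a first maximizer sequence (a Type~I or Type~II spike by the previous step), delete its concentration ball $B_{R_ns_nt_{n,1}}(x_{n,1})$, and repeat on the punctured domain. Each extracted spike carries $(p-1)$-mass $\to I_{p-1}$ concentrated on its ball by (B-i)/(B-ii); distinct extracted balls are asymptotically disjoint — two comparable‑scale bumps would contradict the single‑bump rigidity in Proposition~\ref{prop:entire solution} — and the bound $H_{p-1}$ forces termination after at most $H_{p-1}/I_{p-1}+1$ steps, while a further application of $\varepsilon$-regularity on the punctured domain shows no $(p-1)$-mass remains in the necks, so $[v_n-1]_+\equiv0$ off the union of the finitely many concentration balls. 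This produces the families $X_{\rm I},X_{\rm II}$ with the profile (a) — which is (B-i)/(B-ii) rewritten in the original variable, $t_{n,j}\to t_{\infty,j}\in(T_0,+\infty]$, finite on $\Sigma_{\rm I}$ and $+\infty$ on $\Sigma_{\rm II}$ — the quantization \eqref{p-1:quant}$=(N_I+N_{II})I_{p-1}$, and \eqref{p-2:nquant}, in which only Type~I spikes contribute $I_p/t_{\infty,j}^{2/(p-1)}$ each while Type~II, the necks and the exterior contribute $0$; since \eqref{eq:NV p} makes this last sum $\ge C_0>0$, at least one Type~I spike survives, $N_I\ge1$. Finally (d) follows from (a) together with the vanishing off the balls: rescaled by $s_nt_{n,j}$, the plasma region inside $B_{R_ns_nt_{n,j}}(x_{n,j})$ is a $C^2$-small perturbation of $\{w^*>0\}=B_1(0)$, whence \eqref{eq:round pts}.

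For (b) I would pass to the limit in the Green representation: the masses $\sigma_n$ sit on the concentration balls with weights tending to $I_p/t_{\infty,i}^{2/(p-1)}$ on Type~I balls and to $0$ on Type~II balls, so $\sigma_n\rightharpoonup\sum_{i=1}^{N_I}(I_p/t_{\infty,i}^{2/(p-1)})\delta_{x^*_{\infty,i}}$, hence $\theta_nv_n\to\Psi:=\sum_{i=1}^{N_I}(I_p/t_{\infty,i}^{2/(p-1)})G(\cdot,x^*_{\infty,i})=I_p\sum_{\ell=1}^{m_1}M_\ell\,G(\cdot,x^*_{\infty,\ell})$ away from $\Sigma$, indeed in $C^1_{loc}$ by elliptic estimates since $\Delta v_n=0$ there; this is (b). For (e) I would apply the local Pohozaev (translation) identity on a fixed small ball $B_\rho(x^*_{\infty,\ell})$: since $[v_n-1]_+\equiv0$ on $\partial B_\rho(x^*_{\infty,\ell})$ for $n$ large the potential term drops and one is left with $\int_{\partial B_\rho}\big(\partial_\nu v_n\,\partial_kv_n-\tfrac12|\nabla v_n|^2\nu_k\big)=0$; multiplying by $\theta_n^2$, using the $C^1$ convergence to $\Psi$ and letting $n\to\infty$ and then $\rho\to0$, the classical residue computation for $\Psi=$ (logarithmic singularity of strength $I_pM_\ell$) $+$ (harmonic regular part $R_\ell$) gives $\nabla R_\ell(x^*_{\infty,\ell})=0$, which by the symmetry of $G$ and $H$ is exactly $\nabla_{q_\ell}\mathcal{H}(x^*_{\infty,1},\dots,x^*_{\infty,m_1})=0$ for the Hamiltonian \eqref{KRH}.

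The hard part is the first two steps: establishing the $\varepsilon$-regularity together with the exclusion of the Fading/diffuse alternative, and proving that no $(p-1)$-mass is lost in the necks between different scales and between distinct clusters. This is the analogue, in the present setting, of the energy‑quantization (``no neck'') results for critical equations, and it is precisely here that the two integral bounds \eqref{Hypo on p-1}, \eqref{Hypo on p}, the Dirichlet condition, and the scaling invariances \eqref{rinv1}--\eqref{rinv3} of the equation have to be used in a careful, coordinated way.
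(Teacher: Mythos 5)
Your outline is broadly in the right direction but reframes the argument as a measure--theoretic $\varepsilon$--regularity / atomic--concentration analysis, whereas the paper proceeds by direct iterative extraction of maximizers (\`a la Struwe). The two approaches are genuinely different in packaging, and your sketches of (b), (d), (e) (Green representation, roundness of the plasma set, local Pohozaev residue) do match the paper's. However there are two real gaps in the hard part, which you yourself flag as the hard part.

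First, the $\varepsilon$--regularity claim --- ``$\limsup_n\mu_n(B_{2r}(x_0))<\varepsilon_0$ implies $[v_n-1]_+\equiv0$ on $B_r(x_0)$'' --- is \emph{not} a purely local truth: a Fading spike of scale $s_nt_n\to+\infty$ has $(p-1)$-mass $\lesssim \pi\delta^2\phi(0)^{p-1}/(s_nt_n)^2\to0$ while $[v_n-1]_+\not\equiv0$ on a fixed disk (see (B-iii) and the first estimate in \eqref{eq:Fading energies}). So the crux is precisely to exclude Fading, and this requires the Dirichlet condition and interaction with the rest of the picture. You acknowledge this, but the sketch ends in circularity (``contradicting the conclusion a posteriori''), and the ``(logarithm of spike scale)$\times$(local $p$-mass)'' heuristic omits that the value at a Fading maximizer is mostly built from \emph{other} spikes through the Green kernel, not from the local $p$-mass. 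The paper handles this in two separate places: at the global maximum, Fading forces a global pointwise bound on $v_n-1$ whose $p$-mass contradicts \eqref{eq:NV p} (Section~\ref{sec6}); at a subsequent local maximum clustering with an existing Type~I/II spike, a quantitative Green--representation computation (subsection~\ref{subs:5.2}) shows $[v_n-1]_+\equiv0$ in the relevant annulus, a contradiction.

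Second, you do not prove separation of the extracted concentration disks, nor the vanishing of $[v_n-1]_+$ on the necks; both are asserted via ``single-bump rigidity'' and ``another application of $\varepsilon$-regularity on the punctured domain''. The paper proves the separation $|x_{n,1}-x_{n,2}|/(s_nt_{n,2})\to+\infty$ by a direct contradiction with the normalization $u_{n,2}(0)=\phi(0)$ (Lemma~\ref{lemma:rel dist}); and once the inductive extraction terminates (forced by the bound $H_{p-1}$ and the $I_{p-1}$-quantum per spike), the vanishing off the union of disks is already built into the termination criterion and reinforced by the maximum principle for the harmonic restriction of $u_{n,j}$ outside its concentration disk. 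You also omit the preliminary step (Lemma~\ref{lem:bdy}) that keeps maximizers away from $\partial\Omega$, which is needed so that the singular set is interior. None of these are unfixable, but as written the proposal does not close them, and they are exactly where the paper's specific lemmas do the work.
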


Remark that (d) is a generalization of the well known result for $p=1$ by Caffarelli-Friedman (\cite{CF}).   
Also, we point out that $\Sigma$ is a set of interior points. The statement does not exclude that~$\Sigma_{\rm I}\cap \Sigma_{\rm II}\neq\emptyset$.
In principle there could be several sequences of local maximizers clustering
at a fixed
point~$x\in \Sigma_{\rm I}\cap \Sigma_{\rm II}$, each one yielding a spike of Type I or II. Recall that we already know that
spikes of Type I are not ``simple'' in general (\cite{DY}). A naive graphical representation of the global behavior of the
spikes is provided in Figure 2 in section \ref{sec6}.\\
We did our best to simplify the exposition of the proof by splitting the first part of the argument into
three subsections. It is easy to prove that any two spikes (Type I, Type II or Fading) converging at the same point (say $x^*$)
cannot be too close each other (see subsection \ref{subs:5.1}).
On the other side, it is more delicate to prove that if one already has a spike of Type I or of Type II at $x^*$,
then there can be no Fading spikes at $x^*$, which is done
in subsection \ref{subs:5.2}. At last, the formation of two spikes either of Type I or of Type II clustering at the same point
is handled in subsection \ref{subs:5.3}.
These are the building blocks of the proof which is then completed in section \ref{sec6}.

\bigskip

Our last result is concerned with the non trivial consequences of Theorem \ref{thm:ALT many maximum} about \prl.

 \begin{theorem}\label{thm:last} Let $\psi_n$ be sequence of solutions of \prl\, for $\lm=\lm_n\to +\ii$, $\al=\al_n\leq -1$  and assume that
 \begin{equation}\label{H3}
 \frac{1}{C_{p}}\leq \frac{\lm_n}{|\al_n|}\log(|\al_n|^{p-1}\lm_n)\leq C_{p},
 \end{equation}
 and
 \begin{equation}\label{H4}
 \lm_n\int\limits_{\om}[\al_n+\lm_n\psi_n]^{p-1}\leq C_{p-1}
 \end{equation}
 for some $C_p>1$, $C_{p-1}>0$. Then the conclusions of Theorem \ref{thm:ALT many maximum} hold true for $v_n=\frac{\lm_n}{|\al_n|}\psi_n$ and in particular,
 recalling
 $\gamma_\ii$ in \eqref{p-2:nquant}, we have that:
 \begin{itemize}
            \item[(i)]
 $$
 |\al_n|=(1+o(1))\gamma_\ii|\phi^{'}(1)|\frac{p-1}{2}\lm_n\log(\lm_n),
 $$
 $$
 \theta_n=(1+o(1))|\phi^{'}(1)|\frac{p-1}{2}\log(|\al_n|)
 $$
 and
 $$
\lm_n\int\limits_{\om}[\al_n+\lm_n\psi_n]^{p-1}\to (N_I+N_{II}) I_p,
 $$

$$
\int\limits_\Omega \frac{\theta_n}{\eps_n^2}[v_n-1]_+^{p}\dd{x} =
\int\limits_\Omega \frac{\lm_n}{|\al_n|}\theta_n[\al_n+\lm_n\psi_n]_+^{p}=\frac{\lm_n}{|\al_n|}\theta_n\to \frac{1}{\gamma_\ii};
$$

            \item[(ii)] with the notations of Theorem \ref{thm:ALT many maximum},
 \eqref{eq:round pts} holds true in the plasma region $$\om_{n,+}=\{\al_n+\lm_n\psi_n>0\}$$  and
         $$
     \psi_n(x)= \frac{|\al_n|}{\lm_n}+(1+o(1))\frac{\gamma_\ii}{t_{n,j}^{\frac{2}{p-1}}} w^*(\frac{x-x_{n,j}}{s_n t_{n,j}})
     \quad \forall\, |x-x_{n,j}|\leq R_{n}s_nt_{n,j},\;j=1,\cdots,N_I+N_{II},
     $$
       where $t_{n,j},\;j=1,\cdots,N_I+N_{II}$ satisfy {\rm (a)} of Theorem \ref{thm:ALT many maximum} and in particular,
         $$
         \frac{|\al_n|}{\lm_n}\leq \psi_n(x)\leq \frac{|\al_n|}{\lm_n}+\frac{\gamma_\ii}{T_{0}^{\frac{2}{p-1}}}\phi(0),\quad \forall\,x\in \om_{n,+}
         $$
         and
         \begin{align}\label{green:rep}
  \psi_n(x)= {(1+o(1))}\gamma_\ii\left(\sum_{i=1}^{N_I} \frac{I_p}{t_{n,i}^{\frac{2}{p-1}}}G(x,x^*_{\ii,i})+o_r(1)\right),\;\forall\,
  x\in \om\setminus (\Sigma)_r
    \end{align}
where $o_r(1)$ uniformly converges to $0$ for any fixed $r$ small enough;\\

            \item[(iii)] {\rm (e)} of Theorem \ref{thm:ALT many maximum} holds, that is,
            the vector of spike points of Type I is a critical point of the Kirchhoff-Routh Hamiltonian \eqref{KRH}.
\end{itemize}
 \end{theorem}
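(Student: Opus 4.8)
The plan is to reduce the statement to Theorem~\ref{thm:ALT many maximum} via the change of unknowns recorded before \eqref{eq:v form}. Set $v_n\coloneqq \tfrac{\lm_n}{|\al_n|}\psi_n$ and $\eps_n\coloneqq (|\al_n|^{p-1}\lm_n)^{-1/2}$. The maximum principle applied to \prl\ gives $\psi_n\geq 0$, hence $v_n\geq 0$; the identity $\al_n+\lm_n\psi_n=|\al_n|(v_n-1)$ together with $-\Delta\psi_n=[\al_n+\lm_n\psi_n]_+^p$ shows that $v_n$ solves \eqref{eq:blow up eqn} with this $\eps_n$ and $v_n|_{\p\om}=0$ (this is the computation of \cite{BJW} leading to \eqref{eq:v form}); finally $|\al_n|\geq 1$ and $\lm_n\to+\ii$ force $\eps_n\to 0$ (w.l.o.g.\ $\eps_n\searrow 0$ along a subsequence), so $s_n,\theta_n$ from \eqref{thetan1}--\eqref{thetan2} are well defined with $\theta_n\to+\ii$; moreover \eqref{H3} gives $|\al_n|\geq C_p^{-1}\lm_n\log\lm_n\to+\ii$, so $\al_n\to-\ii$ and we are genuinely in the regime of \eqref{eq:v form}.

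The next step is to verify the three structural hypotheses of Theorem~\ref{thm:ALT many maximum} for $v_n$. Since $[v_n-1]_+=|\al_n|^{-1}[\al_n+\lm_n\psi_n]_+$ and $\eps_n^{-2}=|\al_n|^{p-1}\lm_n$, a direct substitution gives
$$
\frac1{\eps_n^2}\int\limits_\om[v_n-1]_+^{p-1}=\lm_n\int\limits_\om[\al_n+\lm_n\psi_n]_+^{p-1},\qquad
\frac{\theta_n}{\eps_n^2}\int\limits_\om[v_n-1]_+^{p}=\frac{\lm_n\theta_n}{|\al_n|}\int\limits_\om[\al_n+\lm_n\psi_n]_+^{p}=\frac{\lm_n\theta_n}{|\al_n|},
$$
the last equality by the integral constraint in \prl. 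Thus \eqref{H4} is exactly \eqref{Hypo on p-1} for $v_n$. For \eqref{Hypo on p} and \eqref{eq:NV p} I would invert \eqref{thetan1}: from $s_n=\eps_n\theta_n^{(p-1)/2}$ and $\theta_n=\phi'(1)\ln(\sqrt\pi s_n)$, using $\ln\theta_n=o(\theta_n)$, one gets $\theta_n=(1+o(1))|\phi'(1)|\,|\ln\eps_n|=(1+o(1))\tfrac{|\phi'(1)|}{2}\log(|\al_n|^{p-1}\lm_n)$; hence $\tfrac{\lm_n\theta_n}{|\al_n|}=(1+o(1))\tfrac{|\phi'(1)|}{2}\cdot\tfrac{\lm_n}{|\al_n|}\log(|\al_n|^{p-1}\lm_n)$, which by the two-sided bound \eqref{H3} is bounded above and below by positive constants; this yields \eqref{Hypo on p} and \eqref{eq:NV p} along a tail in $n$ (the remaining finitely many indices being harmless).

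With the hypotheses in place, Theorem~\ref{thm:ALT many maximum} applies to $v_n$ and it remains to rewrite (a)--(e) in the $\psi_n$ variables through $\psi_n=\tfrac{|\al_n|}{\lm_n}v_n$. First, \eqref{p-2:nquant} together with the second identity above gives $\tfrac{\lm_n\theta_n}{|\al_n|}\to\tfrac1{\gamma_\ii}$, i.e.\ $\tfrac{|\al_n|}{\lm_n\theta_n}\to\gamma_\ii$; multiplying (a) and (b) by $\tfrac{|\al_n|}{\lm_n}$ then yields the local profile in (ii) and the Green representation \eqref{green:rep}. On $\om_{n,+}=\{\al_n+\lm_n\psi_n>0\}=\{v_n>1\}$ one has $v_n\geq 1$ and, by the $C^2$-profile of (a) and $w^*\leq\phi(0)$, $0\leq v_n-1\leq \phi(0)\,(t_{n,j}^{2/(p-1)}\theta_n)^{-1}(1+o(1))$, which with $t_{n,j}\geq T_0(1-o(1))$ gives the two-sided bound on $\psi_n$ in (ii); \eqref{eq:round pts} is (d) verbatim and (iii) is (e) verbatim (the spike points, the weights $M_\ell$ and the functions $G,H$ are unchanged under the rescaling). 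For (i), the first identity above and the $(p-1)$-mass quantization \eqref{p-1:quant} give $\lm_n\int_\om[\al_n+\lm_n\psi_n]_+^{p-1}\to(N_I+N_{II})I_{p-1}$, while $\tfrac{\lm_n\theta_n}{|\al_n|}\to\tfrac1{\gamma_\ii}$ gives $|\al_n|=(1+o(1))\gamma_\ii\theta_n\lm_n$; inserting this into the asymptotic for $\theta_n$ above, and using $\log|\al_n|=(1+o(1))\log\lm_n$ (valid because $\theta_n$ grows only logarithmically in $\lm_n$), pins down the closed forms for $\theta_n$ and $|\al_n|$.

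I expect the only genuinely analytic point beyond invoking Theorem~\ref{thm:ALT many maximum} to be the asymptotic inversion of the transcendental defining relation \eqref{thetan1} for the triple $(\eps_n,s_n,\theta_n)$, together with the care needed to propagate the $(1+o(1))$ errors through the nested logarithms when this is combined with \eqref{H3}, \eqref{p-1:quant} and \eqref{p-2:nquant} so as to fix the leading constants in (i) --- in particular, resolving $\log(|\al_n|^{p-1}\lm_n)$ into $\log|\al_n|$ and $\log\lm_n$. Everything else --- transferring \eqref{H4} to \eqref{Hypo on p-1}, producing \eqref{Hypo on p} and \eqref{eq:NV p}, and rewriting the conclusions of Theorem~\ref{thm:ALT many maximum} --- is routine bookkeeping with the substitution.
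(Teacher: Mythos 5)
Your proposal is correct and follows essentially the same route as the paper's own proof: substitute $v_n=\frac{\lm_n}{|\al_n|}\psi_n$, $\eps_n^2=(|\al_n|^{p-1}\lm_n)^{-1}$, check directly that \eqref{H4} becomes \eqref{Hypo on p-1}, use the integral constraint in \prl\ to write $\frac{\theta_n}{\eps_n^2}\int_\om[v_n-1]_+^p=\frac{\lm_n\theta_n}{|\al_n|}$, invert the transcendental relation \eqref{thetan1} to get $\theta_n=(1+o(1))\frac{|\phi'(1)|}{2}\log(|\al_n|^{p-1}\lm_n)$ so that \eqref{H3} yields \eqref{Hypo on p} and \eqref{eq:NV p}, and then transfer (a)--(e) of Theorem~\ref{thm:ALT many maximum} via $\psi_n=\frac{|\al_n|}{\lm_n}v_n$ and $\frac{|\al_n|}{\lm_n\theta_n}\to\gamma_\ii$. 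The one thing you deliberately leave implicit --- resolving $\log(|\al_n|^{p-1}\lm_n)$ into $\log|\al_n|$ and $\log\lm_n$ to pin down the leading constants in (i) --- is precisely where some care is warranted, since $\log|\al_n|=(1+o(1))\log\lm_n$ makes the two contributions the same order.
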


It is easy to see by the proof that the assumption $\al_n\leq -1$ could be replaced by any other bound of the form $\al_n\leq \ov{\al}$, for some
fixed $\ov{\al}<0$.

\medskip

\subsection{Open problems and conjectures.}$\,$\\
We list hereafter some natural open problems about the results discussed so far.\\

Property (e) in Theorem \ref{thm:ALT many maximum} is at hand since, after a suitable rescaling, Type I spikes converge to a sum of Green functions,
essentially by the same mechanism occurring for Liouville-type equations (\cite{yy},\cite{MaW}) or either in dimension $d\geq 3$ (\cite{BJW}).
On the other side, in sharp contrast with the higher dimensional case, the $(p)$-mass \eqref{p-mass} is not
quantized (see also \eqref{p-2:nquant}), while the $(p-1)$-mass does, see \eqref{p-1:quant}.
Remark that \eqref{green:rep} in principle contains terms due to Type II spikes as well, but
$[v_n-1]_+$ is in that case so small that the contribution to the solution is of minor order, see also \eqref{green:1} below.
Therefore Type II spikes are more difficult to analyze.
This is why it seems an interesting open problem to describe the singular limit in case \ref{eq:NV p} fails,
that is $\frac{\theta_n}{\eps_n^2}\int\limits_{\Omega} [v_n(x)-1]_+^p\dd{x}\to 0$, while the $(p-1)$-mass
\eqref{p-1:quant} stays bounded below away from zero.
Remark that the $(p-1)$-mass does not vanish in general for Fading spikes, which makes the problem rather intriguing.
However, in view of the uniqueness result in \cite{BSp}, it is readily seen from the model solutions in section \ref{subs:3.1} that on a disk only
Type I spikes exist. In other words, if \ref{eq:NV p} fails, for $\eps_n$ small there are no solutions at all of
\eqref{eq:blow up eqn} satisfying $v_n|_{\pa\om}=0$ and \eqref{Hypo on p-1} on a disk. This fact suggests that the existence of
Type II spikes could depend by the geometry of the domain, which motivates the following:\\

{\bf Conjecture}. {\it Let $v_n$ be a sequence of solutions of \eqref{eq:blow up eqn} satisfying $v_n|_{\pa\om}=0$ and \eqref{Hypo on p-1}
on a convex domain. If $\frac{\theta_n}{\eps_n^2}\int\limits_{\Omega} [v_n(x)-1]_+^p\dd{x}\to 0$, then for $\eps_n$ small enough there is no
solution of \eqref{eq:blow up eqn}, that is, no Type II spikes exist in this case}.\\

These facts will be discussed in other works.\\

Another interesting open problem is to understand whether or not we could really have multiple spikes of Type I - Type II
clustering at the same point. In fact multiple spikes of Type I can cluster at the same point as shown in \cite{DY}.
This is in sharp contrast with the higher dimensional case where it has been recently proved (see \cite{CM})
that spikes in the sense of \cite{FW} and \cite{W}  are always simple, i.e. one and only
one locally maximizing sequence is attached to each spike point.\\

It could be also interesting describe the set of solutions of the planar equation in problem \eqref{eq:entire eqn} which \un{do not} satisfy the integral
bound therein. A class of ``infinite mass'', one dimensional solutions of this sort is defined in subsection \ref{subs:3.3}.\\

A last comment is needed about the physical meaning of \eqref{H3}, \eqref{H4}. The assumption \eqref{H3} comes out directly from
\eqref{Hypo on p} and \eqref{eq:NV p} and is just constraining the asymptotic behavior of the total current in the Tokamak, which is proportional to
$\int\limits_{\Omega} [v_n(x)-1]_+^p\dd{x}$. It would be rather natural to expect some non trivial spike structure arising from
the control of the corresponding energy, which would require some assumption about $\int\limits_{\Omega} [v_n(x)-1]_+^{p+1}\dd{x}$.
It is rather surprising that instead the seemingly natural assumption at this stage is \eqref{Hypo on p-1}, which yields a control
about $\int\limits_{\Omega} [v_n(x)-1]_+^{p-1}\dd{x}$ whose physical interpretation seems to be unclear. Since
$\mathcal{I}(v)=[v-1]^p_+$ is proportional the current density, we just know that $[v-1]^{p-1}_+$ is proportional to the
variation of $\mathcal{I}(v)$ with respect to $v$.\\

\medskip

This paper is organized as follows. In section \ref{sec2} we discuss some preliminary results together with the classification of solutions of
\eqref{eq:entire eqn} (see Proposition \ref{prop:entire solution}). In section \ref{sec3} we discuss the model profiles of Dancer and Yan and the
one dimensional solutions of \eqref{eq:entire eqn}. In section \ref{sect:blowup max} we prove Theorem \ref{thm:ALT near maximum}, while sections
\ref{sec5} and \ref{sec6} are devoted to the proof of Theorem \ref{thm:ALT many maximum}. In section \ref{sec7} we prove Theorem \ref{thm:last}.

\bigskip

\section{Preliminaries}\label{sec2}

Here we collect some useful facts which will be frequently used later on.

\subsection{Emden solution}

    Let~$B_1(0)\subset\R^2$ denote the unit disk and~$p\in (1,+\infty)$.
    There exists a unique solution (\cite{gnn}) to the Emden equation
    \begin{align}\label{eq:Emden}
    \begin{cases}
         -\Delta\phi=\phi^p, & \mbox{ in } B_1(0), \vspace{2mm} \\
        \phi = 0, & \mbox{ on } \partial B_1(0),
    \end{cases}
    \end{align}
    which is well known (\cite{gnn}) to be radial and radially decreasing.
    Throughout this paper this unique solution we will be just denoted by~$\phi$.
    Furthermore, we denote
    \begin{align}\label{Ip-1}
        I_{p-1}& \coloneqq \int_{B_1(0)}\phi^{p-1}\dd{x}, \\ \label{Ip}
        I_{p} & \coloneqq \int_{B_1(0)} \phi^p \dd{x}=\int_{B_1(0)} -\Delta \phi\dd{x}=-2\pi\phi'(1), \\ \nonumber
        I_{p+1} & \coloneqq \int_{B_1(0)} \phi^{p+1}\dd{x}=\int_{B_1(0)}|\nabla\phi|^2\dd{x},
    \end{align}
    and call them the~$(p+k)$-integrals, for~$k=-1,0,+1$ respectively.
    We list below few useful relations among these quantities:
    \begin{itemize}
        \item[(a)] By a Pohozaev argument, namely, testing the equation against~$x\cdot\nabla\phi$, we see that
        \begin{align*}
            \frac{2}{p+1}I_{p+1}=\pi (\phi'(1))^2,
        \end{align*}
        which, combined with \eqref{Ip} gives that,
        \begin{align*}
            I_{p+1}=\frac{p+1}{8\pi}I_p^2.
        \end{align*}

        \item[(b)] The algebraic relation between~$I_{p-1}$ and~$I_p$ is unclear, but using that~$\max\phi=\phi(0)$ and~$\phi>0$ we have that,
        \begin{align*}
            \frac{I_p}{\phi(0)}<I_{p-1}< \phi(0)^{p-1} \pi.
        \end{align*}
        By using the H\"older inequality, we get another relation:
        \begin{align*}
            I_{p-1}\leq I_p^{1-\frac{1}{p}}\pi^{\frac{1}{p}} = (-2\phi'(1))^{1-\frac{1}{p}} \pi.
        \end{align*}
    \end{itemize}

    \begin{remark}
        Observe that
        \begin{itemize}
            \item[(i)] ~$I_{p+1}$ is equivalent to the Dirichlet energy of the solution;
            \item[(ii)] $I_{p-1}$ controls both~$I_p$ and~$I_{p+1}$ up to multiplication by suitable powers of~$\phi(0)$.
        \end{itemize}
        We will see later that these hold also for general solutions.
    \end{remark}

\subsection{A rigidity result for entire solutions}

    Type I spikes are modeled on the solutions used in \cite{DY}. Concerning this point, we will need the following classification
    result about entire solutions of \eqref{eq:entire eqn}. We could not find a proof of this classification result, which is why we
    provide a sketchy proof, based however on well known ideas first pushed forward in \cite{bm} and \cite{cli2}.
    \begin{proposition}\label{prop:entire solution}
        Let~$w\in L^1_{\rm loc}(\R^2)$ be a distributional solution of
        \begin{align}\label{eq:entire eqn}
            \begin{cases}
               & -\Delta w = [w]_+^p  \mbox{ in } \R^2, \vspace{2mm} \\
            &\int\limits_{\R^2} [w]_+^p \dd{x} \leq C_0<+\infty.
            \end{cases}
        \end{align}
        Then~$w\in C^2(\R^2)$ and~$[w]_+\in L^\infty(\R^2)$.
        Moreover, denoting
        \begin{align*}
            \beta_p\equiv \beta_p(w)\coloneqq \frac{1}{2\pi}\int\limits_{\R^2} [w]_+^p\dd{x},
        \end{align*}
        we have that:
        \begin{itemize}
            \item either~$\beta_p=0$, and then $w$ is a non-positive constant,
            \item or~$\beta_p>0$, and then $w$ is, up to a translation, radial and takes the form,
                    \begin{align}\label{eq:entire solution}
                        w(x)=
                        \begin{cases}
                            \frac{1}{R_p^{\frac{2}{p-1}}}\phi\parenthesis{\frac{x}{R_p}}, & 0\leq |x|\leq R_p, \vspace{2mm} \\
                            \frac{1}{R_p^{\frac{2}{p-1}}}\phi'(1)\log\parenthesis{\frac{|x|}{R_p}}, & |x|>R_p,
                        \end{cases}
                    \end{align}
                    where~$R_p>0$ is uniquely defined in terms of $\beta_p$ as follows,
                    \begin{align}\label{Wpdef}
                        \beta_p
                        =\frac{1}{2\pi} \frac{I_p}{R_p^{\frac{2}{p-1}}}=\frac{-\phi'(1)}{R_p^{\frac{2}{p-1}}}.
                    \end{align}
        \end{itemize}
    \end{proposition}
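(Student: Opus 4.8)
The plan is to follow the classical concentration/moving-plane scheme for the planar equation $-\Delta w=[w]_+^p$ with a finite $p$-mass. First I would establish the preliminary regularity: since $[w]_+^p\in L^1(\R^2)$, a standard elliptic estimate (Brezis--Merle type) shows that $w$ is locally bounded above, hence $[w]_+^p\in L^\infty_{\rm loc}$, and then elliptic regularity bootstraps to $w\in C^2(\R^2)$; the global bound $[w]_+\in L^\infty(\R^2)$ follows because around any point $x_0$ with $w(x_0)$ large the equation on $B_1(x_0)$ forces a fixed amount of $p$-mass to accumulate, contradicting $\int_{\R^2}[w]_+^p\le C_0$ if $\sup w=+\infty$. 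If $\beta_p=0$ then $[w]_+\equiv 0$, so $w\le 0$ and $-\Delta w=0$; a nonpositive entire harmonic function on $\R^2$ is constant by Liouville, giving the first alternative.

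Now assume $\beta_p>0$. Let $\Omega_+=\{w>0\}$, a nonempty open set. I would first argue $\Omega_+$ is bounded: write $w=N+h$ where $N(x)=\tfrac1{2\pi}\int_{\R^2}\log|x-y|\,[w(y)]_+^p\,dy$ is the Newtonian-type potential of the (finite) measure $[w]_+^p\,dx$ and $h$ is harmonic on $\R^2$; a growth estimate on $N$ together with the a priori bound $[w]_+\in L^\infty$ and the finite mass shows $w(x)\le c+\beta_p\log|x|$ for large $|x|$ up to adjusting $h$, but since the total mass must be finite and $w$ is bounded above, $h$ must be a constant $c_0$ and one gets $w(x)=\beta_p\log|x| + c_0 + o(1)$ as $|x|\to\infty$ with $\beta_p>0$ forcing... — actually the right normalization is the opposite sign: $-\Delta w=[w]_+^p\ge 0$ so $w$ is superharmonic, hence $w(x)\to -\infty$ like $-\beta_p\log|x|$ (this is the standard fact for nonnegative-right-hand-side planar equations with finite mass, cf.\ \cite{bm}), which in particular gives $\Omega_+$ bounded and $w\to 0^-$ only if $c_0$ is tuned; in any case $[w]_+$ has compact support. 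The key structural step is then to show $\Omega_+$ is a round disk and $w$ is radial about its center. For this I would use the method of moving planes à la Gidas--Ni--Nirenberg / Chen--Li (\cite{gnn,cli2}): the asymptotic expansion $w(x)=-\beta_p\log|x|+c_1+O(|x|^{-1})$ is rotationally symmetric to leading order, which (after the Kelvin-type setup used for planar problems, or directly via the moving plane method exploiting the decay) forces $w$ to be radially symmetric and strictly radially decreasing about some point, which we take to be the origin. Then $\Omega_+=B_{R_p}(0)$ for some $R_p>0$.

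On $B_{R_p}(0)$ the function $w$ solves $-\Delta w=w^p$, $w>0$, $w=0$ on $\partial B_{R_p}$; by the uniqueness of the positive solution of the Emden problem on a ball (\cite{gnn}) and the scaling $w\mapsto R_p^{-2/(p-1)}\phi(\cdot/R_p)$, this is exactly the first branch of \eqref{eq:entire solution}. Outside, $w$ is radial harmonic and negative on $|x|>R_p$ with $w=0$ and $\partial_\nu w = R_p^{-2/(p-1)-1}\phi'(1)$ on $|x|=R_p$, so $w(x)=R_p^{-2/(p-1)}\phi'(1)\log(|x|/R_p)$ by solving the radial ODE and matching the Cauchy data at $r=R_p$; consistency of the matching (continuity of $w$ and $\nabla w$ across $\partial B_{R_p}$) is automatic since $\phi$ itself satisfies $\phi(1)=0$. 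Finally, integrating the equation, $2\pi\beta_p=\int_{\R^2}[w]_+^p=\int_{B_{R_p}}(-\Delta w)=-\int_{\partial B_{R_p}}\partial_\nu w = R_p^{-2/(p-1)}I_p = -2\pi R_p^{-2/(p-1)}\phi'(1)$, which is precisely \eqref{Wpdef} and determines $R_p$ uniquely from $\beta_p$.

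I expect the main obstacle to be the symmetry step: unlike the classical bounded-domain or $\R^d$ ($d\ge3$) cases, in $\R^2$ the equation is not scale-critical and there is no conformal Kelvin transform sending infinity to a regular point, so the moving-plane argument must be run directly on $\R^2$ using only the logarithmic asymptotics $w(x)=-\beta_p\log|x|+O(1)$ and its gradient estimate; getting enough decay of $\nabla(w+\beta_p\log|x|)$ to start the planes from infinity, and to close the argument, is the delicate point, and is exactly where the ideas of \cite{bm} and \cite{cli2} enter. A secondary technical point is justifying that a distributional $L^1_{\rm loc}$ solution is genuinely classical and that the finite-mass hypothesis alone (without assuming $w\ge 0$ at infinity) already forces $[w]_+$ to have compact support.
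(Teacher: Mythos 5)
Your proposal follows essentially the same route as the paper: Brezis--Merle-type decomposition for local regularity, Liouville's theorem in the case $\beta_p=0$, and for $\beta_p>0$ a Green-representation argument yielding the logarithmic asymptotics $w(x)=-\beta_p\ln|x|+O(1)$, followed by moving planes \`a la Chen--Li \cite{cli2} to obtain radial symmetry and the uniqueness of the Emden solution \cite{gnn} to pin down the profile, with the final Gauss-divergence computation giving \eqref{Wpdef}. Two small remarks: the sign slip you catch mid-argument (the superharmonicity forces $w\sim-\beta_p\ln|x|$, not $+\beta_p\ln|x|$) is exactly right once corrected; and the global bound $[w]_+\in L^\infty(\R^2)$ does not require your extra ``mass-accumulation near a large value'' argument --- it already falls out of the decomposition, because the Stampacchia/mean-value estimates on $B_{r}(x_0)$ depend only on $C_0$, $r$ and $p$ and are uniform in $x_0$, which is how the paper obtains it.
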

The solution with~$R_p=1$ will be denoted as~$w^*$, namely,
    \begin{align}
        w^*(x)=\begin{cases}\label{wstar}
            \phi(x), & 0\leq |x|\leq 1, \\
            \phi'(1)\log(|x|), & |x|\geq 1.
        \end{cases}
    \end{align}

    \begin{proof}[Proof of Proposition \ref{prop:entire solution}]

    {\bf Step 1: Regularity of weak solutions}. We have the following
    \begin{lemma}\label{lemma:regularity of entire solution}
        Let~$w\in L^1_{loc}(\R^2)$ be a distributional solution of~\eqref{eq:entire eqn}.
        Then $w\in C^2(\R^2)$ and $[w]_+\in L^\infty(\R^2)$.
    \end{lemma}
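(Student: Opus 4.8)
The plan is a two‑part argument: a regularity bootstrap producing $w\in C^2(\R^2)$, followed by a scale‑invariant blow‑up argument producing the global bound $[w]_+\in L^\infty(\R^2)$.

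\medskip

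\emph{Step 1: bootstrap to $C^2$.} Set $f:=[w]_+^p$, so $f\in L^1(\R^2)$ and $-\Delta w=f$ in the distributional sense. On an arbitrary disk $B_R=B_R(x_0)$ write $w=w_0+h$, where $-\Delta w_0=f$ in $B_R$ with $w_0=0$ on $\partial B_R$, and $h$ is harmonic in $B_R$. By the Brezis--Merle inequality (\cite{bm}) one has $\exp(\alpha|w_0|)\in L^1(B_R)$ whenever $\alpha<4\pi/\|f\|_{L^1(B_R)}$, hence $w_0\in L^q(B_R)$ for every $q<\infty$; since $h=w-w_0\in L^1_{\mathrm{loc}}(B_R)$ is harmonic it is locally bounded, so $w\in L^q_{\mathrm{loc}}(\R^2)$ for all $q<\infty$ and therefore $f=[w]_+^p\in L^q_{\mathrm{loc}}(\R^2)$ for all $q<\infty$. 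Calder\'on--Zygmund interior estimates give $w\in W^{2,q}_{\mathrm{loc}}(\R^2)$ for all $q$, and Sobolev embedding in dimension two yields $w\in C^{1,\gamma}_{\mathrm{loc}}(\R^2)$ for every $\gamma\in(0,1)$. Since $s\mapsto[s]_+^p$ is locally Lipschitz on $\R$ for $p>1$, the function $f=[w]_+^p$ is then locally Lipschitz, hence locally H\"older, and Schauder estimates upgrade $w$ to $C^{2,\gamma}_{\mathrm{loc}}(\R^2)\subset C^2(\R^2)$; in particular $[w]_+$ is continuous.

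\medskip

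\emph{Step 2: global $L^\infty$ bound.} Suppose, for contradiction, that $\sup_{\R^2}[w]_+=+\infty$. By a standard concentration‑point selection (of Gidas--Spruck/Schoen type) one produces points $x_j$ and radii $d_j>0$ such that, with $L_j:=w(x_j)\to+\infty$, one has $[w]_+\le C_* L_j$ on $B_{d_j}(x_j)$ for a dimensional constant $C_*$ and $d_j L_j^{(p-1)/2}\to+\infty$. Exploiting the scale invariance of the equation, set $\rho_j:=L_j^{-(p-1)/2}$ and $w_j(y):=\rho_j^{2/(p-1)}w(x_j+\rho_j y)=L_j^{-1}w(x_j+\rho_j y)$; then $-\Delta w_j=[w_j]_+^p$ in $\R^2$, $w_j(0)=1$, $[w_j]_+\le C_*$ on $B_{R_j}(0)$ with $R_j:=d_j/\rho_j=d_j L_j^{(p-1)/2}\to+\infty$, and — crucially, because of the subcritical balance between the $p$‑ and $(p-1)$‑powers — for every fixed $R$,
\[
\int_{B_R(0)}[w_j]_+^p\,\dd x=\rho_j^{2/(p-1)}\int_{B_{\rho_j R}(x_j)}[w]_+^p\,\dd x\le L_j^{-1}C_0\to 0 .
\]
Since $[w_j]_+^p\le C_*^p$ on $B_R$ and its $L^1$ norm tends to $0$, we get $[w_j]_+^p\to 0$ in $L^2(B_R)$. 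Splitting off the Newtonian potential of $[w_j]_+^p$ in $B_R$ (which tends to $0$ in $C^0(B_{R/2})$ by elliptic estimates) and applying the Harnack inequality and the Liouville theorem to the remaining harmonic parts (which are bounded above on larger and larger disks), one concludes that $w_j\to 1$ locally uniformly in $\R^2$. But then $[w_j]_+^p\to 1$ locally uniformly, forcing $\int_{B_R(0)}[w_j]_+^p\,\dd x\to\pi R^2>0$, contradicting the display above. Hence $[w]_+\in L^\infty(\R^2)$.

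\medskip

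\emph{Obstacles.} The bootstrap of Step~1 is routine once the Brezis--Merle exponential integrability is invoked to pass from the merely $L^1$ datum to $L^q$ for all $q$; the only minor care needed is the local Lipschitz dependence of $[s]_+^p$ in order to reach Schauder. The substantive point is Step~2: the finite‑mass hypothesis does not localize, so one cannot simply sum $\varepsilon$‑regularity estimates, and it is the scale invariance of the equation — which admits no nontrivial scale‑invariant finite‑mass solution — that prevents $[w]_+$ from escaping to $+\infty$. I expect the bookkeeping in the selection of $(x_j,d_j)$ and the verification of $d_j L_j^{(p-1)/2}\to+\infty$ to be the fiddliest part to write cleanly, and the Harnack/Liouville passage to the limit $w_j\to 1$ to be where one must be careful about the sign (the harmonic parts are controlled only from above).
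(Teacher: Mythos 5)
Your proposal is correct, but it takes a genuinely different route from the paper, and the difference is worth spelling out.

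For the interior regularity (your Step~1), both you and the paper decompose $w$ on a ball into a Dirichlet part with the $L^1$ datum $[w]_+^p$ plus a harmonic remainder, and then bootstrap. The only distinction is the tool used to extract $L^q$ integrability of the Dirichlet part from an $L^1$ right-hand side: the paper quotes Stampacchia's $L^1\to W_0^{1,t}$ estimate ($t<2$) followed by Sobolev, while you invoke the Brezis--Merle exponential estimate. Both are legitimate and essentially equivalent for this purpose; the paper's choice has the slight advantage of feeding directly into the bootstrap without the exponential detour.

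The substantive divergence is in how you obtain the \emph{global} bound $[w]_+\in L^\infty(\R^2)$. You run a blow-up contradiction: a doubling/concentration selection of points $x_j$, the rescaling $w_j(y)=L_j^{-1}w(x_j+\rho_j y)$ with $\rho_j=L_j^{-(p-1)/2}$, the observation that the $p$-mass of $w_j$ over a fixed ball scales like $L_j^{-1}$ and so tends to zero, a Harnack/Liouville passage to the limit $w_j\to 1$, and the resulting contradiction with the vanishing $p$-mass. This is valid (and, as you note, the selection of $(x_j,d_j)$ with $d_j L_j^{(p-1)/2}\to\infty$ requires a properly set-up doubling lemma, e.g.\ Pol\'a\v{c}ik--Quittner--Souplet on expanding balls $B_{R_j}(z_j)$), but it is heavier machinery than what is needed. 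The paper's route is simply to observe that the constants in the local estimate are \emph{translation-invariant}: the Stampacchia bound on $\|w_1\|_{W^{1,t}(B_{2r}(x_0))}$ depends only on $r$ and on the global mass $C_0=\int_{\R^2}[w]_+^p$, and the mean-value bound $\|[w_2]_+\|_{L^\infty(B_r(x_0))}\le C(C_0,r,p)$ likewise depends only on $C_0$ and $r$. Hence the local $L^\infty$ bound $\|[w]_+\|_{L^\infty(B_{r/2}(x_0))}\le C(C_0,r,p)$ is uniform in $x_0$ and is therefore already global, with an explicit constant. In short: the paper extracts the global bound for free from the uniformity of the local one, whereas you prove it by contradiction via scaling and Liouville. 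Your argument buys nothing extra here (it yields finiteness but no explicit constant), though it is a robust technique that would survive mild perturbations of the equation where the translation-invariance trick does not apply verbatim.

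One small point of care in your Step~1: for $1<p<2$ the map $s\mapsto [s]_+^p$ is $C^1$ but not $C^{1,1}$; what you actually use is that it is $C^1$ (hence locally Lipschitz), so $[w]_+^p\in C^{0,\gamma}_{\mathrm{loc}}$ once $w\in C^{1,\gamma}_{\mathrm{loc}}$, and Schauder applies. This is exactly the point the paper makes as well.
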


    \begin{proof}[Proof of Lemma~\ref{lemma:regularity of entire solution}]
        We argue as in~\cite{bm} by a suitable decomposition of $w$. Let $x_0\in\R^2$, $r>0$
        and write $w=w_1+w_2$ in $B_{2r}(x_0)$ where,
        \begin{align*}
            \begin{cases}
                -\Delta w_1=[w]_+^p & \mbox{ in } B_{2r}(x_0), \\
                w_1=0 & \mbox{ on } \partial B_{2r}(x_0),
            \end{cases}
            \qquad \mbox{ and } \qquad
            \begin{cases}
                -\Delta w_2= 0 & \mbox{ in } B_{2r}(x_0), \\
                w_1=w & \mbox{ on } \partial B_{2r}(x_0).
            \end{cases}
        \end{align*}
        Since $[w]_+^p\in L^1(\R^2)$, then according to \cite{stam} we have that $w_1\in W_0^{1,t}(B_{2r}(x_0))$ for any $t\in [1,2)$, with
        \begin{align*}
            \|w_1\|_{W^{1,t}(B_{2r}(x_0))} \leq C(t, C_0).
        \end{align*}
        Since we are in dimension $d=2$, by the Sobolev embedding we have $w_1\in L^q(B_{2r}(x_0))$ for any $1\leq q<+\infty$ with
        \begin{align*}
            \|w_1\|_{L^q(B_{2r}(x_0))}\leq C(q, C_0).
        \end{align*}
        Moreover, by the maximum principle $w_1\geq 0$ in $B_{2r}(x_0)$. By applying the mean value theorem to the harmonic part
        $w_2=w-w_1(\leq w)$, for any $x_1\in B_r(x_0)$ we have that,
        \begin{align*}
            [w_2(x_1)]_+ \leq \fint\limits_{B_{r}(x_1)} [w]_+ \dd{x} \leq \frac{1}{\pi r^2}\parenthesis{\int\limits_{B_{r}(x_1)} [w]_+^p\dd{x}}^{\frac{1}{p}} |B_r(x_1)|^{1-\frac{1}{p}}
            \leq \frac{ C_0^{1/p}}{\pi r^{1+\frac{1}{p}}}.
        \end{align*}
        Thus~$\|[w_2]_+\|_{L^\infty(B_r(x_0))} \leq  C_0^{1/p}(\pi r^{1+\frac{1}{p}})^{-1}$.

        As a consequence,~$[w]_+\leq [w_1]_+ + [w_2]_+ \in L^q(B_r(x_0))$ for any~$q\in [1,+\infty)$, with
        \begin{align*}
            \|[w]_+\|_{L^q(B_r(x_0))} \leq C(q,C_0,r).
        \end{align*}
        Now $-\Delta w_1=[w]_+^p$ has better integrability properties.
        An iteration of the splitting argument shows that $w_1\in L^\infty(B_{r/2}(x_0))$ and so is $[w]_+$, with
        \begin{align*}
            \|[w]_+\|_{L^\infty(B_{r/2}(x_0))} \leq C(C_0,r,p).
        \end{align*}

        Noting that $t\mapsto [t]_+^p$ is a~$C^1$ function (recall $p>1$), by a bootstrap argument we conclude that
        $w\in C^{2,\gamma}(\R^2)$ for any~$\gamma\in (0,1)$.
    \end{proof}

    Consequently, all the distributional solutions are actually classical solutions.

    \

    %%%%%%%%%%%%%%%%%
    {\bf Step 2: The case $\beta_p=0$}.
    If $\beta_p=0$, then $w$ is non-positive and harmonic, which is necessarily constant by Liouville theorem.

    \

    %%%%%%%%%%%%%%%%

    {\bf Step 3. The case of $\beta_p>0$}.
    We first use the argument, based on a Green representation formula, adopted in \cite{cli2} to come up with the following decay estimate:
    there exists a constant~$C>0$ such that
    \begin{align*}
        -\beta_p \ln(1+|x|) -C \leq w(x) \leq -\beta_p \ln(1+|x|) +C, \qquad \forall x\in\R^2.
    \end{align*}
    In particular, as in \cite{cli2} we have that,
    \begin{align}\label{eq:decay of w}
        \frac{w(x)}{\ln|x|} \to -\beta_p, \quad \mbox{ uniformly as } |x|\to+\infty,
    \end{align}
    and, in polar coordinates $(r,\theta)$ for $\R^2$,
    \begin{align*}
        r\frac{\partial w}{\partial w}\to -\beta_p, & & \frac{\partial w}{\partial \theta}\to 0, & & \mbox{ as } r\to+\infty.
    \end{align*}
    At this point, by using a moving plane argument as in \cite{cli2} we conclude that $w$ is radial with respect to some point and is radially decreasing.
    Thus, after a translation if necessary, we may assume that $w$ is radial with respect to the origin and that there exists a unique $R>0$ such that
    \begin{align*}
        \braces{x\in\R^2\mid w(x)>0} = B_R(0)\subset \R^2.
    \end{align*}
    By the uniqueness of the positive solutions in a ball of the Emden equation, we conclude that
    \begin{align*}
        w(x)=\frac{1}{R^{\frac{2}{p-1}}} \phi(\frac{x}{R}), \quad \mbox{ for } |x|\leq R,
    \end{align*}
    where we recall that $\phi$ is the unique solution of \eqref{eq:Emden}. In the outer domain $\R^2\setminus B_R(0)$,
    $w$ is harmonic with the decay shown in \eqref{eq:decay of w}, whence, by standard ODE theory, it
    necessarily takes the form,
    \begin{align*}
        w(x)=\frac{1}{R^{\frac{2}{p-1}}}\phi{'}(1)\ln\frac{|x|}{R},\qquad \forall |x|\geq R,
    \end{align*}
    the coefficients being uniquely defined by the $C^1$ continuity at the free boundary $\braces{w=0}$.\\
    Since all the argument adopted along the proof work exactly as they stand in \cite{cli2}, we will not provide the details here to avoid repetitions.
    \end{proof}

\bigskip

    \begin{remark}
        For the non-constant entire solution defined in \eqref{eq:entire solution}, we have seen that to each $w$ there correspond a
        unique~$R_p>0$, so that we may write,
        \begin{align*}
            \beta_p(R_p)= \frac{1}{2\pi}\int\limits_{\R^2} [w]_+^p\dd{x}
            =\frac{1}{2\pi} \frac{I_p}{R_p^{\frac{2}{p-1}}}=\frac{-\phi'(1)}{R_p^{\frac{2}{p-1}}}.
        \end{align*}
        Moreover, by an evaluation based on Pohozaev identity we have that,
        \begin{align*}
            \beta_{p+1}(R_p)\equiv \frac{1}{2\pi}\int\limits_{\R^2} [w]_+^{p+1}\dd{x}
            =\frac{(p+1)}{4}\beta_p^2.
        \end{align*}
        Another relevant quantity is,
        \begin{align*}
           \beta_{p-1}(R_p)\coloneqq \frac{1}{2\pi}\int\limits_{\R^2} [w]_+^{p-1}\dd{x}
        \end{align*}
        for which we again have no algebraic evaluation but only the direct estimate
        \begin{align*}
            \frac{\beta_p}{\sup w}<\beta_{p-1} < \frac{1}{2\pi}(\sup w)^{p-1}|\Omega|, & &
           \beta_{p-1}\leq \beta_p^{1-\frac{1}{p}} \parenthesis{\frac{|\Omega|}{2\pi}}^{\frac{1}{p}}.
        \end{align*}
        Note also that~$\beta_{p-1}, \beta_p, \beta_{p+1}$ are either simultaneously positive or vanishing.
        They have different roles in the analysis of the solutions:~$\beta_p$ describes the decaying rate at infinity as
        the above result shows, ~$\beta_{p+1}$ is equivalent to the Dirichlet energy while $\beta_{p-1}$, according to~\eqref{eq:entire solution},
        satisfies,
        \begin{align}\label{bquant}
           \beta_{p-1}(R_p)=\frac{1}{2\pi}I_{p-1},
        \end{align}
        which is a constant independent of~$R_p$.
    \end{remark}

    % \begin{rmk}
    %     Note that~$\beta_p(w)$ is not a universal constant, which also depends on the solution~$w$, and so is~$R_p$.
    %     That is, the~$p$-th mass of the entire solution is unfortunately not quantized.
    %     This is the root of the analytical problem for the subsequent asymptotic analysis.
    % \end{rmk}

For later purposes we remark that the equation,
    \begin{align}\label{rinv1}
        -\Delta  w= [w]_+^p
    \end{align}
    is invariant under the following transformations:
    \begin{align}\label{rinv2}
        {\mathcal{R}}_t w(x) \coloneq t^{\frac{2}{p-1}}w(tx)
    \end{align}
    for any~$t >0$.
    The masses in different scales transform in the following way:
    \begin{align}\label{rinv3}
       \beta_{p-1}({\mathcal{R}}_t w) =\beta_{p-1}(w), & &
        \beta_p ({\mathcal{R}}_t w) = t^{\frac{2}{p-1}} \beta_{p}(w), & &
        \beta_{p+1} ({\mathcal{R}}_t w) = t^{\frac{4}{p-1}} \beta_{p+1}(w).
    \end{align}
    In particular, the decaying rate $\beta_p(w)$ (see \eqref{eq:entire solution} and \eqref{Wpdef}) in Proposition~\ref{prop:entire solution} is
    not quantized and can be any positive number, unlike $\beta_{p-1}$ which always assume the value shown in \eqref{bquant}.

\bigskip

\section{Model solutions}\label{sec3}
We collect some facts about \eqref{eq:v form} and discuss the relation with the Dancer-Yan profiles.

\begin{remark}
    Note that in case~$\lambda\leq C|\alpha|$ for some~$C<+\infty$, then
    \begin{align*}
        \int_{\Omega} [v-1]_+^p=\frac{1}{|\alpha|^p}\leq \frac{C}{|\alpha|^{p-1}\lambda}=C\varepsilon^2.
    \end{align*}
\end{remark}

\

In particular, since we are in dimension $d=2$, problem \eqref{eq:v form} admits the following symmetry property: for each~$t>0$ and~$x_0\in\R^2$, the function
\begin{align*}
    \mathcal{S}_t v(y)\coloneqq t^{\frac{2}{p-1}} \parenthesis{ v(x_0+ ty) -1 }+1, \qquad \forall y\in \Omega(x_0,t)\coloneqq \frac{\Omega -x_0}{t},
\end{align*}
satisfies the equation
\begin{align*}
        -\varepsilon^2\Delta \parenthesis{\mathcal{S}_t v} = [\mathcal{S}_t v -1]_+^p \quad \mbox{ in }\quad  \Omega(x_0,t)
\end{align*}
and the corresponding integral for~$\mathcal{S}_t v$ becomes
\begin{align*}
    \int\limits_{\Omega(x_0,t)} [\mathcal{S}_t v -1]_+^{p+k} \dd y
    = t^{\frac{2(k+1)}{p-1}}\int\limits_{\Omega} [v-1]_+^{p+k} \dd x
    =\frac{t^{\frac{2(k+1)}{p-1}}}{|\alpha|^p}, \qquad k=-1,0,1.
\end{align*}
In particular, note that for~$k=-1$, the~$(p-1)$-mass is invariant under the $\mathcal{S}_t$-transform.

\subsection{The Dancer-Yan model profiles}\label{subs:3.1}
Recall that $\phi$ denotes the unique solution of the Emden equation \eqref{eq:Emden}.
Let~$R_2=\frac{1}{\sqrt{\pi}}$ and~$\Omega=B_{R_2}(0)$ so that~$|\Omega|=1$, and let~$b\leq a$.
Then the function
\begin{align*}
    U_{\varepsilon,a,b}(x)\coloneqq
    \begin{cases}
        a+\parenthesis{\frac{\varepsilon}{s_\varepsilon}}^{\frac{2}{p-1}}\phi\parenthesis{\frac{x}{s_\varepsilon}}, & 0\leq |x|\leq s_\varepsilon, \vspace{2mm} \\
        a+ (a-b) \frac{\ln(|x|/s_\varepsilon)}{\ln(\sqrt{\pi}s_\varepsilon)}, & s_\varepsilon\leq |x|\leq R_2=\frac{1}{\sqrt{\pi}}
    \end{cases}
\end{align*}
satisfies
\begin{align*}
    \begin{cases}
        -\Delta U_{\varepsilon,a,b} = \frac{1}{\varepsilon^2 }[U_{\varepsilon,a}-a]_+^p, & \mbox{ in } B_{R_2}, \vspace{2mm} \\
        \qquad \; U_{\varepsilon,a,b}=b, & \mbox{ on } \partial B_{R_2}(0).
    \end{cases}
\end{align*}
Here the~$s_\varepsilon$ is uniquely defined by imposing the~$C^1$ continuity of the solution:
\begin{align}\label{eq:C1-eps}
    \parenthesis{\frac{\varepsilon}{s_\varepsilon}}^{\frac{2}{p-1}} \phi^{'}(1) = \frac{a-b}{\ln(\sqrt{\pi}s_\varepsilon)}.
\end{align}
These solutions was first used in the context of singularly perturbed problems by Dancer and Yan in \cite{DY}.
Note that~$\lim\limits_{\varepsilon\to 0} s_\varepsilon=0$. Of course, we are interested in the case~$a=1$ and~$b=0$.
In particular,~$[U_{\varepsilon,a,b}-a]_+ \to 0$ uniformly as~$\varepsilon\to 0$, while
\begin{align*}
    \lim_{\varepsilon\to0+0}U_{\varepsilon,a,b}(x)
    =\begin{cases}
        a,& x= 0, \vspace{2mm}\\
        b,& x\neq 0.
    \end{cases}
\end{align*}

Observe that,
\begin{align*}
    \int\limits_{B_{R_2}} \frac{1}{\varepsilon^2 }[U_{\varepsilon,a,b}-a]_+^p
    =&\frac{s_\varepsilon^2}{\varepsilon^2} \int_{B_{s_\varepsilon}} \parenthesis{\frac{\varepsilon}{s_\varepsilon}}^{\frac{2p}{p-1}} \phi\parenthesis{\frac{y}{s_\varepsilon}}^p \frac{\dd{y}}{s_\varepsilon^2}
    =\parenthesis{\frac{\varepsilon}{s_\varepsilon}}^{\frac{2}{p-1}} \parenthesis{\int\limits_{B_1} \phi^p\dd{x}} \\
    =& \frac{a-b}{\phi{'}(1)\ln(\sqrt{\pi}s_\varepsilon)} I_p \to 0 \quad \mbox{ as } \varepsilon\to 0.
\end{align*}
Meanwhile for any~$t>1$,
\begin{align*}
    \int\limits_{B_{R_2}} \parenthesis{\frac{1}{\varepsilon^2 }[U_{\varepsilon,a,b}-a]_+^p}^t \dd y
    =&\frac{s_\varepsilon^2}{\varepsilon^{2t}} \int_{B_{s_\varepsilon}} \parenthesis{\frac{\varepsilon}{s_\varepsilon}}^{\frac{2p}{p-1}t} \phi\parenthesis{\frac{y}{s_\varepsilon}}^{pt} \frac{\dd{y}}{s_\varepsilon^2} \\
    =&\parenthesis{\frac{\varepsilon}{s_\varepsilon}}^{\frac{2}{p-1}t}\frac{1}{s_\varepsilon^{2(t-1)}} \parenthesis{\int\limits_{B_1} \phi^{pt}\dd x} \\
    =&\parenthesis{\frac{a-b}{\phi'(1)\ln(\sqrt{\pi}s_\varepsilon)}}^t \frac{1}{s_\varepsilon^{2(t-1)}} I_{pt}
      \to +\infty \quad \mbox{ as } \varepsilon\to 0,
\end{align*}
and
\begin{align*}
    \int\limits_{B_{R_2}} \frac{1}{\eps^2} \parenthesis{[U_{\varepsilon,a,b}-a]_+^p}^t \dd y
    =&\frac{s_\varepsilon^2}{\varepsilon^{2}} \int_{B_{s_\varepsilon}} \parenthesis{\frac{\varepsilon}{s_\varepsilon}}^{\frac{2p}{p-1}t} \phi\parenthesis{\frac{y}{s_\varepsilon}}^{pt} \frac{\dd{y}}{s_\varepsilon^2} \\
    =&\parenthesis{\frac{\varepsilon}{s_\varepsilon}}^{\frac{2pt}{p-1}-2} \int\limits_{B_1} \phi^{pt}\dd x \\
    =&\parenthesis{\frac{a-b}{\phi{'}(1)\ln(\sqrt{\pi}s_\varepsilon)}}^{p(t-1)+1}I_{pt}.
\end{align*}
Note that for~$t=\frac{p-1}{p}$, we have that
\begin{align*}
    \int\limits_{B_{R_2}} \frac{1}{\eps^2} [U_{\varepsilon,a,b}-1]_+^{p-1}\dd y
    = I_{p-1}>0
\end{align*}
which is a universal constant for any~$\varepsilon>0$.
This will be relevant for later developments.
On the other side, if~$t>\frac{p-1}{p}$, then
\begin{align*}
    \int\limits_{B_{R_2}} \frac{1}{\eps^2} \parenthesis{[U_{\varepsilon,a,b}-a]_+^p}^t \dd y
    =\parenthesis{\frac{a-b}{\phi{'}(1)\ln(\sqrt{\pi}s_\varepsilon)}}^{p(t-\frac{p-1}{p})}I_{pt}\to 0, \quad \mbox{ as } \varepsilon\to 0.
\end{align*}

\

Concerning the integral constraint,
\begin{align}\label{eq:integral-eps}
    \frac{1}{|\alpha|^p} = \int\limits_{B_{R_2}} [U_{\varepsilon,a,b} -a ]_+^p
    =&\varepsilon^2\parenthesis{\frac{\varepsilon}{s_\varepsilon}}^{\frac{2}{p-1}} I_p \\ \nonumber
    =&\parenthesis{\frac{\varepsilon}{s_\varepsilon}}^{\frac{2}{p-1}p} s_\varepsilon^2  I_p \\  \nonumber
    =&\parenthesis{\frac{a-b}{\phi{'}(1)\ln(\sqrt{\pi}s_\varepsilon)}}^p s_\varepsilon^2 I_p,
\end{align}
which uniquely defines as well the value of~$\varepsilon$ and hence also of $s_\varepsilon$.

Combining~\eqref{eq:C1-eps} and~\eqref{eq:integral-eps}, we see that,
\begin{align*}
    \parenthesis{\frac{\varepsilon}{s_{\varepsilon}}}^{\frac{2}{p-1}}=\frac{a-b}{\phi{'}(1)\ln(\sqrt{\pi}s_\varepsilon)}
    =\parenthesis{\frac{1}{|\alpha|^p I_p s_\varepsilon^2}}^{\frac{1}{p}},
\end{align*}
and consequently,
\begin{align*}
    s_\varepsilon= \varepsilon^p |\alpha|^{\frac{p(p-1)}{2}} I_p^{\frac{p-1}{2}}
    =\lambda^{-\frac{p}{2}} I_p^{p-1}.
\end{align*}

In this model case, to recover a Dancer-Yan spike at the origin, we should make the following rescaling:
as~$\varepsilon_n\to 0$ and~$\lambda_n\to +\infty$, taking~$x_n=0$, and~$v_n= U_{\varepsilon_n, 1,0}$ as a sequence of solutions in~$B_{R_2}(0)$, then
\begin{align}\label{eq:rescale-2d model}
    \widetilde{v}_n(y)\coloneqq {\phi{'}(1)\ln(\sqrt{\pi}s_{\varepsilon_n})}\parenthesis{v_n(x_n+ s_{\varepsilon_n}y) -1 },
\end{align}
are defined in the domains
\begin{align*}
    \Omega_n \coloneqq \frac{\Omega -x_n}{s_{\varepsilon_n}}= B_{\sqrt{\pi}/s_{\varepsilon_n}} \nearrow \R^2.
\end{align*}
In particular $\widetilde{v}_n\to  w^*$ locally uniformly, where $w^*$ is defined in \eqref{wstar}.

\bigskip

\subsection{1D solutions}\label{subs:3.3}
In this section we construct solutions of the equation in \eqref{eq:entire eqn} which do not satisfy the integral
bound, that is, a class of ``infinite mass'' solutions of that equation. For any fixed $a>0$, let~$u(t)$ be the unique solution of
\begin{align}\label{eq:1D eqn}
    \begin{cases}
        -u''(t)= [u(t)]_+^p, \qquad \mbox{ for } t\geq 0, \vspace{2mm}\\
        u(0)=a, \quad u'(0)=0.
    \end{cases}
\end{align}
Thus $u(t)$ is non-increasing and consequently there exists a unique~$t_0\in(0,+\infty]$ such that~$u(t)>0$ for~$t\in [0,t_0)$. Therefore we have,
\begin{align*}
    -u''=u^p, \qquad \mbox{ in } (0,t_0).
\end{align*}

Multiplying both sides of the equation by $u'$ and integrating from~$0$ to~$t(<t_0)$, we have that,
\begin{align*}
    u'(t)^2 = \frac{2}{p+1}(a^{p+1}-u(t)^{p+1}).
\end{align*}
Since~$u'<0$, we also have that
\begin{align*}
    \frac{\dd u}{\sqrt{a^{p+1}- u^{p+1}}}= -\sqrt{\frac{2}{p+1}}\dd t,
\end{align*}
that is, $u(t)$ is implicitly defined for $t\in [0,t_0]$ as follows,
\begin{align*}
    \int_u^a \frac{\dd s}{\sqrt{a^{p+1}-s^{p+1}}} = \sqrt{\frac{2}{p+1}} \cdot t \, .
\end{align*}
In particular,~$t_0$ satisfies
\begin{align*}
    \int_0^a \frac{\dd s}{\sqrt{a^{p+1}-s^{p+1}}} = \sqrt{\frac{2}{p+1}} \cdot t_0
\end{align*}
and~$t_0$ is finite since~$p>1$.
For~$t\geq t_0$,~$u(t)$ is linear and since~$u'(t_0)=-\sqrt{\frac{2}{p+1}}a^{\frac{p+1}{2}}$, we see that
\begin{align*}
    u(t)= -\sqrt{\frac{2}{p+1}}a^{\frac{p+1}{2}}(t-t_0), \qquad \forall t\geq t_0.
\end{align*}

Next we extend $u$ to the whole real line by even reflection: $u(-t)\coloneqq u(t)$ for any~$t\geq 0$. This extended~$u$ is~$C^2$ and solves~\eqref{eq:1D eqn}.

Finally, set~$w^{(1D)}(x^1, x^2)\coloneqq u(x^1)$.
Then~$w^{(1D)}$ solves~\eqref{eq:entire eqn} and has infinite~$(p+k)$-mass, for~$k=-1,0,1$ and
the plasma region of such solution is unbounded.
We rule out these solutions by assuming \eqref{Hypo on p-1} in Theorem \ref{thm:ALT near maximum}.\\
Actually, for any fixed $\alpha\in [0,2\pi]$,
\begin{align*}
    w^{(1D)}_{\alpha}(x^1,x^2)\coloneqq u(x^1\cos\alpha+x^2\sin\alpha),
\end{align*}
is a family of one-dimensional infinite mass solutions.

\bigskip

\section{Blow up analysis along interior maximum}\label{sect:blowup max}
First of all we prove that, unlike the case $d\geq 3$ (\cite{BJW}), the natural integral bound
\eqref{Hypo on p-1} implies that $[v_n-1]_+ \to 0$ locally uniformly in~$\Omega$.
    \begin{proposition}\label{prop:local vanishing}
        Let~$v_n$ be a sequence of solutions of~\eqref{eq:blow up eqn} satisfying~\eqref{Hypo on p-1}.
        Then~$[v_n-1]_+ \to 0$ locally uniformly in~$\Omega$.
    \end{proposition}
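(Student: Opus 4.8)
The plan is to argue by contradiction, combining the splitting/bootstrap technique of Lemma \ref{lemma:regularity of entire solution} with a blow-up at points where $[v_n-1]_+$ fails to be small, and then exploiting the non-existence (in the plane) of entire solutions with positive superlevel set under the hypothesis \eqref{Hypo on p-1}. Suppose the conclusion fails: then there is a compact $K\Subset\Omega$, a constant $\delta_0>0$, a subsequence, and points $x_n\in K$ with $v_n(x_n)-1\geq\delta_0$. Pass to a sequence of maximizers on a slightly larger compact set, so that $m_n:=\max_{K'} v_n \to m_\infty \geq 1+\delta_0$ (possibly $+\infty$), attained at some $x_n\to x^*\in\mathrm{int}\,K'$. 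First I would record the uniform local bound on $\|[v_n-1]_+\|_{L^\infty}$: by the mean-value/splitting argument exactly as in Lemma \ref{lemma:regularity of entire solution}, using only $\eps_n^{-2}\|[v_n-1]_+^p\|_{L^1(\Omega)}\leq H_{p-1}\cdot(\text{finite})$ (note $p>p-1$ so \eqref{Hypo on p-1} together with the a priori sup bound controls the $p$-integral as well — or one simply observes that the local $L^\infty$ bound is self-improving once $[v_n-1]_+$ is shown to be bounded), one gets $\|v_n\|_{L^\infty(K'')}\leq C$ for $K''\Subset\Omega$. Hence $m_\infty<+\infty$ after all, and $0<\delta_0\leq m_\infty-1\leq C$.

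Next I would rescale. Set $\mu_n:=\big(\eps_n^2/(m_n-1)^{p-1}\big)^{1/2}$, which tends to $0$ since $\eps_n\to0$ and $m_n-1$ stays bounded away from $0$ and $\infty$, and define
\begin{align*}
 w_n(y):=\frac{v_n(x_n+\mu_n y)-1}{m_n-1}, \qquad |y|<\tfrac{\mathrm{dist}(x^*,\partial\Omega)}{2\mu_n}.
\end{align*}
Then $-\Delta w_n=[w_n]_+^p$ on expanding balls, $w_n\leq 1=w_n(0)$, and $w_n\geq (0-1)/(m_n-1)\geq -C'$ is locally uniformly bounded. By elliptic estimates (the nonlinearity $t\mapsto[t]_+^p$ is $C^1$) $w_n\to w$ in $C^2_{\mathrm{loc}}(\R^2)$, where $w$ solves $-\Delta w=[w]_+^p$ in $\R^2$, $w\leq 1$, $w(0)=1$. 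Crucially I must show the mass stays finite: by the change of variables,
\begin{align*}
 \int_{B_R(0)}[w_n]_+^p\,\dd y=\frac{\mu_n^{-2}}{(m_n-1)^{p}}\cdot(m_n-1)^{p}\!\!\int_{B_{\mu_n R}(x_n)}\!\!\Big(\tfrac{[v_n-1]_+}{\mu_n^{0}}\Big)^p\frac{\dd x}{(m_n-1)^{p}}\cdot(m_n-1)^{p}
\end{align*}
— more cleanly, $\int_{B_R}[w_n]_+^p=\eps_n^{-2}(m_n-1)^{-1}\int_{B_{\mu_n R}(x_n)}[v_n-1]_+^p\,\dd x$, and since $\eps_n^{-2}\int_\Omega[v_n-1]_+^p\leq \|[v_n-1]_+\|_\infty \cdot H_{p-1}\leq C H_{p-1}$ is bounded, while $m_n-1\geq\delta_0$, the right side is $\leq C$ uniformly in $R$. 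Thus $\int_{\R^2}[w]_+^p\leq C_0<+\infty$, so Proposition \ref{prop:entire solution} applies to $w$. But $w(0)=1>0$ forces the alternative $\beta_p(w)>0$, hence $w$ is the explicit radial profile \eqref{eq:entire solution}, with $w>0$ precisely on a bounded ball and $w\to-\infty$ at infinity; in particular $\beta_{p-1}(w)=\frac1{2\pi}I_{p-1}>0$ by \eqref{bquant}.

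Finally I would derive the contradiction from the $(p-1)$-mass bound \eqref{Hypo on p-1}, which is the step I expect to be the real crux since it is exactly the dimension-$2$ phenomenon distinguishing this from $d\geq3$. The invariance \eqref{rinv3} shows $\beta_{p-1}$ is scale-invariant, so the limit profile carries a definite, nonvanishing $(p-1)$-mass concentrated at $x^*$. By Fatou / $C^2_{\mathrm{loc}}$ convergence, for every fixed $R$,
\begin{align*}
 \liminf_{n\to\infty}\frac{1}{\eps_n^2}\int_{B_{\mu_n R}(x_n)}[v_n-1]_+^{p-1}\dd x
 =\liminf_{n\to\infty}(m_n-1)^{0}\!\!\int_{B_R(0)}[w_n]_+^{p-1}\dd y
 \geq \int_{B_R(0)}[w]_+^{p-1}\dd y,
\end{align*}
using that the Jacobian factor $\mu_n^2$ cancels the $(m_n-1)^{-(p-1)}\eps_n^{-2}=\mu_n^{-2}$ prefactor exactly (this is the point of the $(p-1)$-power). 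Letting $R\to\infty$ gives $\liminf_n \eps_n^{-2}\int_\Omega[v_n-1]_+^{p-1}\geq \frac1{2\pi}\cdot 2\pi\,\beta_{p-1}(w)=I_{p-1}>0$ — which by itself is consistent with \eqref{Hypo on p-1}. To actually contradict it I would instead iterate: if $[v_n-1]_+$ does not vanish locally uniformly, one can extract \emph{disjoint} such concentration balls, or more simply observe that the limit $w$ has $w>0$ on a ball of fixed size in the rescaled picture, i.e. $v_n>1$ on $B_{c\mu_n}(x_n)$; but then \eqref{Hypo on p-1} together with the lower bound on $[v_n-1]_+$ there (namely $v_n-1\geq\frac12\delta_0 R_p^{-2/(p-1)}\phi(0)\cdot$const on a definite fraction of $B_{c\mu_n}(x_n)$) forces $\mu_n\to0$ slower than it does, or forces $\eps_n^{-2}\mu_n^2\cdot(\text{const})\leq H_{p-1}$, giving $\eps_n^{-2}\mu_n^2$ bounded; combined with $\mu_n^{p-1}=\eps_n^2/(m_n-1)$ and $m_n-1\asymp1$ this yields $\eps_n^{-2}\eps_n^{4/(p-1)}$ bounded, i.e. a constraint that is violated precisely because in dimension $2$ the exponents do not match (this is where $d=2$ enters: in $d\geq3$ the analogous limit profile exists and no contradiction arises). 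The honest route, which I would take in the write-up, is: the limiting profile $w$ of \eqref{eq:entire solution} has $[w]_+^{p-1}\notin L^1$-decay fast enough is false — actually $[w]_+$ has \emph{compact} support, so $\beta_{p-1}(w)$ is finite, and the contradiction with \eqref{Hypo on p-1} must come not from a single bubble but from the incompatibility of $w(0)=1$, $w\leq1$ with the local structure near a \emph{non-maximal} vanishing point; I would therefore phrase the final step as: since every blow-up limit at a point of non-vanishing is the profile \eqref{eq:entire solution}, the superlevel set $\{v_n>1\}$ is asymptotically a union of shrinking balls $B_{c\mu_n}(x_{n,i})$, but on each such ball $\eps_n^{-2}\int[v_n-1]_+^{p-1}\geq c'>0$ by scale-invariance of the $(p-1)$-mass, and a Vitali covering argument bounds the number of such balls by $H_{p-1}/c'$; pushing $n\to\infty$ and covering $K$ forces all these balls to leave any fixed compact set, contradicting $x_n\in K$. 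Hence $[v_n-1]_+\to0$ locally uniformly. $\square$
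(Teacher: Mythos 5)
Your blow-up set-up is essentially sound but the final contradiction is never actually achieved, and you say as much yourself (``which by itself is consistent with \eqref{Hypo on p-1}''). The scale-invariance of the $(p-1)$-mass under $\mathcal R_t$ is precisely why that quantity cannot detect anything: a single bubble carries mass $I_{p-1}$, which is perfectly compatible with $H_{p-1}\geq I_{p-1}$, and the subsequent Vitali-covering paragraph does not help — the finitely many concentration balls shrink \emph{toward} $x^*\in K$, so there is no mechanism forcing them to leave a fixed compact set. That portion of the argument cannot be repaired as written.

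The observation you are missing, and which would rescue your own route, is the \emph{lower} bound inherited from $v_n\geq 0$: with $w_n=(v_n(x_n+\mu_n y)-1)/(m_n-1)$ one has $w_n\geq -1/(m_n-1)\geq -1/\delta_0$, hence the $C^2_{\rm loc}$-limit $w$ is bounded below on all of $\R^2$. But the nontrivial entire profile \eqref{eq:entire solution} produced by Proposition \ref{prop:entire solution} behaves like $R_p^{-2/(p-1)}\phi'(1)\log(|x|/R_p)\to-\infty$ as $|x|\to\infty$, so $w$ cannot be of that form; therefore $\beta_p(w)=0$ and $w$ is a non-positive constant, contradicting $w(0)=1$. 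With that one line added, your argument closes. (Side remark: you do not actually need a separate $L^\infty$ bound on $[v_n-1]_+$ before rescaling — that step, as you present it, is circular since the $p$-mass bound you invoke already requires the sup bound; instead $[w_n]_+\leq w_n(0)=1$ together with $w_n\geq -1/\delta_0$ gives local compactness directly, and the $p$-mass of the limit is then finite because $[w]_+\leq 1$ and the $(p-1)$-mass of $w_n$ equals $\eps_n^{-2}\int[v_n-1]_+^{p-1}\leq H_{p-1}$ by the exact scaling you point out.)

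The paper's proof is shorter and bypasses Proposition \ref{prop:entire solution} entirely. It blows up $v_n$ itself, not its shifted version: $\hat v_n(y)=v_n(x_n+\eps_n y)$ is nonnegative and solves $-\Delta\hat v_n=[\hat v_n-1]_+^p\geq 0$, and the $(p-1)$-mass bound \eqref{Hypo on p-1} transfers to $\int[\hat v_n-1]_+^{p-1}\leq H_{p-1}$. After passing to a $C^2_{\rm loc}$ limit (quoting \cite[Theorem 6.1]{BJW}), one obtains a \emph{nonnegative superharmonic} function $\hat v$ on $\R^2$; by the classical two-dimensional Liouville theorem (\cite{MitP}) this forces $\hat v$ to be constant, hence $\Delta\hat v\equiv 0$, hence $[\hat v-1]_+\equiv 0$ — contradicting $\hat v(0)\geq 1+\sigma>1$. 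The difference between the two routes is exactly the use of the constraint $v_n\geq 0$: the paper encodes it as nonnegativity of the unrescaled limit and applies a soft Liouville theorem, whereas your route must encode it as a lower bound on $w$ and play it against the logarithmic decay of the classified profile.
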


    \begin{proof}
        Argue by contradiction and assume that there exists a sequence~$x_n\in \Omega_0\Subset \Omega$ such that~$v_n(x_n)\geq 1+\sigma$ for some~$\sigma >0$.
        Along a subsequence we may assume that $x_n\to x_0 \in \overline{\Omega_0}\Subset \Omega$.
        For each~$n\geq 1$, set
        \begin{align*}
            \hat{v}_n(y)\coloneqq v_n(x_n + \eps_n y), \qquad
            \forall y\in \widehat{\Omega}_n\coloneqq \frac{\Omega-x_0}{\eps_n},
        \end{align*}
        which satisfies
        \begin{align*}
            \begin{cases}
                -\Delta \hat{v}_n=[\hat{v}_n-1]_+^p,  & \mbox{ in } \widehat{\Omega}_n, \vspace{2mm} \\
                \hat{v}_n\geq 0, & \mbox{ in } \widehat{\Omega}_n.
            \end{cases}
        \end{align*}
        According to \eqref{Hypo on p-1}, we have that,
        \begin{align}\label{Lp-1}
            \int\limits_{\widehat{\Omega}_n} [\hat{v}_n-1]_+^{p-1}\dd{y}
            =\frac{1}{\eps_n^2}\int\limits_{\Omega} [v_n-1]_+^{p-1}\dd{x} \leq H_{p-1}.
        \end{align}
        By \cite[Theorem 6.1]{BJW} we have that, possibly along a subsequence, $\hat{v}_n\to \hat{v}$ in~$C^2_{\rm loc}(\R^2)$, for some~$\hat{v}\in C^2(\R^2)$ which solves,
        \begin{align*}
            \begin{cases}
                -\Delta\hat{v}=[\hat{v}-1]_+^p, & \mbox{ in } \R^2, \vspace{2mm} \\
                \hat{v}\geq 0, & \mbox{ in } \R^2.
            \end{cases}
        \end{align*}
        By a well known argument (see \cite{MitP}) we see that $\hat{v}$ is constant whence, in view of \eqref{Lp-1}, we have that $\Delta\hat{v}\equiv 0$.
        On the other side by assumption we have $\hat{v}_n(0)=v_n(x_n)\geq 1+\sigma>1$, so that
        $\hat{v}(0)\geq 1+\sigma$ and $-\Delta\hat{v}(0)=[\hat{v}(0)-1]_+^p>0$, which is contradiction.
    \end{proof}

\bigskip

    As a consequence, for any~$\Omega_0\Subset \Omega$, we have
    \begin{align*}
        \limsup_{n\to+\infty}\parenthesis{\sup_{\Omega_0} v_n } \leq 1,
    \end{align*}
    and we assume w.l.o.g that,
    \begin{align*}
        \sup_{\Omega_0} v_n \leq 2, \qquad \forall n\geq 1.
    \end{align*}
    Thus \eqref{Hypo on p-1} implies that,
    \begin{align*}
        \int\limits_{\Omega_0} \frac{1}{\eps_n^2} [v_n-1]_+^p \dd{x}
        \leq \int\limits_{\Omega_0}\frac{1}{\eps_n^2}[v_n-1]_+^{p-1}\dd{x} \leq H_{p-1}.
    \end{align*}

    \begin{definition}[Regular points]\label{def:reg} A point $x\in \om$ is said to be \underline{regular} w.r.t. a sequence of solutions $v_n$ of \eqref{eq:blow up eqn}
    if there exists $r>0$ such that
    \begin{align*}
        v_n|_{B_r(y)}\le 1, \qquad \forall y\in B_r(x), \quad \forall n\geq 1.
    \end{align*}
    \end{definition}

 \begin{remark}\label{rem:reg} If $x$ is regular w.r.t. to $v_n$ then the sequence~$\{v_n|_{B_r(x)}\}_{n\geq 1}$ is a sequence of bounded
 (from below by~$0$ and from above by~$1$) harmonic functions. Thus it sub-converges in~$C^k(B_{r/2}(x))$
 to a harmonic function which takes values in~$[0,1]$ as well.
 \end{remark}

\bigskip

As mentioned in the introduction, the subtle point is to deal with those interior points around which $[v_n-1]_+$ is positive along a subsequence.
Therefore we assume w.l.o.g. that there exist~$x_n\in\Omega$, $x_n\to x^*\in \Omega$ such that
    \begin{align*}
        v_n(x_n)=\max_{\Omega} v_n>1.
    \end{align*}
    From Proposition \ref{prop:local vanishing} we know that~$v_n(x_n)\to 1$ and we study these vanishing spikes by
    using a refined rescaling. Motivated by the Dancer-Yan solutions (see subsection \ref{subs:3.1}),
    we introduce a new parameter~$s_n>0$ defined as in \eqref{thetan1},
    \begin{align*}
        \parenthesis{\frac{\eps_n}{s_n}}^{\frac{2}{p-1}} \phi'(1)\ln(\sqrt{\pi} s_n)=1.
    \end{align*}
    Remark that for~$\eps_n$ sufficiently small, $s_n$ is uniquely defined and~$s_n\to 0^+$, as $\eps_n\to 0^+$.
    We assume w.l.o.g. that $s_n\leq \frac{1}{\sqrt{\pi}}$. For later convenience, we also denote
    \begin{align*}
        \theta_n\coloneqq \phi'(1)\ln(\sqrt{\pi}s_n),
    \end{align*}
    which, since~$\phi'(1)<0$, satisfies $\lim\limits_{n\to+\infty}\theta_n=+\infty$.
    Note that
    \begin{align}\label{2ways}
        \eps_n^2 = \frac{s_n^2}{\theta_n^{p-1}} \quad \mbox{ or either }\quad \frac{\eps_n^2}{\theta_n} = \frac{s_n^2}{\theta_n^{p}}.
    \end{align}

    We will analyse the rescaled functions,
    \begin{align}\label{eq:refined rescaling2}
        \tilde{v}_n(y)\coloneqq \phi'(1)\ln(\sqrt{\pi}s_n) \parenthesis{ v_n(x_n+ s_n y)-1}
        =\theta_n\parenthesis{ v_n(x_n+ s_n y)-1}
    \end{align}
    which satisfy
    \begin{align*}
        -\Delta \tilde{v}_n=[\tilde{v}_n]_+^p, \quad \mbox{ in }\quad \widetilde{\Omega}_n\coloneqq\frac{\Omega-x_n}{s_n}.
    \end{align*}
    Note that the~$\tilde{v}_n$'s are sign changing function in general. However, we have the following integral bounds:
    \begin{align*}
        \int\limits_{\widetilde{\Omega}_n}[\tilde{v}_n(y)]_+^{p-1}\dd{y}
        =\theta_n^{p-1}s_n^{-2}\int\limits_{\Omega}[v_n(x)-1]_+^{p-1}\dd{x}
        =\frac{1}{\eps_n^2}\int\limits_{\Omega}[v_n(x)-1]_+^{p-1}\dd{x}
        \leq H_{p-1}
    \end{align*}
    due to the Hypothesis~\eqref{Hypo on p-1}.
    Moreover, for any~$\Omega_0\Subset \Omega$, we have that~$\sup\limits_{\Omega_0}v_n \in [0,2]$ for any $n$ large, and then, by setting,
    \begin{align*}
        \widetilde{\Omega}_0\coloneqq\frac{\Omega_0 -x_n}{s_n},
    \end{align*}
    we see that (recall \eqref{2ways}),
    \begin{align*}
        \int\limits_{\widetilde{\Omega}_0} [\tilde{v}_n(y)]_+^p\dd{y}
        = \theta_n^p s_n^{-2} \int\limits_{\Omega_0}[v_n(x)-1]_+^p\dd{x}
        =\frac{\theta_n}{\eps_n^2} \int\limits_{\Omega_0} [v_n(x)-1]_+^p\dd{x}\leq H_p
    \end{align*}
    where we used \eqref{Hypo on p}.

      \begin{remark}\label{rem:spikeresc}
It is evident from \eqref{eq:refined rescaling2} that we are scaling the domain variable while blowing up the function with asymptotically
different rates. This is a characteristic feature of the $d=2$ refined rescaling (\cite{DY}) which allows one to catch these ``microscopic'' spikes, in striking contrast with the case $d\geq 3$ (see \cite{FW,W} and \cite{BJW}).
    \end{remark}

We are ready to prove Theorem \ref{thm:ALT near maximum}.

    \begin{proof}[Proof of Theorem \ref{thm:ALT near maximum}]
        If all interior points are regular according to Definition \ref{def:reg}, then in view of Remark \ref{rem:reg} we are in case (A).
        The estimate \eqref{eq:Vanishing-interior est} is then just a consequence of the mean value property of harmonic functions.\\
Therefore we assume without loss of generality that there exists $x_n\in \Omega_0\Subset \Omega$ such that
$$
x_n\to x^*\in \Omega_0
$$
and
        \begin{align}\label{4.10}
            v(x_n)=\sup_{\Omega_0} v_n >1,
        \end{align}
        implying that we are in case (B).\\
        Fix~$\delta <\frac{1}{4}\dist(x^*,\partial \Omega)$, then~$\overline{B_{4\delta}(x^*)}\subset \Omega$
        as well as~$B_\delta(x^*)\subset \overline{B_{2\delta}(x_n)}\subset\Omega$ for~$n$ large which we assume w.l.o.g.  for every~$n\geq 1$.

     At this point we consider the sequences~$t_n\equiv \left(\phi(0)/\theta_n(v_n(x_n)-1)\right)^{\frac{p-1}{2}}$ and~$u_n$ defined in~\eqref{eq:un def}.
        The sequence $u_n$ satisfies \eqref{eq:un eqn} together with the integral bounds,
        \begin{align*}
            \int\limits_{B_{\frac{2\delta}{s_n t_n}} (0)} [u_n]_+^{p-1}\dd{z}
            =\int\limits_{B_{2\delta}(x_n)} \frac{1}{\eps_n^p}[v_n-1]_+^{p-1}\dd{x}  \leq H_{p-1},
        \end{align*}
        and
        \begin{align}\nonumber
            &\int\limits_{B_{\frac{2\delta}{s_n t_n}}(0)} [u_n]_+^{p} \dd{z}
            = t_n^{\frac{2}{p-1}}\int\limits_{B_{2\delta}(x_n)} \frac{\theta_n}{\eps_n^2} [v_n -1 ]_+^{p}\dd{x} \leq\\
      \label{intboundp}      &t_n^{\frac{2}{p-1}}\theta_n[v_n(x_n)-1]_+ \cdot \int\limits_{B_{2\delta}(x_n)} \frac{1}{\eps_n^2}[v_n-1]_+^{p-1}\dd{x}
            \leq \phi(0)H_{p-1}.
        \end{align}

        If~$s_n t_n \to 0^+$, then~$\frac{2\delta}{s_n t_n}\nearrow +\infty$ and the disks~$B_{\frac{2\delta}{s_n t_n}}(0)$ exhaust the plane in the limit.
        In this case, the functions~$u_n$ subconverge locally in $\R^2$, due to the following Lemma.

        \begin{lemma}\label{lemma:ALT by Harnack}
            Let~$f_n$ be a sequence of solutions of
            \begin{align*}
                \begin{cases}
                     &-\Delta f_n = [f_n]_+^p, \mbox{ in } \omega, \vspace{2mm} \\
                    &\int\limits_{\omega} [f_n]_+^p \dd{x}\leq C_1 <+\infty,
                \end{cases}
            \end{align*}
            where~$\omega\subset \R^2$ is open and bounded with smooth boundary.
            Then
            \begin{itemize}
                \item[either] {\rm (i)} there exists a subsequence~$f_{n_k}$ which converges in~$C^2_{loc}(\omega)$,

                \item[or $\quad$] {\rm (ii)} $f_n\to -\infty$ locally uniformly in~$\omega$.
            \end{itemize}
        \end{lemma}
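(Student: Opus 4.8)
The plan is to prove Lemma~\ref{lemma:ALT by Harnack} by the classical Brezis--Merle style dichotomy, combined with the regularity/classification already obtained for entire solutions of the same equation. The key observation is that although the $f_n$ are sign changing, the positive part $[f_n]_+^p$ is nonnegative, has uniformly bounded $L^1$ norm, and drives the equation, so one can split off its Newtonian potential and control the remaining harmonic part by a Harnack-type argument.

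First I would fix an arbitrary $\omega'\Subset\omega$ and, for each $n$, write $f_n = g_n + h_n$ on a slightly larger domain $\omega''$ with $\omega'\Subset\omega''\Subset\omega$, where $-\Delta g_n=[f_n]_+^p$ in $\omega''$, $g_n=0$ on $\partial\omega''$, and $h_n$ is harmonic in $\omega''$ with $h_n=f_n$ on $\partial\omega''$. Exactly as in the proof of Lemma~\ref{lemma:regularity of entire solution}, since $[f_n]_+^p$ is bounded in $L^1$, Stampacchia's theory gives $g_n$ bounded in $W^{1,t}_0(\omega'')$ for every $t\in[1,2)$, hence by the two-dimensional Sobolev embedding $g_n$ bounded in $L^q(\omega'')$ for every $q<\infty$; moreover $g_n\ge 0$ by the maximum principle. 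Thus the only possible loss of compactness comes from $h_n$. Since $h_n\le f_n\le \cdots$ is \emph{not} available directly, instead I use that $h_n = f_n - g_n \le f_n$ on $\partial\omega''$, but more usefully: $h_n$ is harmonic, so by the Harnack inequality for the nonnegative harmonic functions $\sup_{\omega''} h_n - h_n$ (after subtracting the supremum over a fixed compact set) one gets the standard alternative — either $h_n$ is locally uniformly bounded above \emph{and} below along a subsequence, or $h_n \to -\infty$ locally uniformly. In the first case, $g_n$ bounded in every $L^q$ forces $[f_n]_+^p$ bounded in every $L^q$ (because $[f_n]_+\le [g_n]_+ + [h_n]_+$ and $[h_n]_+$ is locally bounded), so by elliptic regularity $g_n$ is bounded in $W^{2,q}_{loc}$, hence in $C^{1,\gamma}_{loc}$, and bootstrapping through the $C^1$ nonlinearity $t\mapsto[t]_+^p$ gives $f_n$ bounded in $C^{2,\gamma}_{loc}(\omega)$; Arzel\`a--Ascoli then yields a subsequence converging in $C^2_{loc}(\omega)$, which is case (i). In the second case $h_n\to-\infty$ locally uniformly while $g_n\ge 0$ is controlled, so $f_n = g_n + h_n \to -\infty$ locally uniformly, which is case (ii).

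The one point requiring care — and the main obstacle — is ruling out the intermediate scenario where $h_n$ neither stays bounded nor goes uniformly to $-\infty$, i.e. justifying that the Harnack dichotomy is genuinely an either/or. This is handled by the standard trick: apply the Harnack inequality to $m_n - h_n \ge 0$ on a chain of overlapping balls covering $\omega'$, where $m_n := \sup_{\overline{\omega'}} h_n$; either $m_n$ stays bounded along a subsequence, in which case $h_n$ is bounded above and, by Harnack applied to $m_n-h_n$, also bounded below on $\omega'$; or $m_n\to -\infty$, in which case Harnack forces $h_n\to-\infty$ uniformly on $\omega'$. A covering argument upgrades ``on $\omega'$'' to ``locally uniformly on $\omega$'' since $\omega'\Subset\omega$ was arbitrary (using a diagonal subsequence over an exhaustion of $\omega$). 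I would also note that in case (i) the $C^2_{loc}$ limit $f$ solves $-\Delta f=[f]_+^p$ in $\omega$ with $\int_\omega[f]_+^p\le C_1$, which is consistent with (and in the application feeds into) Proposition~\ref{prop:entire solution}. No new ingredient beyond the $W^{1,t}$ estimate of Stampacchia, the mean value/Harnack inequality for harmonic functions, and elliptic bootstrap is needed, all of which have already appeared in the excerpt.
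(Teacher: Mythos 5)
Your decomposition $f_n = g_n + h_n$ and the Stampacchia/Sobolev $L^q$ bounds on $g_n$ match the paper's proof, but there is a genuine gap in your treatment of the harmonic part $h_n$. You assert that ``$h_n \le f_n \le \cdots$ is not available directly'' and substitute a Harnack dichotomy applied to $m_n - h_n$ with $m_n := \sup_{\overline{\omega'}} h_n$. That $m_n$ is a priori unbounded in $n$, so your dichotomy silently drops the third branch $m_n\to +\infty$; moreover $m_n - h_n$ need not be nonnegative outside $\overline{\omega'}$, which makes the Harnack chain of overlapping balls degenerate near $\partial\omega'$. The estimate you think is unavailable is in fact the crux of the paper's argument, and it already appears (for $w_2$) in the proof of Lemma~\ref{lemma:regularity of entire solution}: since $g_n\geq 0$, one has $h_n = f_n - g_n \le f_n \le [f_n]_+$ \emph{everywhere}, and the mean value property of the harmonic $h_n$ together with Jensen's inequality and the $L^1$ bound on $[f_n]_+^p$ gives, for $x\in\omega'$ and $R:=\dist(\omega',\partial\omega'')$,
\begin{align*}
 h_n(x) = \fint_{B_R(x)} h_n \;\le\; \fint_{B_R(x)} [f_n]_+ \;\le\; \Bigl(\fint_{B_R(x)} [f_n]_+^p\Bigr)^{1/p} \;\le\; \Bigl(\tfrac{C_1}{\pi R^2}\Bigr)^{1/p} =: C(\omega',\omega'').
\end{align*}
With this uniform upper bound in hand, $C(\omega',\omega'')+1 - h_n$ is a strictly positive harmonic sequence (bounded below by $1$) on $\omega'$, and the Harnack dichotomy applies cleanly with no third branch.

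The same missing bound undermines your case (ii). You write ``$g_n\ge 0$ is controlled, so $f_n=g_n+h_n\to -\infty$'', but the Stampacchia $L^q$ bounds do not give $g_n$ bounded in $L^\infty_{\rm loc}$, and nothing you have established forbids $g_n$ from blowing up locally and offsetting the decay of $h_n$. The paper closes this \emph{before} splitting into cases: from the uniform bound on $[h_n]_+$ one gets $[f_n]_+ \le [g_n]_+ + [h_n]_+$ bounded in $L^q_{\rm loc}$ for every $q<\infty$, hence $[f_n]_+^p\in L^q_{\rm loc}$, hence by interior elliptic estimates $g_n\in W^{2,q}_{\rm loc}\hookrightarrow L^\infty_{\rm loc}$, and only then does the Harnack dichotomy for $h_n$ transfer to the stated dichotomy for $f_n$. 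Your overall structure is right, but you inverted the order and dropped exactly the estimate (the mean value upper bound on $h_n$) that makes the dichotomy an honest either/or.
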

         \begin{proof}[Proof of Lemma~\ref{lemma:ALT by Harnack}]
        Consider the decomposition~$f_n= f_{n,1}+ f_{n,2}$, where
        \begin{align*}
            \begin{cases}
                -\Delta f_{n,1}= [f_n]_+^p & \mbox{ in  } \omega, \vspace{2mm} \\
                f_{n,1}=0, & \mbox{ on } \partial \omega,
            \end{cases}
            & &
            \begin{cases}
                -\Delta f_{n,2}=0 ,& \mbox{ in  } \omega, \vspace{2mm} \\
                f_{n,2}=f_n, & \mbox{ on } \partial \omega.
            \end{cases}
        \end{align*}
      By the maximum principle $f_{n,1}$ is nonnegative and, due to the classical estimates in \cite{stam}, since
      $[f_n]_+^p$ is uniformly bounded in~$L^1$, we have that,
        \begin{align*}
            \|f_{n,1}\|_{W^{1,t}(\omega)} \leq C(t), \qquad \forall t\in(1,2).
        \end{align*}
         Since we are in dimension two, by Sobolev embedding we see that for any~$q\geq 1$,
        \begin{align*}
            \|f_{n,1}\|_{L^q(\omega)} \leq C(q).
        \end{align*}

        Observe that $f_{n,2}$ is harmonic and bounded from above, $f_{n,2}=f_n- f_{n,1}\leq f_n \leq [f_n]_+$.
        Thus for any~$x\in \omega'\Subset \omega$, the mean value property implies that,
        \begin{align*}
            f_{n,2}(x)=\fint\limits_{B_R(x)} f_{n,2}\dd{y}
            \leq\fint\limits_{B_R(x)} [f_n]_+\dd{y}
            \leq \parenthesis{\fint\limits [f_n]_+^p\dd{y}}^{\frac{1}{p}}
            \leq \parenthesis{\frac{C_1}{\pi R^2}}^{\frac{1}{p}},
            \qquad \forall B_R(x)\subset \omega.
        \end{align*}
        We deduce that~$[f_{n,2}]_+$ is bounded in~$L^\infty_{\rm loc}(\omega)$.
        Moreover, for~$\omega'\Subset \omega$ as above, let~$C(\omega,\omega')-1$ be a uniform upper bound for~$[f_{n,2}]_+$ in~$\omega'$, which is also the uniform upper bound for~$f_{n,2}$ in~$\omega'$.
        Thus~$(C(\omega,\omega')-f_{n,2})$ is a sequence a strictly positive  (bounded from below by~$1$) harmonic functions in~$\omega'$.
        The Harnack inequality implies that either there is a uniformly bounded subsequence,
        or they are uniformly divergent~\footnote{This can be seen in the following way. Consider
        the sequence~$a_n\coloneqq \inf\limits_{k\geq n} \parenthesis{\inf\limits_{x\in \omega'}
        (C(\omega,\omega')-f_k(x))}$. If~$(a_n)$ is bounded from above, then we get a convergent subsequence~$(a_{n_k})$,
        which corresponds to a bounded subsequence of harmonic functions;
        then we can get a convergent sub-subsequence. If not, then~$(a_n)$ diverges to~$+\infty$,
        so is the sequence~$C(\omega,\omega')-f_n$, hence~$f_{n}$ diverges to~$-\infty$ uniformly on~$\omega'$.} on~$\overline{\omega'}$.

        Note that~$[f_n]_+ \leq [f_{n,1}]_+ + [f_{n,2}]_+$ where the right hand side is now in~$L^q_{\rm loc}(\omega)$ for any~$q>1$. Another bootstrap argument then implies~$f_{n,1}$ is also bounded in~$L^\infty_{\rm loc}(\omega)$.

        \

        Therefore we conclude that
        \begin{itemize}
            \item[either] {\rm (i)} there exists a subsequence~$f_{n_k}$ which is bounded in~$L^\infty_{\rm loc}(\omega)$: in this case we can use bootstrap argument to conclude that a subsequence~$f_{n_{k_j}}$ actually converges in~$C^2_{\rm loc}(\omega)$;
            \item[or $\quad$] {\rm (ii)} $f_{n}\to-\infty$ locally uniformly in~$\omega$,
        \end{itemize}
as claimed.
    \end{proof}

       \medskip

        We can apply Lemma~\ref{lemma:ALT by Harnack} to $u_n$ in any $B_R(0)$ for any $R>0$,
        and noting that~$u_n$ cannot diverge to~$-\infty$ locally uniformly since~$u_n(0)=\phi(0)>0$, we conclude that
        there is a subsequence which converges in~$C^2_{\rm loc} (\R^2)$ to a limit function~$u_\infty\in C^2_{\rm loc}(\R^2)$. In view of \eqref{intboundp},
        we have that $u_\infty$ is an entire solution of~\eqref{eq:entire eqn} with~$u_\infty(0)=\phi(0)$, hence, in view of Proposition
        \ref{prop:entire solution}, we deduce that $u_\infty= w^*$ in~$\R^2$.
        Furthermore, by using a diagonal argument, we obtain a sequence~$R_n\nearrow +\infty$ such that~$ R_n s_n t_n < 2\delta$ and~$\|u_n - w^*\|_{C^2(B_{2R_n})}\to 0$.
        Thus we have
        \begin{align*}
            \int\limits_{B_{2R_n s_n t_n}(x_n)} \frac{1}{\eps_n^p}[v_n-1]_+^{p-1}\dd{x}
            =\int\limits_{B_{2R_n} (0)} [u_n]_+^{p-1}\dd{z} = I_{p-1} + o(1),
        \end{align*}
        and
        \begin{align*}
            \int\limits_{B_{2R_n s_n t_n}(x_n)} \frac{\theta_n}{\eps_n^2}[v_n-1]_+^{p}\dd{x}
            =\frac{1}{t_n^{\frac{2}{p-1}}}\int\limits_{B_{2R_n} (0)} [u_n]_+^{p-1}\dd{z}
            =\frac{I_p+ o(1)}{t_n^{\frac{2}{p-1}}},
        \end{align*}
        as~$n\to+\infty$.
        This corresponds to the Type I spike if $t_n\to t_\infty<+\infty$, to the Type II spike if $t_n\to+\infty$ but~$s_n t_n \to 0^+$, and
        we see that a Type II spike is characterized by the fact that the~$(p)$-mass of the sequence~$\tilde{v}_n$ vanishes.
        Note that, again in view of \eqref{intboundp}, as a byproduct of this argument we see that~$t_n$ has a positive lower bound,
        say $t_n\geq \parenthesis{\frac{I_p}{2H_p}}^{\frac{p-1}{2}}>0$.

       \bigskip

        Next we consider the case where $s_n t_n$ has a positive lower bound, which means that,
        up to subsequence, either~$s_n t_n \to \frac{1}{r_0}>0$ or~$s_n t_n \to +\infty$.
        In this case, the rescaled disks~$B_{2\delta/s_n t_n}(0)$ are uniformly bounded sets and we don't have a planar problem to attach to the limiting
        function. Instead, we can consider the functions,
        \begin{align*}
                \frac{v_n(x)-1}{\eps_n^{\frac{2}{p-1}}}
            \end{align*}
            which satisfy
            \begin{align*}
                -\Delta \parenthesis{\frac{v_n(x)-1}{\eps_n^{\frac{2}{p-1}}}}
                =\left[\frac{v_n(x)-1}{\eps_n^{\frac{2}{p-1}}}\right]_+^p \quad \mbox{ in }\quad B_{2\delta}(x_n),
            \end{align*}
            whose maximum values are,
            \begin{align*}
                \frac{v_n(x_n)-1}{\eps_n^{\frac{2}{p-1}}}
                =\frac{\theta_n}{s_n^{\frac{2}{p-1}}} \frac{\tilde{v}_n(0)}{\theta_n}
                =\frac{\phi(0)}{(s_n t_n)^{\frac{2}{p-1}} }.
            \end{align*}

            Therefore by Lemma \ref{lemma:ALT by Harnack} we have that,
            \begin{align*}
             v_n(x)-1= \frac{\eps_n^{\frac{2}{p-1}} }{(s_n t_n)^{\frac{2}{p-1}}}(f(x)+o(1)),\quad \forall\, x \in B_{\delta}(x^*),
            \end{align*}
            for some bounded function $f$ satisfying $f(x)\leq \phi(0)$.
        This corresponds to the Fading spike in (B-iii). Note that in this case the~$(p-1)$-mass satisfies,
        \begin{align*}
            \int\limits_{B_{\delta}(x^*)} \frac{1}{\eps_n^2} [v_n-1]_+^{p-1} \dd{x}
            \leq \pi \dt^2\frac{\phi^{p-1}(0) }{(s_n t_n)^2}(1+o(1)),
        \end{align*}
        which of course need not converge to some integer multiple of~$I_{p-1}$. Actually it would converge to zero whenever $s_n t_n\to+\infty$.
        However, the~$(p)$-mass will vanish in the limit as from \eqref{2ways} we have that,
          \begin{align*}
            &\int\limits_{B_{\delta}(x^*)} \frac{\theta_n}{\eps_n^2} [v_n-1]_+^{p} \dd{x}
            \leq \pi \dt^2\frac{\theta_n}{\eps_n^2}\frac{\phi^{p}(0)}{(s_n t_n)^{\frac{2p}{p-1}}}{\eps_n^{\frac{2p}{p-1}}}\leq\\
            &\pi \dt^2\frac{\phi^{p}(0)}{(s_n t_n)^{\frac{2p}{p-1}}}\theta_n\eps_n^{\frac{2}{p-1}}=
            \pi \dt^2 \frac{\phi^{p}(0)}{(s_n t_n)^{\frac{2p}{p-1}}} s_n^{\frac{2}{p-1}}(1+o(1)).
        \end{align*}
    \end{proof}

    The local spikes arising from Theorem \ref{thm:ALT near maximum} are shown in Figuer~\ref{fig:1}.

    \begin{figure}
  \centering
  \includegraphics[width=0.8\textwidth]{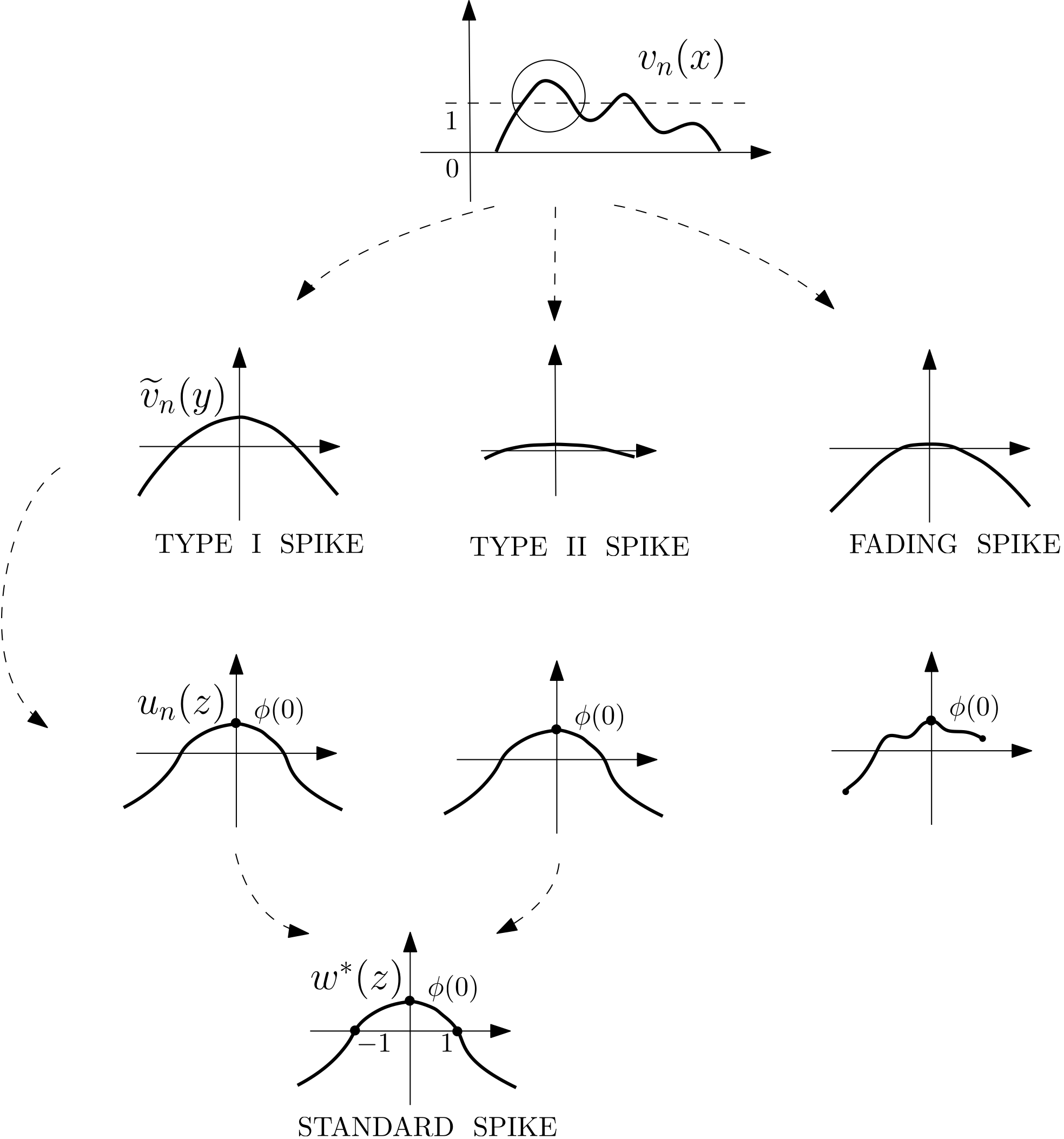}
  \caption{Spikes arising from Theorem \ref{thm:ALT near maximum}} \label{fig:1}
\end{figure}

% \bigskip
% \begin{center}
%  \includegraphics[width=0.6\textwidth]{fig1.png}
%
% \smallskip
%
% \captionof{figure}{Spikes arising from Theorem \ref{thm:ALT near maximum}}
% \end{center}

    \begin{remark}\label{rem:4.6}
        In terms of the sequence $v_n$ satisfying \eqref{eq:blow up eqn} and the interior maximum points $x_n$ (which converge to
        $x^*\in \Omega_0\Subset \Omega$) we have the following local picture corresponding to the above alternatives: for all~$x\in B_\delta(z)$ and up to a subsequence:
        \begin{itemize}
            \item[either] {\rm (A) [Vanishing] } $v_n(x)\leq 1$, $\forall n\geq 1$,

            \item[or $\quad$] {\rm (B-i or ii) [Type I or II spikes]}
            $v_n(x)= 1+\frac{1}{t_n^{\frac{2}{p-1}}\theta_n} w^*(\frac{x-x_n}{s_n t_n})+ o(\frac{1}{\theta_n t_{n}^{\frac{2}{p-1}}})$,
            with $$|x-x_n|\leq R_ns_n t_n,\quad  t_n\to t_\infty\leq +\infty \mbox{ as }n\to+\infty.$$

            \hspace{-4mm} Moreover, for any~$R>1$,
            \begin{align*}
                \frac{1}{\eps_n^2}\int\limits_{B_{R s_n t_n}(x_n)} [v_n(x)-1]_+^{p-1}\dd{x}=I_{p-1} + o(1), & &
                \frac{\theta_n}{\eps_n^2}\int\limits_{B_{R s_n t_n}(x_n)} [v_n(x)-1]_+^{p}\dd{x}= \frac{I_p}{t_n^{\frac{2}{p-1}}} + o(1),
            \end{align*}
            as~$n\to+\infty$,
            \item[or $\quad$] {\rm (B-iii) [Fading spikes]} $v_n(x)= 1+ \phi(0)\eps_n^{\frac{2}{p-1}} (s_n t_n)^{-\frac{2}{p-1}} + o\parenthesis{\eps_n^{\frac{2}{p-1}}}$ with~$ (s_n t_n)^{-1}\to \rho\geq 0$.
            In particular, for any~$r<\delta$,
            \begin{align*}
                \frac{1}{\eps_n^2}\int_{B_r(x_n)} [v_n(x)-1]_+^{p-1} \dd{x}= \phi(0)^{p-1}\pi r^2 ((s_n t_n)^{-2} +o(1)),
            \end{align*}
            \begin{align*}
                \frac{\theta_n}{\eps_n^2}\int_{B_r(x_n)} [v_n(x)-1]_+^p\dd{x}=\phi(0)^{p}\pi r^2 s_n^{\frac{2}{p-1}}((s_n t_n)^{-\frac{2p}{p-1}}+o(1)),
            \end{align*}
            as~$n\to+\infty$.
        \end{itemize}
    \end{remark}

   \bigskip

\section{Extraction of a second spike sequence}\label{sec5}
We prove various partial results which at last will be used to prove Theorem \ref{thm:ALT many maximum}. We split the discussion into three
subsections, whose titles are meant to clarify which is the aim therein.\\

    We keep the notations in \eqref{4.10} and let~$x_{n,1}\equiv x_n\to x^*\in \Omega$ be the interior maxima of~$v_n$ such that
    $$\max\limits_{\Omega_0} v_n = v_n(x_{n,1}).$$

    If $x^*$ is regular, whence $v_n(x_n)-1\leq 0$, or either if this is a Fading sequence,
    i.e.~$v_n(x_n)-1 \leq \phi(0)\eps_n^{\frac{2}{p-1}}(A_n+o(1))$,
    then we see that there are no interior spikes at all in any~$\Omega_0\Subset\Omega$.\\
    Therefore, in this section we consider the case where
    \begin{equation}\label{Htype}
    v_n(x_{n,1}) \mbox{ yields a spike at } x^*\in \Omega_0 \mbox{ either of Type I or of Type II}.
    \end{equation}
    We wonder whether or not there is another sequence of points yielding another spike at the same point~$x^*$,
    which makes $x^*$ a multiple-spike point and, whenever this were the case, how to describe the second spike.

    \

    Let $R_{n,1}=R_n$ and $t_{n,1}=t_n$ be the quantities defined in Theorem~\ref{thm:ALT near maximum} and
    let $$x_{n,2}\in \Omega\setminus B_{2R_{n,1} s_n t_{n,1}}(x_{n,1})$$ be defined as follows,
    \begin{align*}
     v_n(x_{n,2})=\sup_{\Omega\setminus B_{2R_{n,1} s_n t_{n,1}}(x_{n,1})} v_n,
    \end{align*}
    where we assume w.l.o.g. that, possibly along a subsequence,
    \begin{align}\label{Hypxn2}
       x_{n,2}\to x^* \quad \mbox{ and }\quad   v_n(x_{n,2})-1>0.
    \end{align}
    Note that~$|x_{n,2}-x_{n,1}|\geq 2 R_{n,1}s_n t_{n,1}$ by definition while $0<v_n(x_{n,2})-1\to 0^+$ by Proposition~\ref{prop:local vanishing}.

    Let us define $t_{n,2}\coloneqq \parenthesis{\phi(0)/ \theta_n (v_n(x_{n,2}-1))}^{\frac{p-1}{2}}(\geq t_{n,1}\geq T_0)$ and
    rescale~$v_n$ as follows,
    \begin{align}\label{eq:un2 def}
        u_{n,2}(z)\coloneqq t_{n,2}^{\frac{2}{p-1}} \theta_n \parenthesis{ v_n(x_{n,2}+ s_{n} t_{n,2} z)-1 }, \qquad z\in B_{{2\delta} / {s_n t_{n,2}}}(0).
    \end{align}
    Then~$u_{n,2}(0)=\phi(0)$, and~$u_{n,2}$ satisfies,
    \begin{align*}
        -\Delta u_{n,2} = [u_{n,2}]_+^p, \qquad \mbox{ in }  B_{{2\delta} / {s_n t_{n,2}}}(0),
    \end{align*}
    as well as the integral bounds,
    \begin{align*}
        \int\limits_{B_{2\delta/ s_n t_{n,2}}(0)} [u_{n,2}(z)]_+^{p-1}\dd{z}
        = \int\limits_{B_{2\delta}(x_{n,2})}  \frac{1}{\eps_n^2}[v_{n}-1]_+^{p-1} \dd{x}\leq H_{p-1},
    \end{align*}
    \begin{align*}
        \int\limits_{B_{2\delta/s_n t_{n,2}} (0)} [u_{n,2}(z)]_+^p \dd{z}
        =t_{n,2}^{\frac{2}{p-1}}\int\limits_{B_{2\delta}(x_{n,2})} \frac{\theta_n}{\eps_n^2}[v_n(x)-1]_+^p\dd{x}
        \leq \phi(0)H_{p-1}.
    \end{align*}

    \begin{remark}\label{rem:typeIvsII}
    The fact that~$s_n t_{n,2}\geq s_n t_{n,1}$ implies that, if~$v_n(x_{n,1})$ yields a spike of Type I, then~$v_n(x_{n,2})$ could be either a
    of Type I or of Type II or Fading. On the other side, if~$v_n(x_{n,1})$ is of Type II,
    then~$v_n(x_{n,2})$ cannot be of Type I.
\end{remark}

\subsection{Two local maximizers cannot be too close each other.}\label{subs:5.1}

    We are forced to compare the relative distance between~$x_{n,2}$ and~$x_{n,1}$ with the rescaling rate $s_n t_{n,2}$.
    The following Lemma says that $|x_{n,1}-x_{n,2}|$ is much larger than $s_n t_{n,2}$.
    \begin{lemma}\label{lemma:rel dist}
       Let $x_{n,2}$, $x_{n,1}$ and $s_n t_{n,2}$ be defined as above, then we have,
        \begin{align}\label{eq:far I}
            \frac{|x_{n,1}-x_{n,2}|}{s_n t_{n,2}}\to+\infty.
        \end{align}
    \end{lemma}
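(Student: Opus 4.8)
The plan is to argue by contradiction: suppose along a subsequence that $|x_{n,1}-x_{n,2}|/(s_nt_{n,2}) \to L \in [0,+\infty)$. I want to show this forces two separate bumps of the rescaled profile $u_{n,2}$ centered at $z=0$, so that $u_{n,2}$ cannot converge to the single radial model $w^*$, contradicting the convergence extracted via Lemma~\ref{lemma:ALT by Harnack} and Proposition~\ref{prop:entire solution}. Concretely, first I would recall that because $x_{n,2}$ maximizes $v_n$ outside $B_{2R_{n,1}s_nt_{n,1}}(x_{n,1})$ we have $v_n(x_{n,2})\le v_n(x_{n,1})$, hence $t_{n,2}\ge t_{n,1}$. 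Then the key observation is that $s_nt_{n,2} \ge s_nt_{n,1}$, so $2R_{n,1}s_nt_{n,1} \le 2R_{n,1}s_nt_{n,2}$; since $|x_{n,1}-x_{n,2}|\ge 2R_{n,1}s_nt_{n,1}$ with $R_{n,1}\to+\infty$, the ratio $|x_{n,1}-x_{n,2}|/(s_nt_{n,1})\to+\infty$ automatically — the subtlety is that $t_{n,2}$ can be much larger than $t_{n,1}$, so this does not by itself control the ratio with denominator $s_nt_{n,2}$. So the contradiction hypothesis $L<\infty$ must really be confronted.

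Assuming $L<\infty$, I would rescale around $x_{n,2}$ as in~\eqref{eq:un2 def} and apply Lemma~\ref{lemma:ALT by Harnack} (using $u_{n,2}(0)=\phi(0)>0$ to rule out divergence to $-\infty$) to get, along a subsequence, $u_{n,2}\to w^*$ in $C^2_{\rm loc}(\R^2)$. Now translate the \emph{other} spike into these coordinates: the point $y_n := (x_{n,1}-x_{n,2})/(s_nt_{n,2})$ satisfies $y_n\to y_\infty$ with $|y_\infty|=L$. The value of $u_{n,2}$ at $y_n$ is $t_{n,2}^{2/(p-1)}\theta_n(v_n(x_{n,1})-1) = t_{n,2}^{2/(p-1)}\theta_n(v_n(x_{n,1})-1)$, which by the definition of $t_{n,1}$ equals $\phi(0)\,(t_{n,2}/t_{n,1})^{2/(p-1)} \ge \phi(0)$. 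If $t_{n,2}/t_{n,1}$ stays bounded, then $u_{n,2}(y_n)$ stays $\ge\phi(0)$, so in the limit $w^*(y_\infty)\ge\phi(0)=w^*(0)=\max w^*$; since $w^*$ attains its max only at the origin this forces $y_\infty=0$, i.e. $L=0$. But $L=0$ means $x_{n,1}$ is, in the $u_{n,2}$-scale, merging with $z=0$ while still being, in the original picture, the \emph{global} maximizer and $x_{n,2}$ the maximizer \emph{outside} a ball of radius $2R_{n,1}s_nt_{n,1}$ around $x_{n,1}$; this is incompatible with $|x_{n,1}-x_{n,2}|/(s_nt_{n,2})\to 0$ combined with $s_nt_{n,2}\ge s_nt_{n,1}$ and $R_{n,1}\to+\infty$, since it would give $2R_{n,1}s_nt_{n,1}\le|x_{n,1}-x_{n,2}| = o(s_nt_{n,2})$, hence $R_{n,1}= o(t_{n,2}/t_{n,1})$ — so the only escape is $t_{n,2}/t_{n,1}\to+\infty$ fast.

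That remaining case, $t_{n,2}/t_{n,1}\to+\infty$, is where I expect the real work. Here $u_{n,2}(y_n)=\phi(0)(t_{n,2}/t_{n,1})^{2/(p-1)}\to+\infty$, so if $y_n$ stayed in a bounded region we would already contradict the local $C^2$-boundedness of $u_{n,2}$ near its $C^2_{\rm loc}$ limit $w^*$; hence necessarily $|y_n|\to+\infty$, contradicting $|y_\infty|=L<\infty$. So in fact \emph{every} case with $L<\infty$ is excluded, which is exactly~\eqref{eq:far I}. The main obstacle is making the dichotomy on $t_{n,2}/t_{n,1}$ airtight: one must be careful that the convergence $u_{n,2}\to w^*$ is only known on the exhausting disks $B_{2\delta/s_nt_{n,2}}(0)$ and one must check $y_n$ lies inside them (it does, since $|x_{n,1}-x_{n,2}|\le \operatorname{diam}\Omega$ so $|y_n|\le C/(s_nt_{n,2})\ll 2\delta/(s_nt_{n,2})$ eventually is false — rather $|y_n|$ could be comparable; one uses instead that $x_{n,1}\in\Omega_0\Subset\Omega$ so $x_{n,1}+s_nt_{n,2}y_n=x_{n,1}$ trivially lies in the domain of $u_{n,2}$). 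Packaging these estimates, together with the uniqueness of the maximum point of $w^*$ from Proposition~\ref{prop:entire solution}, yields the claim.
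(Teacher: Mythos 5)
Your proof is correct and uses essentially the same mechanism as the paper's: rescale at $x_{n,2}$, observe that the image $z_{n,1,2}=(x_{n,1}-x_{n,2})/(s_nt_{n,2})$ of the first maximizer stays bounded under the contradiction hypothesis, and show that $u_{n,2}(z_{n,1,2})=\phi(0)(t_{n,2}/t_{n,1})^{2/(p-1)}\to+\infty$, contradicting local boundedness of the blow-up limit. The only difference is that your case split on $t_{n,2}/t_{n,1}$ (invoking the rigidity of the maximum point of $w^*$ to handle the bounded case) is superfluous: the paper obtains $t_{n,2}/t_{n,1}\to+\infty$ immediately, by combining the contradiction hypothesis $|x_{n,1}-x_{n,2}|/(s_nt_{n,2})\le C$ with the defining inequality $|x_{n,1}-x_{n,2}|\ge 2R_{n,1}s_nt_{n,1}$ to get $R_{n,1}t_{n,1}/t_{n,2}\le C$, which with $R_{n,1}\to+\infty$ forces $t_{n,2}/t_{n,1}\to+\infty$ without any appeal to the shape of $w^*$. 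You in fact derive this same inequality inside your first case and use it to contradict that case's hypothesis, so you could have short-circuited directly to the divergence argument.
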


    \begin{proof}
        By contradiction assume that there exists~$C>0$ such that,
        \begin{align*}
            \frac{R_{n,1} t_{n,1}}{t_{n,2}}\leq \frac{|x_{n,1}-x_{n,2}|}{s_n t_{n,2}}\leq C,
        \end{align*}
        implying in particular that,
        \begin{align*}
          2R_{n,1}\leq \frac{|x_{n,1}-x_{n,2}|}{s_n} \leq C t_{n,2},
        \end{align*}
        and $\frac{|x_{n,1}-x_{n,2}|}{s_n}\to +\infty$ and~$t_{n,2}\to +\infty$ as~$n\to +\infty$.

        Recall the functions~$u_{n,1}$ and~$u_{n,2}$ defined by~\eqref{eq:un def} and~\eqref{eq:un2 def}.
        Putting
        \begin{align*}
            z_{n,1,2}\coloneqq \frac{x_{n,1} -x_{n,2}}{s_n t_{n,2}},
        \end{align*}
        which is uniformly bounded, possibly along a subsequence we have $z_{n,1, 2}\to z_{\infty,1,2}$, and then
        \begin{align*}
            u_{n,2}(z_{n,1,2})= t_{n,2}^{\frac{2}{p-1}}\theta_n\parenthesis{ v_n(x_{n,1} )-1}= \parenthesis{\frac{t_{n,2}}{t_{n,1}}}^{\frac{2}{p-1}} u_{n,1}(0)\to +\infty
        \end{align*}
        since~$u_{n,1}(0)=\phi(0)>0$ and~$\frac{t_{n,2}}{t_{n,1}}\to +\infty$.
        This is a contradiction to the fact that~$u_{n,2}$ is bounded from above by~$\phi(0)$.
    \end{proof}

\medskip

\subsection{No Fading spike if there is already a spike of Type I/II}\label{subs:5.2}
Next we wish to rule out the Fading alternative in presence of a spike of Type I or II. Therefore we consider the case where $s_n t_{n,2}$ has a positive lower bound,
which means~$t_{n,2}$ diverges to~$+\infty$ very fast, so that $v_n(x_{n,2})$ is a Fading sequence with $x_{n,2}$ converging at $x^*$.
We will see that this is impossible
as far as \eqref{Htype} is satisfied.

\medskip

    Without loss of generality we may assume that
    \begin{align*}
        \frac{1}{s_n t_{n,2}}\to \rho\geq 0.
    \end{align*}
    Recall that~$v_n(x_{n,2})=\sup\braces{v_n(x)\mid x\in \Omega\setminus B_{R_{n,1}s_n t_{n,1}}(x_{n,1})}$ and
    \begin{align*}
        \theta_n (v_n(x_{n,2})-1)=\phi(0)t_{n,2}^{-\frac{2}{p-1}}=\phi(0)\frac{s_n^{\frac{2}{p-1}}}{(s_n t_{n,2})^{\frac{2}{p-1}}}= \phi(0)(\rho+ o(1)) s_n^{\frac{2}{p-1}},
    \end{align*}
    thus
    \begin{align}\label{eq:decay away from x1}
        \sup_{\Omega\setminus B_{R_{n,1}s_n t_{n,1}}(x_{n,1})} v_n
        \leq v_n(x_{n,2}) = 1 + \phi(0)(\rho+o(1))\frac{s_n^{\frac{2}{p-1}}}{\theta_n}= 1 + \phi(0)(\rho+o(1))\eps_n^{\frac{2}{p-1}}.
    \end{align}
    Let~$G(x,x')$ be the Green function for the domain~$\Omega$ with Dirichlet boundary condition, and assume~$v_n=0$ on~$\p\Omega$.
    Then for any~$x\in \Omega$ we have
    \begin{align*}
        v_n(x)=\frac{1}{\eps_n^2}\int\limits_{\Omega} G(x,x') [v_n(x')-1]_+^p\dd{x'}.
    \end{align*}
    To estimate the value at a point~$x\in \Omega\setminus B_{r}(x_{1})$ where~$r>0$ is small,
    we split the domain~$\Omega$ into two parts:~$\Omega= B_{R_n s_n t_{n,1}}(x_{n,1})\cup (\Omega\setminus B_{R_n s_n t_{n,1}}(x_{n,1}))$.
    Therefore, near~$x_1$ we have:
    \begin{align*}
        \frac{1}{\eps_n^2} \int\limits_{B_{R_{n,1}s_n t_{n,1}}(x_{n,1})} G(x,x')[v_n(x')-1]_+^p\dd{x'}
        =&\frac{1}{\eps_n^2} \frac{(s_n t_{n,1})^2}{\theta_n^p t_{n,1}^{\frac{2p}{p-1}}}
        \int\limits_{B_{R_{n,1}}(0)} G(x, x_{n,1}+ s_n t_{n,1} z) [u_{n,1}(z)]_+^p \dd{z}\\
        =&\frac{1}{\theta_n t_{n,1}^{\frac{2}{p-1}}}\int\limits_{B_{R_{n,1} } (0)} G(x,x_{n,1}+s_n t_{n,1}z) [u_{n,1}(z)]_+^p\dd{z}.
    \end{align*}
    Since~$R_{n,1}\to+\infty$,  the balls $B_{R_n s_n t_{n,1}}(x_{n,1})$ exhaust the plane and~$u_{n,1}\to w^*$
    in~$C^2_{\rm loc}(\R^2)$, thus~$[u_{n,1}]_+^p\to [w^*]_+^p$ in~$C^0_{\rm loc}(\R^2)$ and they have uniformly bounded support.
    Observing that $s_n t_{n,1}\to 0^+$, we deduce that,
    \begin{align*}
        \frac{\theta_n}{\eps_n^2}\int\limits_{B_{R_{n,1}s_n t_{n,1}}(x_{n,1})} G(x,x')[v_n(x')-1]_+^p\dd{x'}
        =&\frac{1}{t_{n,1}^{\frac{2}{p-1}}} \int\limits_{B_1(0)} G(x,x_{1}) [w^*(z)]_+^p\dd{z} + o(1) \\
        =&\frac{I_p}{t_{n,1}^{\frac{2}{p-1}}}G(x,x_1)+ o(1).
    \end{align*}
    The integral on $\Omega\setminus B_{R_n s_n t_{n,1}}(x_{n,1})$ takes the form,
    \begin{align*}
        0\leq &\frac{\theta_n}{\eps_n^2}\int\limits_{\Omega\setminus B_{R_{n,1} s_n t_{n,1}}(x_{n,1})} G(x,x')[v_n(x')-1]_+^p\dd{x'} \\
        \leq&\frac{\theta_n}{\eps_n^2}\int\limits_{\Omega\setminus B_{R_{n,1} s_n t_{n,1}}(x_{n,1})} G(x,x')(\phi(0)(\rho+o(1)) \eps_n^{\frac{2}{p-1}})^p \dd{x'} \\
        \leq & s_n^{\frac{2}{p-1}}\phi(0)^p (\rho+o(1))^p \parenthesis{\int\limits_{\Omega}G(x,x')\dd{x'}}.
    \end{align*}
    Thus for~$x\in \Omega\setminus B_r(x^*)$,
    \begin{align*}
        \theta_n v_n(x)= \frac{I_p}{t_{n,1}^{\frac{2}{p-1}}}G(x,x^*)+ o(1), \qquad \mbox{as} \quad n\to+\infty.
    \end{align*}
    Since the right hand side is bounded while~$\theta_n\to+\infty$ and~$t_{n,1}$ are bounded from below, we conclude that for any~$r>0$ and any~
    $x\in \Omega\setminus B_r(x^*)$, ~$v_n(x)\to 0$ locally uniformly.
    In particular, fix~$r\in (0, \frac{1}{2}\dist(x^*,\p\Omega))$, there exists~$n_r\in\mathbb{N}$ such that
    \begin{align}\label{eq:vanish away from x1}
        [v_n(x)-1]_+ =0,   \qquad  \forall x \in \Omega\setminus B_r(x^*), \quad \forall n>n_r.
    \end{align}
    Remark that this estimates holds true under the assumption that~$x_{n,2}$ is a Fading sequence. By no means we can assume it to hold in general.
     In particular we cannot claim that there is only one spike in~$\Omega$.

    \

    Next we apply again the Green representation argument to see that, in view of \eqref{eq:decay away from x1} and \eqref{eq:vanish away from x1},
    then~$v_n(x_{n,2})\leq 1$ which contradicts the assumption \eqref{Hypxn2}.

    \

    Indeed, because of~\eqref{eq:vanish away from x1}, the Green representation formula above reduces to an
    integration over~$B_r(x^*)$ for~$n> n_r$: for any~$x\in \Omega$,
    \begin{align*}
        v_n(x)=\frac{1}{\eps_n^2}\int\limits_{\Omega} G(x,x')[v_n(x')-1]_+^p\dd{x'}
        =\frac{1}{\eps_n^2}\int\limits_{B_r(x^*)} G(x,x')[v_n(x')-1]_+^p\dd{x'}.
    \end{align*}
    Combining this identity with~$t_{n,1}^{\frac{2}{p-1}}\theta_n (v_n(x_{n,1})-1)=u_{n,1}(0)=\phi(0)$, we have
    \begin{align*}
        \theta_n v_n(x_{n,1})
        = \frac{\theta_n}{\eps_n^2}\int\limits_{B_r(x^*)} G(x_{n,1},x')[v_n(x')-1]_+^p\dd{x'}
        =u_{n,1}(0)t_{n,1}^{-\frac{2}{p-1}}+\theta_n.
    \end{align*}
    Therefore, by using the classical decomposition $G(x,x')=-\frac{1}{2\pi}\ln|x-x'| + H(x,x')$ where~$H(x,x')$ is the regular part (hence locally bounded),
    we have
    \begin{align*}
        \theta_n (v_n(x)- 1)
        =&\frac{\theta_n}{\eps_n^2} \int\limits_{B_r(x^*)} \parenthesis{G(x,x')-G(x_{n,1},x')}[v_n(x')-1]_+^p\dd{x'} + u_{n,1}(0)t_{n,1}^{-\frac{2}{p-1}} \\
        =&\frac{\theta_n}{\eps_n^2} \int\limits_{B_r(x^*)} \parenthesis{H(x,x')-H(x_{n,1},x')}[v_n(x')-1]_+^p\dd{x'} + u_{n,1}(0)t_{n,1}^{-\frac{2}{p-1}} \\
        & \qquad+ \frac{\theta_n}{\eps_n^2} \int\limits_{B_r(x^*)} \frac{1}{2\pi}\ln\frac{|x_{n,1}-x'|}{|x-x'|} [v_n(x')-1]_+^p\dd{x'} .
    \end{align*}
    Let~$t_{n,1}\to t_{\infty,1} \in [T_0, +\infty]$, then the term~$u_{n,1}(0)t_{n,1}^{-\frac{2}{p-1}}\to \phi(0)t_{\infty,1}^{-\frac{2}{p-1}}\geq 0$, which is in particular bounded.
    The integration involving~$H(x,x')-H(x_{n,1},x')$ is also bounded because of~\eqref{Hypo on p}.
    For the remaining part involving the fundamental solutions, we have that
    \begin{align*}
        &\frac{\theta_n}{\eps_n^2} \int\limits_{B_{R_{n,1}s_n t_{n,1}}(x_{n,1})} \frac{1}{2\pi}\ln\frac{|x_{n,1}-x'|}{|x-x'|} [v_n(x')-1]_+^p\dd{x'}\\
        =&\frac{1}{2\pi}\int\limits_{B_{R_{n,1} t_{n,1}} (0) } \ln\frac{|y|}{|\frac{x-x_{n,1}}{s_n} -y|} [\tilde{v}_n(y)]_+^p\dd{y}\\
        =&\frac{t_{n,1}^{-\frac{2}{p-1}} }{2\pi}\int\limits_{B_{R_{n,1} /t_{n,1} } (0)} \ln \frac{|z|}{|\frac{x-x_{n,1}}{s_n t_{n,1}} - z | }
        [u_{n,1}(z)]_+^p\dd z\\
        =&\frac{t_{n,1}^{-\frac{2}{p-1}} }{2\pi}\int\limits_{B_2 (0)} \ln \frac{|z|}{|\frac{x-x_{n,1}}{s_n t_{n,1}} - z | } [u_{n,1}(z)]_+^p\dd z
    \end{align*}
    meanwhile, in view of \eqref{eq:decay away from x1},
    \begin{align*}
        & \left| \frac{\theta_n}{\eps_n^2} \int\limits_{B_r(x_1)\setminus B_{R_{n,1} s_n t_{n,1}}(x_{n,1}) } \frac{1}{2\pi}\ln\frac{|x_{n,1}-x'|}{|x-x'|} [v_n(x')-1]_+^p\dd{x'}\right| \\
        \leq& \frac{\phi(0)^p(\rho+o(1))^p}{2\pi} s_n^{\frac{2}{p-1}} \int\limits_{\Omega\setminus B_{R_{n,1}s_n t_{n,1}}(x_{n,1})} \left|\ln\frac{|x_{n,1} -x'| }{|x-x'|} \right|\dd{x'}
        \leq C s_n^{\frac{2}{p-1}}.
    \end{align*}
    Thus, putting $x=x_{n,1}+s_n t_{n,1}z$, and~$ 2R_{n,1}  \leq |z| \leq \frac{\dist(x_1,\p\Omega)/2}{s_n t_{n,1}}$, we have that,
    \begin{align*}
        &u_{n,1}(z)=t_{n,1}^{\frac{2}{p-1}}\theta_n(v_n(x_{n,1}+ s_n t_{n,1} z)-1)
        = O(1)+\frac{1}{2\pi} \int\limits_{B_2 (0)} \ln \frac{|z'|}{|z - z' | } [u_{n,1}(z')]_+^p\dd z' =\\
        & O(1)+ \frac{1}{2\pi}\ln\frac{1}{|z|} \int\limits_{B_1(0)} \ln|z'| [w^*(z')]_+^p\dd{z'}.
    \end{align*}
    This fact immediately implies that,
    \begin{align*}
        [v_n(x)-1]_+^p \leq 0, \qquad \mbox{ in } B_r(x^*)\setminus B_{2R_n s_n t_{n,1}(x_{n,1})},
    \end{align*}
    which contradicts the assumption \eqref{Hypxn2}, that is, for some~$x_{n,2}\in B_r(x_1)\setminus B_{2R_n s_n(x_{n,1})}$,~$v_n(x_{n,2})-1>0$ for any $n$.
    Therefore, as far as \eqref{Htype} is satisfied, there cannot be a Fading spike at $x^*$, as claimed.

\subsection{Formation of another spike}\label{subs:5.3}
Because of \eqref{Htype} and since we have ruled out the Fading spike at $x^*$,
we necessarily have $s_n t_{n,2}\to 0^+$ together with \eqref{eq:far I}. Therefore, for any~$R\geq 1$ and for~$n$ large,
    \begin{align}\label{eq:5.3}
        B_{R s_n t_{n,2}} (x_{n,2}) \cap B_{R s_n t_{n,1}} (x_{n,1})=\emptyset.
    \end{align}
    In particular the rescaled functions~$u_{n,2}$ defined in~\eqref{eq:un def} assume their global maximum at the origin:~
    $\sup\limits_{B_R(0)} u_{n,2}=u_{n,2}(0)=\phi(0)$ for any~$R>0$ and for~$n$ large. Moreover we have the following
    uniform bound about the $(p)$-mass:
    \begin{align*}
        &\int\limits_{B_R(0)} [u_{n,2}]_+^p\dd{z}
        =\frac{t_{n,2}^{\frac{2p}{p-1}}\theta_n^p }{s_n^2 t_{n,2}^2} \int\limits_{B_{R s_n t_{n,2}}(x_{n,2})} [v_n(x)-1]_+^p\dd{x}=\\
        &\frac{t_{n,2}^{\frac{2}{p-1}}\theta_n^p }{s_n^2}\int\limits_{B_{R s_n t_{n,2}}(x_{n,2})} [v_n(x)-1]_+^p\dd{x}=
        \frac{t_{n,2}^{\frac{2}{p-1}}\theta_n }{\eps_n^2}\int\limits_{B_{R s_n t_{n,2}}(x_{n,2})} [v_n(x)-1]_+^p\dd{x}\leq \phi(0)H_{p-1},
    \end{align*}
    where we used \eqref{2ways} and the last inequality follows as in \eqref{intboundp}.\\
    At this point, according to Lemma~\ref{lemma:ALT by Harnack}, up to a subsequence $u_{n,2}$ converges to an entire
    solution~$u_{\infty,2}$ of the form~\eqref{eq:entire solution} with~$R_p=1$ (since~$u_{n,2}(0)=\phi(0)$), i.e.~$u_{\infty,2}=w^*$.
    Thus the sequence~$v_n(x_{n,2})$ yields a second spike either of Type I or of Type II.

    Remark that~$v_n(x_{n,2})-1\leq v_n(x_{n,1})-1$, whence we surely have that $t_{n,2}\geq t_{n,1}$,
    but this is not enough to determine the Type of the second spike arising from~$v_n(x_{n,2})$.
    Indeed, if~$v_n(x_{n,1})$ yields a spike of Type I, then in principle the second spike could be either of Type I or of Type II,
    while if $v_n(x_{n,1})$ is already of Type II, then the second spike must be of Type II as well.

\bigskip

\section{The proof of Theorem \ref{thm:ALT many maximum}}\label{sec6}
In this section we prove Theorem \ref{thm:ALT many maximum}.
\begin{proof}[The Proof of Theorem \ref{thm:ALT many maximum}]$\,$\\
First of all we have the following
\begin{lemma}\label{lem:bdy} Let~$v_n$ be a sequence of solutions of~\eqref{eq:blow up eqn}. There exists $d_*>0$ such that there are no critical points of $v_n$ in
$\om_{*}=\{x\in\om\,:\,\mbox{dist}(x,\pa \om)<d_*\}$.
\end{lemma}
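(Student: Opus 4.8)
The plan is to reduce the lemma to a uniform gradient lower bound for $v_n$ in a collar of $\pa\om$, and this in turn to the statement that the plasma region $\{v_n>1\}$ stays uniformly away from $\pa\om$. Since we are inside the proof of Theorem \ref{thm:ALT many maximum} we may use $v_n|_{\pa\om}=0$; as $-\eps_n^2\Delta v_n=[v_n-1]_+^p\ge0$ each $v_n$ is superharmonic, so by the strong maximum principle either $v_n\equiv0$ (trivial) or $v_n>0$ in $\om$. It thus suffices to produce $d_0>0$ and $n_0\in\N$ so that: \emph{(i)} $[v_n-1]_+\equiv0$ on $\om_{d_0}:=\{x\in\om:\dist(x,\pa\om)<d_0\}$ for $n\ge n_0$, hence $v_n$ is harmonic there; and \emph{(ii)} $|\nabla v_n|>0$ on a smaller collar $\om_{d_*}$ with $d_*$ independent of $n$.

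\emph{Step (i) (the crux).} Argue by contradiction: if it fails, after relabelling there are $y_n$ with $d_n:=\dist(y_n,\pa\om)\to0$ and $v_n(y_n)>1$, which we choose to maximize $v_n$ over a fixed collar $\{\dist(\cdot,\pa\om)\le\eta\}$, so that for $n$ large $y_n$ is a local maximizer of $v_n$. Rescale at scale $\eps_n$, $\hat v_n(z):=v_n(y_n+\eps_n z)$, which solves $-\Delta\hat v_n=[\hat v_n-1]_+^p$ with $\int[\hat v_n-1]_+^{p-1}\dd z=\eps_n^{-2}\int_\om[v_n-1]_+^{p-1}\dd x\le H_{p-1}$ by \eqref{Hypo on p-1}, and with $\hat v_n\le\hat v_n(0)=v_n(y_n)$ on the rescaled collar. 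Up to a subsequence $d_n/\eps_n\to L\in[0,+\infty]$; if $L=+\infty$ the domains exhaust $\R^2$ and $\hat v_n$ subconverges (as in Proposition \ref{prop:local vanishing}, via \cite{MitP} and the $(p-1)$-mass bound) to an entire solution, necessarily a constant $\le1$; if $L<+\infty$ the limit is a bounded nonnegative solution of $-\Delta w=[w-1]_+^p$ on a half-plane with zero Dirichlet datum and finite $(p-1)$-mass, and the odd reflection across the bounding line --- under which $[w-1]_+$, hence the right-hand side, is unchanged --- reduces this to the $2d$ rigidity of Proposition \ref{prop:entire solution}, \cite{cli2}, \cite{MitP}, forcing again a constant $\le1$ (the sub-case $L=0$ being immediate, since then $v_n(y_n)\to0$). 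In all cases this contradicts $v_n(y_n)>1$ \emph{unless} the limiting constant is exactly $1$; in that borderline situation one passes to the finer Dancer--Yan rescaling $u_n$ of Theorem \ref{thm:ALT near maximum}, which exhibits a genuine spike at $y_n$, and then the Dirichlet Green representation $v_n(x)=\eps_n^{-2}\int_\om G(x,x')[v_n(x')-1]_+^p\dd x'$, together with the blow-up $H(y_n,y_n)\to-\infty$ of the Robin function as $y_n\to\pa\om$, pins the decay rate and the location of this spike in a way incompatible with $v_n(y_n)\to1$, the lower bound $t_n\ge T_0$ (see \eqref{intboundp}) and the $p$-mass control \eqref{Hypo on p}. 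This establishes \emph{(i)}.

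\emph{Step (ii).} On $\om_{d_0}$ the $v_n$ are harmonic, positive, vanish on $\pa\om$ and are uniformly bounded ($\sup_{\om_{d_0}}v_n\le\sup_{\{\dist=d_0\}}v_n\le2$ for $n$ large). Fix a harmonic $h$ in $\om_{d_0/2}$ with $h=0$ on $\pa\om$ and $h=1$ on $\{\dist=d_0/2\}$; for $d_0$ small $\om_{d_0/2}$ is a smooth collar and $|\nabla h|>0$ near $\pa\om$. Set $\hat v_n:=v_n\,h(x_0)/v_n(x_0)$ for a fixed interior point $x_0$ (one per boundary component) at distance $d_0/2$; this normalization does not affect vanishing of $\nabla v_n$. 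By the boundary Harnack principle (valid since $\pa\om\in C^{2,\beta}$), $\hat v_n/h$ lies between two fixed positive constants and is uniformly H\"older up to $\pa\om$; in particular $\hat v_n\ge c_1 h$ near $\pa\om$, so by the Hopf lemma $|\pa_\nu\hat v_n|\ge c_1|\pa_\nu h|\ge c_2>0$ on $\pa\om$ uniformly in $n$. Since $\hat v_n$ is harmonic, uniformly bounded, with zero $C^{2,\beta}$ boundary datum, boundary Schauder estimates give a uniform $C^1(\overline{\om_{d_0/4}})$ bound, so $\nabla\hat v_n$ is equicontinuous up to $\pa\om$; combined with $|\nabla\hat v_n|\ge c_2$ on $\pa\om$ this yields $d_*\in(0,d_0/4)$, independent of $n$, with $|\nabla\hat v_n|\ge c_2/2$ on $\om_{d_*}$, whence $|\nabla v_n|>0$ on $\om_{d_*}=\om_*$. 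The finitely many $n<n_0$ are handled individually (each such $v_n$ is then a fixed positive harmonic function near $\pa\om$, hence $\nabla v_n\neq0$ on a collar by Hopf and continuity; shrink $d_*$ if needed).

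\emph{Main obstacle.} The delicate step is \emph{(i)}, i.e.\ ruling out a spike forming on $\pa\om$: this is precisely where the $2d$ non-existence of entire finite-mass solutions bounded away from the constant, and its half-plane Dirichlet counterpart via reflection, must be combined with the boundary structure. I expect the technical heart to be the bookkeeping of the logarithmic terms in the Green representation --- in particular the blow-up of the Robin function at $\pa\om$ --- weighed against the constraints $v_n(y_n)\to1$ and $t_n\ge T_0$, with \eqref{Hypo on p-1} playing the role of preventing concentration in the first place.
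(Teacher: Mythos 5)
Your proposal takes a genuinely different route from the paper. The paper disposes of the lemma in one line: it invokes the Gidas--Ni--Nirenberg moving plane method together with a Kelvin transform (exploiting that $\partial\Omega$ is $C^{2,\beta}$ and hence satisfies a uniform exterior ball condition), citing Proposition~4 of \cite{MaW} for the details. That argument is applied directly to the equation $-\eps_n^2\Delta v_n=[v_n-1]_+^p$ with $v_n|_{\partial\Omega}=0$, and it produces a collar of width $d_*$ depending only on the geometry of $\Omega$ (not on $n$) in which $v_n$ is strictly monotone along suitable inward directions; criticality is then impossible. No asymptotic analysis, no blow-up, no knowledge of where $[v_n-1]_+$ vanishes is needed.

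By contrast, your scheme ``prove $[v_n-1]_+\equiv0$ in a collar, then use harmonicity $+$ Hopf'' requires establishing a quantitative no-boundary-spike result before the lemma can even start, and there are two genuine problems in your Step~(i). First, the odd-reflection reduction in the case $d_n/\eps_n\to L<\infty$ is incorrect as stated: if $\tilde w(x,y)=-w(x,-y)$ for $y<0$, then for $y<0$ one has $[\tilde w-1]_+\equiv0$ (since $\tilde w\le0$ there) while $-\Delta\tilde w(x,y)=-[w(x,-y)-1]_+^p$, which is generically nonzero and of the wrong sign; so $\tilde w$ does \emph{not} solve $-\Delta\tilde w=[\tilde w-1]_+^p$ on $\R^2$, and Proposition~\ref{prop:entire solution} does not apply. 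One would instead need a dedicated half-plane rigidity statement, which you do not prove. Second, and more importantly, the ``borderline situation'' (a Dancer--Yan spike accumulating at $\partial\Omega$, to be killed via the Green representation and the blow-up of the Robin function) is only sketched; you explicitly say you ``expect'' the bookkeeping to work. This is precisely the technical heart of your Step~(i), and without it the step is not established; nor is it clear how the Green-representation argument would handle several simultaneous spikes, some interior and one accumulating at the boundary. Given that the moving plane argument settles the lemma directly, uniformly in $n$, and without any of this machinery, your route is both incomplete at its crux and substantially more involved than necessary.
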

\begin{proof}[The Proof of Lemma \ref{lem:bdy}]$\,$\\ Since we are in dimension $d=2$ and since by the regularity assumption about the
the domain we have that $\pa \Omega$ satisfies a uniform exterior ball condition, then the proof is a well known consequence of a
moving plane argument (\cite{gnn}) to be combined with a Kelvin transform. We refer the reader to Proposition 4 in \cite{MaW} for further details.
\end{proof}

\medskip
{\it Proof of} (a)-(b)-(c)-(d).\\
Let $v_n$ be a sequence of solutions of~\eqref{eq:blow up eqn} and let
$$
v_n(x_{n,1})=\sup\limits_{\Omega}{v_n},
$$
then, possibly along a subsequence, we have $x_{n,1}\to x_1\in\om$, where we used Lemma \ref{lem:bdy}. According to Theorem \ref{thm:ALT near maximum} in principle
we could have the Vanishing alternative, which is easily ruled out. Indeed, if this was the case, we could peak any open and relatively
compact set $\om_1\Subset \om$ such that $x_1\in \om_1$ to deduce that,
$$
\sup\limits_{\om}[v_n-1]_+=[v_n(x_{n,1})-1]_+=\sup\limits_{\om_1}[v_n-1]_+=0,
$$
for any $n$ large enough, which obviously contradicts \eqref{eq:NV p}. Thus the Vanishing alternative cannot happen and the next step will be to rule out the Fading spike alternative.\\
If by contradiction this was the case, by Theorem \ref{thm:ALT near maximum} we would have that,
$$
v_n(x)- 1\leq  \frac{\eps_n^{\frac{2}{p-1}}}{(s_n t_n)^{\frac{2}{p-1}}}(\phi(0)+o(1)), \quad \forall |x-x^*|\leq \dt,
$$
for some $\dt$ small, but since $v_n$ is the maximum in $\om$, then this local estimate would hold for any in $x\in \om$, and the estimate
\eqref{eq:Fading energies} would take the form,
$$
\frac{\theta_n}{\eps_n^2}\int\limits_{\om} [v_n-1]_+^{p}\dd{x}\leq C s_n^{\frac{2}{p-1}}\to 0,
$$
which contradicts \eqref{eq:NV p}. Therefore we infer again from Theorem \ref{thm:ALT near maximum} that we have either
a spike of Type I or of Type II.

\

    Inductively, let us assume that for some $m\geq 2$ we have already chosen $m-1$ sequences of local maximizers
    $x_{n,j}$ with $v_n(x_{n,j})>1$ and~$x_{n,j}\to x_{\infty,j}\in\om$ as~$n\to+\infty$ for~$j=1,2,\cdots,m-1$, which yield either
    a spike of Type I or of Type II each. Remark that, in view of Lemma \ref{lem:bdy}, for each $j$ we have $x_{\infty,j}\in\om$.
    Thus, according to Theorem \ref{thm:ALT near maximum}, we also have well defined
    sequences~$R_{n,j}$ and~$t_{n,j}$, for~$j=1,2,\cdots, m-1$ satisfying the properties listed either in (B-i) or in (B-ii).\\
    At this point, let us define,
    $$
    x_{n,m}\in\Omega\setminus \bigcup_{j=1}^{m-1} B_{R_{n,j}s_n t_{n,j}}(x_{n,j})
    $$
    such that,
    \begin{align*}
        v_n(x_{n,m})=\sup\braces{v_n(x)\mid x\in \Omega\setminus \bigcup_{j=1}^{m-1} B_{R_{n,j}s_n t_{n,j}}(x_{n,j})}
    \end{align*}
    If~$v_n(x_{n,m})\leq 1$ for infinitely many~$n$, then passing to a subsequence, we are done with the spike analysis for this subsequence.
    In fact we define,
    $$
    R_{n}:=\min\limits_{j\in \{1,\cdots,m-1\}}R_{n,j},
    $$
    which obviously satisfies $R_n s_nt_{n,j}\to +\ii$, for any $j$.
    Then, observing that, in view of \eqref{eq:NV p}, at least one sequence $x_{n,j}$ must yield a Spike of Type I, we can set $N_I+N_{II}=m-1$, with
    $N_{I}\geq 1$ and define, as in the statement of the Theorem, $X_{\rm I}=\{x^*_{n,i}\}_{i\in\{1,\cdots,N_I\}, n\in\N}$
    to be the sequences of local maximizers yielding Type I spikes and $X_{\rm II}=\{x^{**}_{n,i}\}_{i\in\{1,\cdots,N_{II}\},n\in\N}$
    to be the sequences of local maximizers yielding Type II spikes.
    As a consequence, according to Remark \ref{rem:4.6}, we have that,
     $$
     v_n(x)= 1+\frac{1}{t_{n,j}^{\frac{2}{p-1}}\theta_n} w^*(\frac{x-x_{n,j}}{s_n t_{n,j}})+ o(\frac{1}{t_{n,j}^{\frac{2}{p-1}}\theta_n }),
     \quad |x-x_{n,j}|\leq R_{n}s_nt_{n,j},
     $$
     where
     $$
     t_{n,j}\to t_{\ii,j}\in (T_0,+\ii] \mbox{ as }n\to+\infty \mbox{ and }
     \graf{t_{\ii,j}\in (T_0,+\ii), \;\mbox{if}\; x_{n,j}=x^*_{n,i},\,i\in \{1,\cdots,N_{I}\}\\\,\\
     t_{\ii,j}=+\ii, \;\mbox{if}\; x_{n,j}=x^{**}_{n,i},\,i\in \{1,\cdots,N_{II}\}}
     $$
     so that, putting,
     $$
     D^*_n:=\left\{\bigcup_{i=1}^{N_I} B_{R_{n}s_n t_{n,i}}(x^*_{n,i})\right\},\quad
     D^{**}_n:=\left\{\bigcup_{i=1}^{N_{II}} B_{R_{n}s_n t_{n,i}}(x^{**}_{n,i})\right\}
     $$

     $$
     D_n:=D^*_n\bigcup D^{**}_n
     $$
     we also have that,
           \begin{align*}
              \frac{1}{\eps_n^2}\int\limits_{\om} [v_n(x)-1]_+^{p-1}\dd{x}=\frac{1}{\eps_n^2}\int\limits_{D_n} [v_n(x)-1]_+^{p-1}\dd{x}=
              (N_I+N_{II})I_{p-1} + o(1)
            \end{align*}
            \begin{align*}
              \frac{\theta_n}{\eps_n^2}\int\limits_{\om} [v_n(x)-1]_+^{p}\dd{x}=
              \frac{\theta_n}{\eps_n^2}\int\limits_{D^*_n} [v_n(x)-1]_+^{p}\dd{x}=
              \sum\limits_{i\in \{1,\cdots,N_I\}}\frac{I_p}{t_{\ii,i}^{\frac{2}{p-1}}} + o(1),
            \end{align*}
            as~$n\to+\infty$. It is worth to remark that the singular set $\Sigma=\Sigma_I\cup \Sigma_{II}$ in the statement is just the set of cluster points
            of $X_I$ and $X_{II}$, implying in particular that
            $$
            D_n\Subset (\Sigma)_r\quad \mbox{for any $r$ small enough}.
            $$

    Moreover, since $v_n$ is harmonic in $\om\setminus D_n$, by the maximum principle we have that,
    $$
    t_{n,j}^{\frac{2}{p-1}}\theta_n (v_n(x)-1)\leq \phi^{'}(1)\log(R_n),\quad \forall\,x \in \om\setminus D_n, \forall \,j\in\{1,\cdots,N_I+N_{II}\},
    $$
    implying that $[v_n-1]_+=0$ in  $\om\setminus D_n$. Therefore, recalling (B-i) and (B-ii), by the Green representation formula we see that,
    \begin{align} \nonumber
  \theta_n  v_n(x)= &\frac{\theta_n}{\eps_n^2}\int\limits_{D_n}G(x,x')[v_n(x')-1]^p_+\dd{x'}\\ \nonumber
    =&\int\limits_{D^*_n}G(x,x')\frac{\theta_n}{\eps_n^2}[v_n(x')-1]^p_+\dd{x'}+
   \int\limits_{D^{**}_n}G(x,x')[v_n(x')-1]^p_+\dd{x'}\\ \nonumber
   =& \sum_{i=1}^{N_I} \frac{\theta_n}{\eps_n^2} \frac{(s_nt_{n,i})^2}{\theta_n^p t_{n,i}^{\frac{2p}{p-1}}} \int\limits_{B_{R_{n}}}
   G(x,x^*_{n,i}+ s_n t_{n,i}z)[u_n(z)]_+^p\dd{z}\\\nonumber
   & +\sum_{i=1}^{N_{II}} \frac{\theta_n}{\eps_n^2} \frac{(s_nt_{n,i})^2}{\theta_n^p t_{n,i}^{\frac{2p}{p-1}}} \int\limits_{B_{R_{n}}}
   G(x,x^{**}_{n,i}+ s_n t_{n,i}z)[u_n(z)]_+^p\dd{z}\\\nonumber
   =& \sum_{i=1}^{N_I}\frac{1}{t_{n,i}^{\frac{2}{p-1}}}\int\limits_{B_{2}}
   G(x,x^*_{n,i}+ s_n t_{n,i}z)[u_n(z)]_+^p\dd{z}+\sum_{i=1}^{N_{II}} \frac{1}{t_{n,i}^{\frac{2}{p-1}}}\int\limits_{B_{2}}
   G(x,x^{**}_{n,i}+ s_n t_{n,i}z)[u_n(z)]_+^p\dd{z}\\ \label{green:1}
   =&\sum_{i=1}^{N_I} \frac{I_p}{t_{n,i}^{\frac{2}{p-1}}}G(x,x^*_{n,i})+o_r(1)=\sum_{i=1}^{N_I} \frac{I_p}{t_{n,i}^{\frac{2}{p-1}}}G(x,x^*_{\ii,i})+o_r(1),\qquad \forall\, x\in \om\setminus (\Sigma)_r
    \end{align}
   where $o_r(1)$ is some quantity which uniformly converges to $0$ for any fixed $r$ small enough. Therefore the properties (a)-(b)-(c) in the claim
   would be proved as far as $v_n(x_{n,m})\leq 1$ along a subsequence, whence we assume w.l.o.g. that $v_n(x_{n,m})>1$ for all~$n\geq 1$ and
   let~
   $$
   t_{n,m}\coloneqq \parenthesis{\frac{\phi(0)}{\theta_n (v_n(x_{n,m})-1)}}^{\frac{p-1}{2}}.
   $$

   We can assume that $x_{n,m}\to x_{\ii,j}$ for some $j\in \{1,\cdots,m-1\}$ otherwise the proof is easier.
   By the result in subsection \ref{subs:5.2}, $x_{n,m}$ cannot be a Fading spike while by the results in subsections \ref{subs:5.1} and \ref{subs:5.3}
   we have that,
    \begin{align*}
        \frac{|x_{n,j}-x_{n,m}|}{s_n t_{n,m}} \to +\infty, \qquad \mbox{ for each }\; j=1,2,\cdots, m-1,
    \end{align*}
   and $x_{n,m}$ yields a Type I or a Type II spike, whence according to (B-i), (B-ii), $t_{n,m}>T_0$, and in particular as in \eqref{eq:5.3},
  \begin{align*}
        B_{R s_n t_{n,m}} (x_{n,m}) \cap B_{R_{n,j} s_n t_{n,j}} (x_{n,j})=\emptyset\quad \forall\,j=1,\cdots, m-1.
    \end{align*}
Possibly along a subsequence, we can find $R_{n,m}\to+\infty$ and~$t_{n,m}\in (T_0,+\infty]$ such that
    \begin{align*}
        R_{n,m} s_n t_{n,m}\to 0^+.
    \end{align*}
     Thus the rescaled functions $u_{n,m}$, defined as in \ref{eq:un def}, satisfy,
    \begin{align*}
        -\Delta u_{n,m}(z) = [u_{n,m}(z)]_+^p, \qquad \forall z\in B_{R_{n,m}}(0)
    \end{align*}
    again with uniformly bounded~$(p-1)$ and $(p)$-masses. Therefore $u_{n,m}$ converges in~$C^2_{\rm loc}(\R^2)$ to some
    $u_{\infty,m}$ which is an entire solution of~\eqref{eq:entire eqn} with~$u_{\infty,m}(0)=\phi(0)$, hence~$u_{\infty,m}=w^*$
    and this gives us the~$m$-th spike. Remark that putting,
    putting
    $$
     D_n:=\left\{\bigcup_{j=1}^{m} B_{R_{n,j}s_n t_{n,j}}(x_{n,j})\right\},
     $$
    we have that
\begin{align*}
              \frac{1}{\eps_n^2}\int\limits_{\om} [v_n(x)-1]_+^{p-1}\dd{x}=\frac{1}{\eps_n^2}\int\limits_{D_n} [v_n(x)-1]_+^{p-1}\dd{x}=
              mI_{p-1} + o(1),
            \end{align*}
            implying that, due to \eqref{Hypo on p-1}, the induction argument has to stop after a finite number of steps.
            In particular if $m$ were the total number of spikes of Type I or II, then we would have,
              \begin{align*}
            \#(\Sigma) \leq \frac{ H_{p-1}}{m I_{p-1}}.
        \end{align*}

Let $m$ the total number of spikes of Type I or II, then it is readily seen that (a)-(b)-(c) follow as above with $N_{I}+N_{II}=m$ and $N_I\geq 1$.
We skip the details to avoid repetitions.\\

            At last, observe that, according to (B-i) and (B-ii) in Theorem \ref{thm:ALT near maximum}, the plasma region, that is the subset
            $$
            \Omega^{n,+}\coloneqq\braces{x\in\Omega\mid v_n(x)>1}
            $$
            consists of \un{asymptotically round points} in the sense of Caffarelli--Friedman (\cite{CF}), namely, for any~$0<\theta<1$
    \begin{align*}
        \bigcup_{j=1}^{m} B_{(1-\theta) s_n t_{n,j}}(x_{n,j})
        \Subset \Omega^{n,+}
         \Subset\bigcup_{j=1}^{m} B_{(1+\theta) s_n t_{n,j}}(x_{n,j})
    \end{align*}
    for any $n$ sufficiently large. This fact concludes the proof of (a)-(b)-(c)-(d).\\

The global behavior of the possible spikes are sketched in the Figure~\ref{fig:2}.

\begin{figure}
  \centering
  \includegraphics[width=0.8\textwidth]{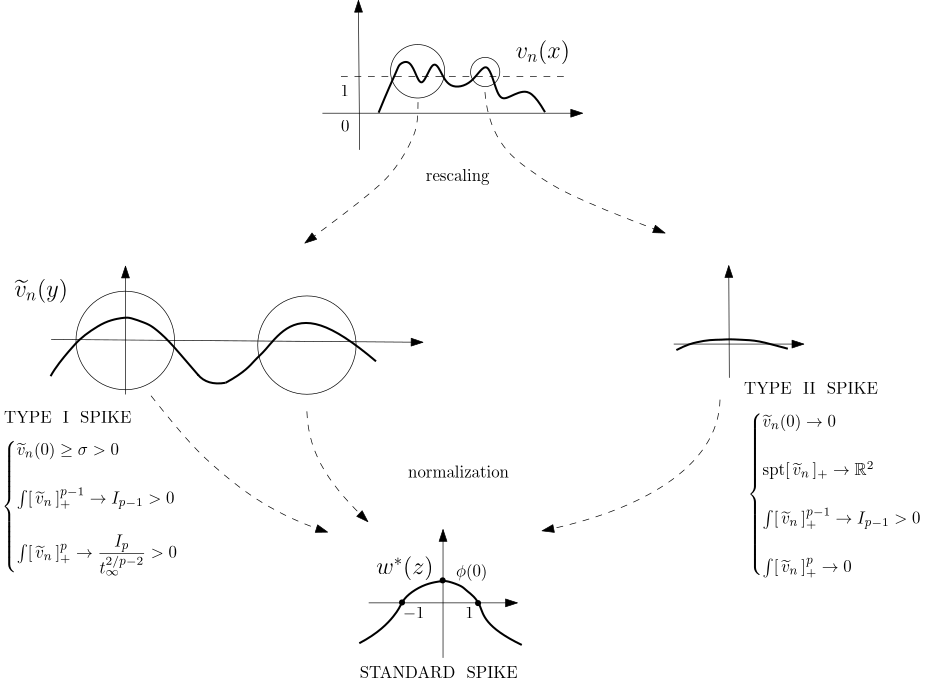}
  \caption{Global behavior of the spikes} \label{fig:2}
\end{figure}

\

% \begin{center}
%  \includegraphics[width=0.95\textwidth]{fig2.png}
% \captionof{figure}{Global behavior of the spikes}
% \end{center}

We are just left with the proof of (e).\\
{\it Proof of} (e).\\
  We recall \eqref{green:1}, which we write as follows,
    \begin{align*}
       \lim_{n\to+\infty} \theta_n v_n(x)
         =&\sum_{i=1}^{N_I} \frac{I_p}{t_{\infty,i}^{\frac{2}{p-1}}} G(x,x^*_{\infty,i}) =
         I_p\sum_{\ell =1}^{m_1} M_{\ell}
         G(x,x^*_{\infty,\ell}) =: I_p\mathcal{G}(x),
    \end{align*}
    where $M_\ell$ was defined in \eqref{p-mass}.
    This convergence is uniform in $\Omega\setminus (\Sigma)_r$, where (see (b)), $v_n(x)$ is of order O$(\frac{1}{\theta_n})$. However it
    is well known (see for example \cite{MaW} or either \cite{BJW}) that in this situation a careful analysis of the Pohozaev identity yields a constraint
    about $(x^*_{\ii,1},\cdots,x^*_{\ii,m_1})$. Indeed, let~$H$ be the regular part of the Green's function:
    \begin{align*}
        G(x',x'')=\frac{1}{2\pi}\ln\frac{1}{|x'-x''|} + H(x',x''), \quad \forall x',x''\in\Omega, x'\neq x'',
    \end{align*}

The functions~$\theta_n v_n\in C^{2,\beta}(\Omega)$ satisfy the equations
        \begin{align*}
            -\Delta (\theta_n v_n)=\frac{1}{s_n^2} [\theta_n v_n -\theta_n]_+^p.
        \end{align*}
     By using $\nabla (\theta_n v_n)$ as test functions in~$\Omega'\Subset \Omega$ (with~$\p\Omega'$ smooth), we obtain the vectorial Pohozaev identity:
        \begin{align*}
            \int\limits_{\p\Omega'} -\p_\nu (\theta_n v_n) \nabla (\theta_n v_n) +\frac{1}{2}|\nabla (\theta_n v_n)|^2 \nu \dd{s}
            =\int\limits_{\p\Omega'} \frac{1}{p+1}\frac{1}{s_n^2} [\theta_n v_n -\theta_n ]_+^{p+1} \nu \dd{s}.
        \end{align*}
        Now let $a_1:=x^{*}_{\ii,1}$ and peak and $r>0$ small enough such that~$\Omega'=B_{2r}(a_1)$ does not contain any other spike point of Type I.
        Passing to the limit~$n\to+\infty$, the right hand side is readily seen to vanish, meanwhile the left hand side will converge to the
        corresponding integral with~$\theta_n v_n$ replaced by~$I_p\mathcal{G}$, whence we have that,
        \begin{align}\label{eq:vector Pohozaev}
            \int\limits_{\p B_r(a_1)} -\p_\nu\mathcal{G} \nabla\mathcal{G}+ \frac{1}{2}|\nabla\mathcal{G}|^2\nu \dd{s}=0.
        \end{align}

      At this point observe that,
        \begin{align*}
            \mathcal{G}(x)
            =&\sum_{i=1,\cdots N_I\;:\; x^*_{\infty, i}=a_1}  \frac{1}{t_{\infty,i}^{\frac{2}{p-1}}} G(x,x_{\infty,i})
             +\sum_{i=1,\cdots N_I\;:\; x^*_{\infty, i}\neq a_1} \frac{1}{t_{\infty,i}^{\frac{2}{p-1}}} G(x,x_{\infty,i})  \\
            =&\sum_{i=1,\cdots N_I\;:\; x^*_{\infty, i}=a_1}
            \left(\frac{1}{2\pi}\frac{1}{t_{\infty,i}^{\frac{2}{p-1}}} \ln\frac{1}{|x-a_1|} + \frac{1}{t_{\infty,i}^{\frac{2}{p-1}}}  H(x,a_1)\right)
             +\sum_{i=1,\cdots N_I\;:\; x^*_{\infty, i}\neq a_1} \frac{1}{t_{\infty,i}^{\frac{2}{p-1}}} G(x,x_{\infty,i}) \\
             =&\frac{M_1}{2\pi} \ln\frac{1}{|x-a_1|}
              + \parenthesis{ M_1  H(x,a_1)
             +\sum_{i=1,\cdots N_I\;:\; x^*_{\infty, i}\neq a_1} \frac{1}{t_{\infty,i}^{\frac{2}{p-1}}} G(x,x_{\infty,i}) } \\
             =&\frac{M_1}{2\pi} \ln\frac{1}{|x-a_1|} + F_1(x)
        \end{align*}
        where
        \begin{align*}
            F_1(x)\equiv M_1 H(x,a_1) + \sum_{i=2}^{m_1} M_i G(x,a_i).
        \end{align*}

         Inserting these expression of $\mathcal{G}(x)$ into~\eqref{eq:vector Pohozaev} and letting~$r\to 0^+$,
         exactly the same computations in either \cite{MaW} (see also \cite{BJW}) show that,
        \begin{align*}
            \nabla F_1(a_1)=0,
        \end{align*}
        which is the same as to say that,
        \begin{align*}
            \nabla_1 \mathcal{H}(x^*_{\infty, 1},x^*_{\infty, 2},\cdots, x^*_{\infty, m_1})=0.
        \end{align*}
        The vanishing of the other derivatives readily follows by a permutation of the variables.
\end{proof}

\bigskip

\section{The Proof of Theorem \ref{thm:last}}\label{sec7}
In this section we prove Theorem \ref{thm:last}.
\begin{proof}[{The Proof of Theorem \ref{thm:last}}]
Putting $v_n=\frac{\lm_n}{|\al_n|}\psi_n$ and $\eps^2_n=(|\al_n|^{p-1}\lm_n)^{-1}$, because of $\lm_n\to +\ii$ and $|\al_n|\geq 1$,
it is readily seen that $\eps_n\to 0$ and $v_n$ is a solution of
\eqref{eq:blow up eqn} where, in view of \eqref{H4},
$$
\int\limits_\Omega \frac{1}{\eps_n^2}[v_n-1]_+^{p-1}\dd{x}= \lm_n\int\limits_\Omega[\al_n+\lm_n\psi_n]_+^{p-1}\leq C_{p-1},
$$
whence \eqref{Hypo on p-1} is satisfied. Because of \eqref{H3} and $\lm_n\to +\ii$, $|\al_n|\geq 1$ we have that
$|\al_n|\to +\ii$ and
\begin{align}\label{part-Hp}
\int\limits_\Omega \frac{\theta_n}{\eps_n^2}[v_n-1]_+^{p}\dd{x} =
\int\limits_\Omega \frac{\lm_n}{|\al_n|}\theta_n[\al_n+\lm_n\psi_n]_+^{p}=\frac{\lm_n}{|\al_n|}\theta_n,
\end{align}
where we recall that
$$
      \parenthesis{\frac{\eps_n}{s_n}}^{\frac{2}{p-1}}\theta_n\equiv  \parenthesis{\frac{\eps_n}{s_n}}^{\frac{2}{p-1}} \phi'(1)\ln(\sqrt{\pi} s_n)=1,
$$
and in particular that $s_n\to 0$ as $\eps_n\to 0$. Elementary arguments show that
$$
s_n^{\frac{2}{p-1}}=(1+o(1))\phi^{'}(1)\eps_n^{\frac{2}{p-1}}\log(\eps_n)
$$
and consequently that
\begin{align}\label{part-theta}
\theta_n=\phi^{'}(1)\ln(\sqrt{\pi} s_n)=(1+o(1))\phi^{'}(1)\log(\eps_n)=\frac{(1+o(1))}{2}|\phi^{'}(1)|\log(|\al_n|^{p-1}\lm_n).
\end{align}
Therefore we deduce from \eqref{part-Hp} and \eqref{H4} that,
$$
\int\limits_\Omega \frac{\theta_n}{\eps_n^2}[v_n-1]_+^{p}\dd{x}=\frac{\lm_n}{|\al_n|}\theta_n=
\frac{(1+o(1))}{2}\frac{\lm_n}{|\al_n|}|\phi^{'}(1)||\log(|\al_n|^{p-1}\lm_n)\graf{\leq |\phi^{'}(1)|C_p,\\ \geq \frac{|\phi^{'}(1)|}{2 C_p}},
$$
implying that \eqref{Hypo on p} and \eqref{eq:NV p} are both satisfied as well. As a consequence all the conclusions of Theorem \ref{thm:ALT many maximum} hold true for $v_n$
and in particular we deduce from \eqref{p-2:nquant} that,
\begin{align}\label{cii}
              \frac{\lm_n}{|\al_n|}\theta_n=\frac{\theta_n}{\eps_n^2}\int\limits_{\om} [v_n(x)-1]_+^{p}\dd{x}\to
              (c_\ii)^{-1}:=\sum\limits_{j\in \{1,\cdots,N_I\}}\frac{I_p}{t_{\ii,j}^{\frac{2}{p-1}}}.
\end{align}

Therefore from \eqref{part-theta} we have,
\begin{align*}
&\frac{|\al_n|}{\lm_n}=\frac{(1+o(1))}{2}|\phi^{'}(1)|\log(|\al_n|^{p-1}\lm_n)(1+o(1))c_\ii=\\
&(1+o(1))c_\ii|\phi^{'}(1)|\frac{p-1}{2}\left(\log(|\al_n|) +\log(\lm_n)\right),
\end{align*}
which readily implies that
$$
|\al_n|=(1+o(1))c_\ii|\phi^{'}(1)|\frac{p-1}{2}\lm_n\log(\lm_n)
$$
and consequently, again by \eqref{part-theta}
$$
\theta_n=(1+o(1))c_\ii|\phi^{'}(1)|\frac{p-1}{2}\log(|\al_n|).
$$
This fact together with \eqref{p-1:quant} concludes the proof of (i). The remaining part of the statement is just a rewriting of (a)-(b)
and (d)-(e) of Theorem \ref{thm:ALT many maximum} in terms of $\psi_n$, where in particular one uses \eqref{cii}.
\end{proof}

\bigskip

\end{document}